
%

\documentclass{amsproc}

\usepackage{latexsym}
\usepackage{amsmath}
\usepackage{amssymb}
\usepackage{amsfonts}
\usepackage{verbatim}
\usepackage[latin1]{inputenc}
\usepackage{mathrsfs}
\usepackage{amsfonts}
\usepackage{graphicx}
\usepackage{colortbl,dcolumn}
\usepackage{amsmath}
\usepackage{psfrag}
\usepackage{booktabs}
\usepackage{lipsum}
\usepackage{amsfonts}
\usepackage{graphicx}
\usepackage{epstopdf}
\usepackage{algorithmic}
\ifpdf
  \DeclareGraphicsExtensions{.eps,.pdf,.png,.jpg}
\else
  \DeclareGraphicsExtensions{.eps}
\fi

\newtheorem{tm}{Theorem}[section]
\newtheorem{rk}{Remark}[section]

\newtheorem{prop}{Proposition}[section]
\newtheorem{lm}{Lemma}[section]
\newtheorem{cor}{Corollary}[section]

\newcommand{\OOO} {\mathscr O}
\newcommand{\E}{\mathbb E}
\newcommand{\PP}{\mathbb P}
\newcommand{\N}{\mathbb N}
\newcommand{\R}{\mathbb R}

\newcommand{\bs}{\mathbf s}

\newcommand{\LL}{\mathcal L}
\newcommand{\OO}{\mathcal O}
\newcommand{\HH}{\mathbb H}

\newcommand{\FFF}{\mathscr F}
\newcommand{\<}{\langle}
\renewcommand{\>}{\rangle}

\ifpdf
  \DeclareGraphicsExtensions{.eps,.pdf,.png,.jpg}
\else
  \DeclareGraphicsExtensions{.eps}
\fi

\begin{document}

\title[Strong Convergence Rate of Splitting approximation   for SACE]
{Strong convergence rates of semi-discrete splitting approximations for stochastic Allen--Cahn equation}

\author{Charles-Edouard  Br\'ehier}
\address{Univ Lyon, CNRS, Universit\'e Claude Bernard Lyon 1, UMR5208, Institut Camille Jordan, F-69622 Villeurbanne, France}
\curraddr{}
\email{brehier@math.univ-lyon1.fr}
\thanks{}
\author{Jianbo Cui}
\address{1. LSEC, ICMSEC, Academy of Mathematics and Systems Science, Chinese Academy of Sciences, Beijing, 100190, China 
2. School of Mathematical Science, University of Chinese Academy of Sciences, Beijing, 100049, China
}
\curraddr{}
\email{jianbocui@lsec.cc.ac.cn(Corresponding author)}
\thanks{}
\author{Jialin Hong}
\address{1. LSEC, ICMSEC, Academy of Mathematics and Systems Science, Chinese Academy of Sciences, Beijing, 100190, China 
2. School of Mathematical Science, University of Chinese Academy of Sciences, Beijing, 100049, China
}
\curraddr{}
\email{hjl@lsec.cc.ac.cn}
\thanks{}


\subjclass[2010]{Primary 60H35; Secondary 60H15, 65M15}

\keywords{Stochastic Allen-Cahn equation, 
splitting scheme, 
 strong convergence rate,
 exponential integrability.}

\date{\today}

\dedicatory{}

\begin{abstract}
This article analyzes an explicit temporal splitting  numerical scheme for the stochastic Allen-Cahn equation
driven by additive noise, in a bounded spatial domain with smooth boundary in dimension $d\le 3$. The splitting strategy is combined with an exponential Euler scheme of an auxiliary problem.

When $d=1$ and the driving noise is a space-time white noise,
we first show some a priori estimates of this splitting scheme. Using the monotonicity of the drift nonlinearity, we then prove that 
under very mild assumptions on the initial data,
this scheme achieves
the optimal strong convergence rate $\OO(\delta t^{\frac 14})$. 
When $d\le 3$ and the driving noise possesses some regularity in space, we  study exponential integrability properties of the exact  and numerical solutions. Finally, in dimension $d=1$, these properties are used to prove that the splitting scheme has a strong convergence rate $\OO(\delta t)$.
\end{abstract}

\maketitle


\section{Introduction}

The stochastic Allen-Cahn equation driven by an additional noise term models the effect of thermal perturbations, and plays an important role in the phase
theory and the simulations of rare events in infinite dimensional stochastic systems (see e.g. \cite{Fun95, KORV07,VW12}). 
 
In this article, 
we mainly focus on deriving the optimal strong convergence rates of  temporal splitting schemes for  
the stochastic Allen-Cahn equation driven by 
Wiener processes, including the cylindrical Wiener process and some more regular Wiener processes,
under homogenous Dirichlet boundary conditions:  
\begin{align}\label{ac}
dX(t)=AX(t)+F(X(t))dt+dW^Q(t),\; t\in (0,T],\;\; X(0)=X_0,
\end{align}
where $F(x)=x-x^3$,
$(W^Q(t))_{t\in [0,T]}$ is a generalized  Wiener process  on a filtered probability space $(\Omega, \FFF, (\FFF(t))_{t\in[0,T]},\PP)$ and $\OO\in \R^d, d\le 3$ is a bounded spatial domain with smooth boundary $\partial \OO$.

Strong convergence of numerical approximations for Stochastic Partial Differential 
Equations(SPDEs) with  globally Lipschitz continuous coefficients has been extensively studied in the 
last twenty years (see e.g. \cite{ACLW16,
Gyo98,JKW11,Kru14a,Pri02}).
For SPDEs with non-Lipschitz coefficients, 
there only exist a few results about the strong convergence rates of numerical schemes (see e.g. \cite{ BGJK17,BJ16, CHL16b, CHLZ17,FLZ17}). The strong convergence rates of numerical schemes, especially the temporal discretization, is far from being understood and it is still an open problem to derive  general strong convergence rates of numerical schemes for SPDEs with non-globally Lipschitz  coefficients.

For the discretization of equations such as the stochastic Allen-Cahn equation, the main difficulty is the polynomial growth of the non-globally Lipschitz continuous coefficient $F$. It is very delicate and necessary to design efficient numerical schemes for stochastic equations with this type of nonlinearities.  The authors in~\cite{KLL18} 
study a Euler type split-step scheme and the backward Euler scheme, and show 
that these schemes converge strongly with a rate 
$\OOO(\delta t^{\frac 12})$ for Eq. \eqref{ac} with $d\le 3$, driven by some $Q$-Wiener processes. 
 We refer to~\cite{FLZ17} for the analysis of finite element methods applied to stochastic Allen-Cahn equations with multiplicative noise.
For Eq.~\eqref{ac}, with $d= 1$ driven by a space-time white noise, first, the authors in~\cite{BJ16} obtain the strong 
convergence rate results for a nonlinearity-truncated 
Euler-type scheme. Similar strong convergence results are then obtained in~\cite{BGJK17} for a nonlinearity-truncated fully discrete scheme.
A backward Euler-Spectral Galerkin method has been considered in~\cite{LQ18} by using stochastic calculus in martingale type 2 Banach spaces. The authors  in \cite{BG18} propose some splitting schemes and prove the proposed schemes are strongly convergent without strong convergence rates.

In this work, we give a systemic analysis of the properties of a splitting scheme and its strong convergence rates for approximating  Eq. \eqref{ac} with $d\le 3$ driven by different kinds of noise.
We first introduce the  splitting scheme
with a time-step size $\delta t>0$, defined by:
\begin{align}\label{spl}
Y_n&=\Phi_{\delta t}(X_n),\\\nonumber
X_{n+1}&=S_{\delta t}Y_n+\int_{t_n}^{t_{n+1}}S(t_{n+1}-s)dW^Q(s),
\end{align}
where  $\Phi_{\delta t}(z)=\frac z{\sqrt{z^2+(1-z^2)e^{-2\delta t}}}$ is the phase flow of 
$dX=F(X(t))dt, t\in [0,\delta t], X_0=z$, and
$S_{\delta t}=S(\delta t)=e^{A\delta t}$.
This type of splitting scheme, in a stochastic context, has been first proposed in~\cite{BG18}, and it is convenient for  practical implementations since it is explicit and strongly convergent without a taming or truncation strategy.  Note that an exponential Euler scheme is used in the second step of the splitting strategy.

In this article, we first derive the optimal strong convergence rate of the splitting scheme in the case of space-white time noise, using a variational approach.
This gives a positive answer to the question asked in~\cite{BG18}, concerning the strong convergence rate of splitting schemes for the stochastic Allen-Cahn equation.
We would like to mention that these splitting-up based methods have many applications on approximating SPDEs with
the Lipschitz nonlinearity, and are also used for approximating  SPDEs with non-Lipschitz or non-monotone nonlinearities (see e.g. \cite{CH17,CHLZ17,Dor12, GK03}).

In order to analyze the strong convergence rate of this splitting method for different types of noise, different approaches  are required. In the case of space-time white noise, there are three main steps to derive the strong convergence rate. Following~\cite{BG18}, the first step is constructing an auxiliary problem, with a modified nonlinearity $\Psi_{\delta t}$ instead of $F$,
 such that the splitting scheme can be viewed as   a standard exponential Euler scheme applied to the auxiliary problem.
Even though the exponential Euler  applied to the original equation may be divergent, the  solutions of the numerical scheme and of the auxiliary problem are proved to be bounded in $L^p(\Omega; L^q)$, for all finite $p,q$.  Thus no taming or truncation strategy is required to ensure the  boundedness of numerical solutions.
The second step is based on the monotonicity properties of the nonlinearities $F$ and $\Psi_{\delta t}$, appearing in the exact and
auxiliary problems respectively. In addition, since the noise is additive and an exponential Euler scheme is used with no discretization of the stochastic convolution, one is lead to study some PDEs with random coefficients. 
The last step  consists in applying  properties of the stochastic convolution and stochastic calculus results in martingale  
type 2 Banach spaces, to deduce the optimal strong convergence rate $\OOO(\delta t^{\frac 14})$ in $L^p(\Omega; C(0,T; L^q))$, $p\ge q=2m$, $m\in \N^+$, i.e.,
\begin{align*}
\Big\|\sup_{t\in [0,T]}\|X^N(t)-X(t)\|_{L^{q}}\Big\|_{L^p(\Omega)}
\le C(X_0,T,p,q)\delta t^{\frac 14}.
\end{align*}
This variational approach can also be used to obtain the strong convergence rate $\OOO(\delta t^{\frac 12})$,
in the case of more regular $Q$-Wiener processes,  in dimension $d\le 3$.

In the case of $\HH^1$-valued $Q$-Wiener processes, in dimension $d=1$, we get higher strong convergence rates of this splitting method,  thanks to exponential integrability properties of the exact and numerical solutions.
To the best of our knowledge,this is the first result with strong convergence order $1$ about the temporal numerical  schemes approximating the stochastic Allen--Cahn equation.
For similar approaches to derive the strong convergence rates of numerical schemes, we refer to 
\cite{CH17, HJW13, JP16} and the references therein.
We first study stability and exponential integrability properties of the exact solution, in dimensions $d\le 3$, and obtain results of their own interest beyond analysis of numerical schemes. Then,  in dimension $d=1$, a new auxiliary processes $Z^N$ is constructed, and some a priori estimate and exponential integrability properties of $Z^N$ are studied. 
We then  prove that the scheme, in this context, has strong convergence order equal to $1$:
\begin{align*}
\sup_{n\le N}\Big\|\|X^N(t_n)-X(t_n)\|_{L^{q}}\Big\|_{L^p(\Omega)}
\le C(X_0,Q,T,p,q)\delta t.
\end{align*}
This strong convergence result is restricted to dimension $d=1$, due to a loss of exponential integrability of the auxiliary process $Z^N$ in higher dimension. Further study is required to overcome this issue.

This article is organized as follows. Some preliminaries are given in Section \ref{sec-2}.
The variational approach to deal with the case of space-time white noise  and  $Q$-Wiener processes,
as well as some properties of the auxiliary problem,
and one main strong convergence rate result, are 
given in Section \ref{sec-3}.
In Section \ref{sec-4},  stability and exponential integrability properties, of the exact solution and of a new auxiliary processes, are studied. Finally, we establish the optimal strong convergence rate of this proposed scheme in dimension 1.

We use $C$ to denote a generic constant, independent of the time step size $\delta t$, which differs from one place to another.

\section{Preliminaries}\label{sec-2}

In this section, we first introduce some useful notations
and further assumptions. Let $T>0$, $\delta t$ is the time step size, $N$ is the positive integer such that $N \delta t=T$, and let $\{t_k\}_{k\le N}$ be the grid points, defined by $t_k=k\delta t$.
We denote by 
$\HH=L^2(\OO)$, $L^q=L^q(\OO)$, $1\le q<\infty$ and 
$\mathcal E=\mathcal C(\OO)$.
$A$ is the Dirichlet Laplacian operator, which generates 
an analytic and contraction $C_0$-semigroup $S(t), t\ge 0$ in $\HH$ and $L^q$.
It is well-known that the assumptions on $\OO$ implies that the existence of the eigensystem $\{\lambda_k, e_k\}_{k\in\N^+}$ of 
$\HH$, such that $\lambda_k>0$, $-Ae_k=\lambda_k e_k$ and $\lim\limits_{k\to \infty}\lambda_k=\infty$.
Let
 $\mathbb W^{r,q}$ is the Banach space equipped with the norm
 $\|\cdot \|_{\mathbb W^{r,q}}:= \| (-A)^{\frac {r}2}\cdot \|_{L^q}$ 
for  the fractional power $(-A)^{\frac{r}{2}}, r\ge 0$.
The identities $\HH^1=H^1_0$ and $\HH^2=H^1_0\cap H^2$ are frequently  used in Section \ref{sec-4}.

Given two separable Hilbert spaces $\mathcal H$ and $ \widetilde  H$, we denote by $\LL^0_2(\mathcal H,\widetilde H)$ the space 
of Hilbert-Schmidt operators from $\mathcal H$ into $\widetilde H$, equipped with the usual norm given by  $\|Q\|_{\LL^0_2(\mathcal H,\widetilde H)}=(\sum_{k\in \N^+}\|Qe_k\|^2_{\widetilde H})^{\frac{1}{2}}$, where $\N^+=\{1,2,\cdots\}$, and  the result does not depend on the orthonormal basis $\{e_k\}_{k\in\N^+}$ of $\mathcal H$. We denote by $\LL^{\bs}_2:=\LL^0_2(\HH,\HH^{\bs})$, for $\bs \in \N$.

Given a Banach space $E$, we denote by $R(\widetilde H, E)$ the space of $\gamma$-radonifying operators endowed with the norm defined by
$\|Q\|_{\gamma(\widetilde H, E)}=(\widetilde \E\|\sum_{k\in\N^+}\gamma_k Qe_k \|^2_{E})^{\frac 12}$,
where $(\gamma_k)_{k\in\N^+}$ is a sequence of independent $\mathcal N(0,1)$-random variables on a
probability space $(\widetilde \Omega,\widetilde \FFF, \widetilde \PP)$.
We also need the Burkerholder inequality in martingale-type 2 Banach spaces $E=L^q, q\in[2,\infty)$, (see e.g. \cite{Brz97,VVW08}): for some $C_{p,E}\in(0,\infty)$,
\begin{equation}\label{Burk}
\begin{aligned}
\Big\|\sup_{t\in [0,T]}\Big\|\int_0^t \phi(r)dW(r)\Big\|_{E}\Big\|_{L^p(\Omega)}
&\le 
C_{p,E}\|\phi\|_{\LL^p(\Omega;L^2([0,T]; \gamma(\widetilde H;E)))}\\
&=
C_{p,E}\Big(\E\Big(\int_0^T\|\phi(t)\|^2_{\gamma(\widetilde H;E)}dt\Big)^{\frac p2}\Big)^{\frac 1p}
\end{aligned}
\end{equation}
 and the following property (see \cite{VVW08}): for some $C_q\in(0,\infty)$,
 \begin{align}\label{Rad}
 \|\phi \|_{\gamma(\widetilde H, L^q)}^2
\le C_q
 \Big\|\sum_{k\in \N^+} (\phi e_k)^2\Big\|_{L^{\frac q2}},
 \quad \phi \in \gamma(\widetilde H, L^q). 
 \end{align}
The process $W:=\sum_{k\in\N^+} \beta_ke_k$ is the $\HH$-valued cylindrical Wiener process, where $(\beta_k)_{k\in\N^+}$ are independent Brownian motions defined on a filtered probability space $(\Omega, \FFF, (\FFF(t))_{t\in[0,T]},\PP)$. The driving noise is $W^Q:=\sum_{k}\beta_kQe_k$, where 
$Q$ is a bounded operator from $\HH$ to $E$. 
When $Q=I$, $E=\HH$, $W^Q$ is the standard cylindrical Wiener process, which corresponds to the case
of space-time white noise.  In Sections~\ref{sec-3} and~\ref{sec-4},  we will also consider more regular cases, with assumptions   $Q \in \LL_2^\bs$, $\bs \in \N$.

 The solution of the stochastic Allen-Cahn equation, Eq.~\eqref{ac}, is interpreted in a mild sense,
\begin{align}\label{exa}
X(t)=S(t)X_0+\int_{0}^tS(t-s)F(X(s))ds+\int_{0}^tS(t-s)dW^Q(s).
\end{align}
 Let $\omega(t)= \int_{0}^tS(t-s)dW^Q(s)$ be the so-called stochastic convolution. Then note that $Y(t)=X(t)-\omega(t)$ solves a random PDE (written in mild form):
\begin{align}\label{exaY}
Y(t)=S(t)X_0+\int_{0}^tS(t-s)F(Y(s)+\omega(s))ds.
\end{align}
We now introduce an auxiliary problem, and several auxiliary processes. The auxiliary problem is coming from writing the solution of the splitting scheme Eq.~\eqref{spl} as follows:
\begin{align*}
X_{n+1}&=S_{\delta t}\Phi_{\delta t}(X_n)+\int_{t_n}^{t_{n+1}}S(t_{n+1}-s)dW^Q(s)\\
&=S_{\delta t}X_n+\delta t S_{\delta t} \Psi_{\delta t}(X_n)
+\int_{t_n}^{t_{n+1}}S(t_{n+1}-s)dW^Q(s),
\end{align*}
where $\Psi_{\delta t}(z)=\frac {\Phi_{\delta t}(z)- z}{\delta t}, \Psi_0(z)=F(z)$, $\Phi_0(z)=z$.
Thus we get  for all $n\in\left\{0,\ldots,N\right\}$
\begin{align*}
X_{n}&=S({t_{n}})X_0+
\delta t\sum_{k=0}^{n-1} S({t_{n+1}-t_k})\Psi_{\delta t}(X_k)+\sum_{k=0}^{n-1}\int_{t_k}^{t_{k+1}}S(t_{n+1}-s)dW^Q(s).
\end{align*}
A continuous time interpolation, such that $X^N(t_n)=X_n$  for all $n\in\left\{0,\ldots,N\right\}$, is defined by 
\begin{align}\label{mil-spl}
X^N(t)
=S(t)X_0+\int_{0}^{t}S((t-\lfloor s\rfloor_{\delta t}))
\Psi_{\delta t}(X^N(\lfloor s\rfloor_{\delta t}))ds
+\int_{0}^tS(t-s)dW^Q(s),
\end{align}
where $\lfloor s\rfloor_{\delta t}=\max\{0,\delta t,2\delta t,\cdots\}\cap [0, s]$.

As observed in \cite{BG18}, the proposed splitting scheme can be viewed as  the exponential Euler method applied to the following auxiliary SPDE:
\begin{align}\label{aux}
dX^{\delta t}(t)=AX^{\delta t}(t)dt+\Psi_{\delta t}(X^{\delta t}(t))dt+dW^Q(t), \quad X^{\delta t}(0)=X_0. 
\end{align}
 The associated mild formulation is given by
\begin{align*}
X^{\delta t}(t)=S(t)X_0+\int_{0}^tS(t-s)\Psi_{\delta t}(X^{\delta t}(s))ds+
\int_{0}^tS(t-s)dW^Q(s).
\end{align*}
Let $Y^{\delta t}(t)=X^{\delta t}(t)-\omega(t)$, where $\omega$ is the stochastic convolution. Then $Y^{\delta t}$ is also solution of a random PDE:
\begin{align}\label{auxY}
Y^{\delta t}(t)=S(t)X_0+\int_{0}^tS(t-s)\Psi_{\delta t}(Y^{\delta t}(s)+\omega(s))ds.
\end{align}

We quote 
the following results from \cite{BG18}.
The estimates may be derived with elementary calculations. 

\begin{lm}\label{pre}
For every $\delta t_0\in (0,1)$ and $\delta t\in [0,\delta t_0)$, the mapping $\Phi_{\delta t}$ is globally Lipschitz
continuous, and the mapping $\Psi_{\delta t}$ is 
locally Lipschitz 
continuous and satisfies a one-side
Lipschitz  condition. More precisely, for $q=2m$, $m\in 
\N^+$,
\begin{align*}
|\Phi_{\delta t}(z_1)-\Phi_{\delta t}(z_2)|
&\le e^{C\delta t_0}|z_1-z_2|,\\
(\Psi_{\delta t}(z_1)-\Psi_{\delta t}(z_2))(z_1-z_2)^{q-1}
&\le e^{C\delta t_0}|z_1-z_2|^q,\\
|\Psi_{\delta t}(z_1)-\Psi_{\delta t}(z_2)|
&\le C(\delta t_0)|z_1-z_2|(1+|z_1|^2+|z_2|^2),\\
|\Psi_{\delta t}(z_1)-\Psi_{0}(z_1)|
&\le C(\delta t_0)\delta t(1+|z_1|^5).
\end{align*}
\end{lm}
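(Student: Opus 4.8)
The plan is to exploit the fact, already recorded in the statement, that $\Phi_{\delta t}$ is the time-$\delta t$ flow of the scalar ODE $\dot X = F(X)$, $F(x) = x - x^3$. Write $\Phi_s(z)$ for the value at time $s$ of the solution with $\Phi_0(z) = z$, so that $\partial_s \Phi_s(z) = F(\Phi_s(z))$ and, crucially,
\begin{equation*}
\Psi_{\delta t}(z) = \frac{\Phi_{\delta t}(z) - z}{\delta t} = \frac{1}{\delta t}\int_0^{\delta t} F(\Phi_s(z))\,ds .
\end{equation*}
Two elementary a priori properties of this flow drive everything. First, \emph{monotonicity}: since $\partial_z \Phi_s(z) = \exp(\int_0^s F'(\Phi_r(z))\,dr) > 0$, the flow preserves the ordering of initial data. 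Second, an \emph{invariant-region bound}: because $\pm 1$ are stable equilibria and $F(x)x < 0$ for $|x| > 1$, one has $|\Phi_s(z)| \le \max(1, |z|)$ for all $s \ge 0$; this is also immediate from the explicit formula, since the denominator $z^2 + (1 - z^2)e^{-2s}$ is bounded below by $\min(z^2, 1)$. I would record these two facts first.

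For the first two inequalities I would argue by differential inequalities. Setting $D(s) = \Phi_s(z_1) - \Phi_s(z_2)$ and using the factorization $F(a) - F(b) = (a - b)(1 - (a^2 + ab + b^2))$ together with $a^2 + ab + b^2 \ge 0$ and $F'(x) = 1 - 3x^2 \le 1$, one gets $\frac{d}{ds}\tfrac{1}{2}D^2 \le D^2$, hence $|D(\delta t)| \le e^{\delta t}|D(0)|$, which is the claimed Lipschitz bound for $\Phi_{\delta t}$ with $C = 1$. For the one-sided condition with $q = 2m$, I would test against $(z_1 - z_2)^{q-1}$: by monotonicity $D(s)$ has the same sign as $z_1 - z_2$, so $g(s) := D(s)(z_1 - z_2)^{q-1} = |D(s)|\,|z_1 - z_2|^{q-1} \ge 0$, and the same factorization gives $g'(s) = (1 - (a^2 + ab + b^2))\,g(s) \le g(s)$. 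Integrating yields $g(\delta t) \le e^{\delta t}|z_1 - z_2|^q$, and after subtracting $g(0) = |z_1 - z_2|^q$ and dividing by $\delta t$ one obtains $(\Psi_{\delta t}(z_1) - \Psi_{\delta t}(z_2))(z_1 - z_2)^{q-1} \le \frac{e^{\delta t} - 1}{\delta t}|z_1 - z_2|^q \le e^{\delta t_0}|z_1 - z_2|^q$.

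For the last two inequalities I would use the integral representation of $\Psi_{\delta t}$ together with the invariant-region bound. The local Lipschitz estimate follows from $|F(a) - F(b)| \le C|a - b|(1 + a^2 + b^2)$, applied with $a = \Phi_s(z_1)$, $b = \Phi_s(z_2)$, bounding $|a - b| \le e^{\delta t_0}|z_1 - z_2|$ by the first inequality and $a^2 + b^2 \le 2 + z_1^2 + z_2^2$ by the invariant region, then averaging over $s \in [0, \delta t]$. For the consistency bound on $|\Psi_{\delta t}(z) - F(z)|$, I would write $F(\Phi_s(z)) - F(z) = \int_0^s (F'F)(\Phi_r(z))\,dr$ and compute $F'(x)F(x) = x - 4x^3 + 3x^5$, so that $|(F'F)(\Phi_r(z))| \le C(1 + |\Phi_r(z)|^5) \le C(1 + |z|^5)$ by the invariant-region bound; this gives $|F(\Phi_s(z)) - F(z)| \le Cs(1 + |z|^5)$, and averaging over $[0, \delta t]$ yields the bound $C\delta t(1 + |z|^5)$.

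The computations are all elementary; the only genuinely delicate point is the one-sided Lipschitz inequality, where one must respect the sign of the odd power $(z_1 - z_2)^{q-1}$. This is exactly where flow monotonicity is essential, since it guarantees $g(s) \ge 0$ so that the scalar differential inequality $g' \le g$ closes; without it the sign bookkeeping obstructs a clean Gronwall argument. An alternative, more computational route would differentiate the explicit formula for $\Phi_{\delta t}$ directly, but the flow representation makes the role of the one-sided Lipschitz constant $1$ of $F$ and of the invariant region $[-\max(1,|z|), \max(1,|z|)]$ transparent, and I expect it to produce the stated constants $e^{C\delta t_0}$ and $C(\delta t_0)$ with minimal effort.
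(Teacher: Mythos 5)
Your proof is correct. Note that the paper itself offers no proof of Lemma \ref{pre}: it quotes the statement from the reference [BG18] with only the remark that ``the estimates may be derived with elementary calculations,'' so there is no in-paper argument to compare against; your flow-based derivation supplies exactly the kind of elementary computation the authors allude to. All four steps check out: the identity $F(a)-F(b)=(a-b)\bigl(1-(a^2+ab+b^2)\bigr)$ with $a^2+ab+b^2\ge 0$ gives the Lipschitz bound for $\Phi_{\delta t}$ and the one-sided bound with constant $\frac{e^{\delta t}-1}{\delta t}\le e^{\delta t_0}$; the invariant-region bound $|\Phi_s(z)|\le\max(1,|z|)$ (verifiable directly from the explicit denominator) yields the local Lipschitz estimate; and $F'F(x)=x-4x^3+3x^5$ gives the $\mathcal O(\delta t(1+|z|^5))$ consistency bound. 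One small simplification: the flow-monotonicity observation is not actually needed for the one-sided estimate, since $g(s)=D(s)(z_1-z_2)^{q-1}$ solves the linear scalar equation $g'=(1-(a^2+ab+b^2))g$ with $g(0)=|z_1-z_2|^q\ge 0$ ($q$ even), so $g(s)=g(0)\exp\bigl(\int_0^s(1-(a^2+ab+b^2))\,dr\bigr)$ is automatically nonnegative and bounded by $e^s g(0)$; the sign bookkeeping takes care of itself.
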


\section{Strong convergence rate analysis of the splitting scheme approximating stochastic Allen-Cahn equation  by a variational approach}\label{sec-3}

 This section is devoted to the application of a variational approach, to derive strong convergence rates for the splitting scheme defined by Eq.~\eqref{spl}. The study includes the cases of the cylindrical Wiener process ($Q=I$, $q=1$) and of $L^q$-valued  $Q$-Wiener processes ($Q\in \gamma(\HH,L^q)$).

 We recall that in~\cite{BG18} it is proved that the scheme is convergent, when $d=1$ and $Q=I$. Precisely, assume that $X_0 \in \HH^{\beta_1}\cap \mathcal E$, for some $\beta_1>0$. Then
\begin{align*}
\lim_{\delta t \to 0}\E\Big[\sup_{n\le N}\|X^N(t_n)-X(t_n)\|^p\Big]=0.
\end{align*}
However, it is well-known that the standard approach used to derive strong rates of convergence using a Gronwall's inequality argument, cannot be applied, when the nonlinearity is not globally Lipschitz continuous. Additional properties, precisely giving exponential integrability for the exact and numerical solutions, are required.

Instead, in the present section, we overcome this issue using a variational approach, based on a different decomposition of the error introduced below.

 For convenience, throughout this article, we assume that $X_0$ is a deterministic function and that 
$\sup_{k\in \N^+}\|e_k\|_{\mathcal E} \le C$. The typical example to ensures that  $\sup_{k\in \N^+}\|e_k\|_{\mathcal E} \le C$   is the d dimensional cube $[0,1]^d$.

\subsection{A priori estimates and spatial regularity properties} 
We first deal with the case $Q=I$, $d=1$ and  recall the following well-known result about the stochastic convolution
(see e.g. \cite{DZ14}): for $2 \le q<\infty$,
\begin{align*}
\E\Big[\sup_{t\in[0,T]}\|\omega(t)\|_{L^q}^p\Big]\le C_p(T), \quad \E\Big[\sup_{t\in[0,T]}\|\omega(t)\|_{\mathcal E}^p\Big]\le C_p(T)<\infty.
\end{align*}

 The following lemma states standard a priori estimates for the processes $X$, $X^N$ and $Y^{\delta t}$ defined by Eq.~\eqref{ac},~\eqref{mil-spl} and~\eqref{auxY} respectively. For convenience, throughout this paper, we omit the mollification procedure to get the evolution of $\|\cdot\|_{L^q}$.

\begin{lm}\label{pri}
Let $d=1$, $Q=I$, $q=2m$, $m\in\N^+$, $p\ge 1$ and  $X_0\in L^q$. Then $X$,$Y^{\delta t}$ and $X^N$
satisfy 
\begin{align*}
\E\Big[\sup_{t\in [0,T]} \|X(t)\|_{L^q}^{p}\Big]<C(T,p,q)
(1+\|X_0\|_{L^q}^{p}),
\end{align*}
and
\begin{align*}
\E\Big[\sup_{t\in [0,T]} \|Y^{\delta t}(t)\|_{L^q}^{p}
\Big]+
\E\Big[\sup_{t\in [0,T]} \|X^{N}(t)\|_{L^q}^{p}
\Big]<C(T,p,q) (1+\|X_0\|_{L^q}^{p}).
\end{align*}
\end{lm}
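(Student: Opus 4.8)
The plan is to derive pathwise (for fixed $\omega$) energy estimates in $L^q$ for the ``shifted'' processes $Y=X-\omega$ and $Y^{\delta t}=X^{\delta t}-\omega$, which by \eqref{exaY} and \eqref{auxY} solve random PDEs with no stochastic integral, and only afterwards to take expectations using the moment bounds on the stochastic convolution $\omega$. The whole point of subtracting $\omega$ is that it removes the It\^o term and leaves a deterministic dissipative structure to which monotonicity arguments apply. Throughout, the manipulations of $\frac{d}{dt}\|\cdot\|_{L^q}^q$ are understood via the mollification/Galerkin procedure the paper agrees to omit, and the key trick is to feed the one-sided Lipschitz estimates of Lemma \ref{pre} by shifting the argument of the nonlinearity by $\omega$.

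First I would treat $Y^{\delta t}$. Testing \eqref{auxY} against $|Y^{\delta t}|^{q-2}Y^{\delta t}$ gives
\begin{equation*}
\frac1q\frac{d}{dt}\|Y^{\delta t}\|_{L^q}^q = \langle AY^{\delta t},|Y^{\delta t}|^{q-2}Y^{\delta t}\rangle + \int_{\OO}\Psi_{\delta t}(Y^{\delta t}+\omega)\,|Y^{\delta t}|^{q-2}Y^{\delta t}\,dx.
\end{equation*}
The first term is nonpositive since $A$ generates a contraction semigroup on $L^q$ (equivalently, $L^q$-dissipativity). For the nonlinear term I split $\Psi_{\delta t}(Y^{\delta t}+\omega)=[\Psi_{\delta t}(Y^{\delta t}+\omega)-\Psi_{\delta t}(\omega)]+\Psi_{\delta t}(\omega)$. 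Because $q=2m$ is even, $(z_1-z_2)^{q-1}=|z_1-z_2|^{q-2}(z_1-z_2)$, so the one-sided Lipschitz bound of Lemma \ref{pre} applied with $z_1=Y^{\delta t}+\omega$, $z_2=\omega$ controls the first bracket by $e^{C\delta t_0}\|Y^{\delta t}\|_{L^q}^q$. The remainder is handled by Young's inequality, $\int_{\OO}|\Psi_{\delta t}(\omega)|\,|Y^{\delta t}|^{q-1}\,dx\le\varepsilon\|Y^{\delta t}\|_{L^q}^q+C_\varepsilon\|\Psi_{\delta t}(\omega)\|_{L^q}^q$, combined with the growth bound $|\Psi_{\delta t}(z)|\le C(1+|z|^5)$ (which follows from the last inequality of Lemma \ref{pre} and $|F(z)|\le C(1+|z|^3)$), yielding $\|\Psi_{\delta t}(\omega)\|_{L^q}^q\le C(1+\|\omega\|_{L^{5q}}^{5q})$. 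A Gronwall argument then gives, pathwise,
\begin{equation*}
\sup_{t\in[0,T]}\|Y^{\delta t}(t)\|_{L^q}^q \le e^{CT}\Big(\|X_0\|_{L^q}^q + C\int_0^T\big(1+\|\omega(s)\|_{L^{5q}}^{5q}\big)\,ds\Big),
\end{equation*}
with a constant independent of $\delta t$ because $\delta t<\delta t_0<1$ keeps every constant of Lemma \ref{pre} bounded. Taking the $(p/q)$-th moment and invoking the stochastic convolution estimate (which supplies all moments of $\sup_t\|\omega(t)\|_{L^{5q}}$) yields the bound for $Y^{\delta t}$. The process $X$ is identical via \eqref{exaY}, now using the one-sided Lipschitz property of $F$ (the $\delta t=0$ case of Lemma \ref{pre}); then $X=Y+\omega$ and the triangle inequality give the stated bound on $\E[\sup_t\|X(t)\|_{L^q}^p]$.

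It remains to bound the numerical interpolation $X^N$ of \eqref{mil-spl}, where I pass to the discrete level. Writing $Y_n=X_n-\omega(t_n)$ and using $\int_{t_n}^{t_{n+1}}S(t_{n+1}-s)\,dW^Q(s)=\omega(t_{n+1})-S_{\delta t}\omega(t_n)$, the scheme \eqref{spl} collapses to $Y_{n+1}=S_{\delta t}\big[\Phi_{\delta t}(Y_n+\omega(t_n))-\omega(t_n)\big]$. Since $S_{\delta t}$ is an $L^q$-contraction and the time-$\delta t$ flow map $\Phi_{\delta t}$ has sharp Lipschitz constant $e^{C\delta t}$ (the sharp form of the first bound in Lemma \ref{pre}, with the per-step factor, not the uniform surrogate $e^{C\delta t_0}$), one gets $\|Y_{n+1}\|_{L^q}\le e^{C\delta t}\|Y_n\|_{L^q}+\delta t\,\|\Psi_{\delta t}(\omega(t_n))\|_{L^q}$, and the growth bound controls the last term by $C\delta t(1+\sup_t\|\omega(t)\|_{L^{5q}}^{5})$. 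Discrete Gronwall then produces $\sup_{n\le N}\|Y_n\|_{L^q}\le C(T)(\|X_0\|_{L^q}+1+\sup_t\|\omega(t)\|_{L^{5q}}^{5})$, uniformly in $\delta t$. Finally, on each subinterval $t\in[t_n,t_{n+1}]$ the interpolation reads $X^N(t)=S(t-t_n)X_n+(t-t_n)S(t-t_n)\Psi_{\delta t}(X_n)+\int_{t_n}^t S(t-s)\,dW^Q(s)$, whence
\begin{equation*}
\sup_{t\in[0,T]}\|X^N(t)\|_{L^q}\le \sup_{n\le N}\|X_n\|_{L^q}+\delta t_0\,C\big(1+\sup_{n\le N}\|X_n\|_{L^{5q}}^{5}\big)+2\sup_{t\in[0,T]}\|\omega(t)\|_{L^q},
\end{equation*}
the higher-norm discrete bound being obtained by rerunning the same discrete estimate with $5q$ in place of $q$ (legitimate since $5q$ is even). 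Taking moments and using the stochastic convolution estimates closes the proof.

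The step I expect to be the main obstacle is the uniform-in-$\delta t$ control of the nonlinear term. Naively, the cubic growth of $F$ (and the quintic growth lurking in $\Psi_{\delta t}-F$) produces a term of order $\|Y^{\delta t}\|_{L^{q+2}}^{q+2}$ or worse that no dissipation can absorb. The resolution is exactly the monotonicity structure of Lemma \ref{pre}, exploited by shifting the argument by $\omega$ so that the dangerous top-order contribution acquires a favorable sign and only integrable powers of $\omega$ survive; one must also verify that every constant from Lemma \ref{pre} stays bounded as $\delta t\to0$, and that the discrete Gronwall constant is the harmless $e^{CT}$ rather than a blow-up of the form $e^{C\delta t_0 N}$, which is precisely why the per-step flow Lipschitz constant $e^{C\delta t}$ (and not its uniform majorant) must be used in the discrete recursion.
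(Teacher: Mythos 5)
Your proposal is correct and follows essentially the same route as the paper: subtract the stochastic convolution to obtain a pathwise random PDE, exploit the one-sided Lipschitz structure of $\Psi_{\delta t}$ (shifted by $\omega$) together with Young's inequality and Gronwall for $Y^{\delta t}$, and for $X^N$ reduce to the grid points and combine the $L^q$-contractivity of $S_{\delta t}$ with the per-step Lipschitz constant $e^{C\delta t}$ of $\Phi_{\delta t}$ and a discrete Gronwall argument. The only deviations are cosmetic: you invoke a quintic growth bound on $\Psi_{\delta t}$ (hence $L^{5q}$ moments of $\omega$) where the paper uses the cubic bound from Lemma \ref{pre} (hence $L^{3q}$), and you prove the estimate for $X$ directly rather than citing the literature; both are harmless.
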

\begin{proof}
For the a priori estimate for the exact solution $X$,
we refer to \cite{DZ14}.
Thus we focus on the a priori estimate of $Y^{\delta t}$
and $X^N$. The definition, Eq.~\ref{auxY}, of $Y^{\delta t}$ and the one-side Lipschitz condition on $\Psi_{\delta t}$ (see Lemma \ref{pre}), combined with H\"older and 
Young inequalities, imply that 
for $2\le q<\infty $, 
\begin{align*} 
\|Y^{\delta t}(t)\|_{L^q}^q&\le \|X_0\|_{L^q}^q
+
q \int_0^t \<A Y^{\delta t}(s), (Y^{\delta t}(s))^{q-2}Y^{\delta t}(s)\>ds\\
&\quad+q \int_0^t \<\Psi_{\delta t}(Y^{\delta t}(s)+\omega(s)),(Y^{\delta t}(s))^{q-2}Y^{\delta t}(s)\>ds\\
&\le 
\|X_0\|_{L^q}^q
+q \int_0^t \<\Psi_{\delta t}(Y^{\delta t}(s)+\omega(s))-\Psi_{\delta t}(\omega(s)),(Y^{\delta t}(s))^{q-2}Y^{\delta t}(s)\>ds\\
&\quad+q\int_0^t \<\Psi_{\delta t}(\omega(s)),(Y^{\delta t}(s))^{q-2}Y^{\delta t}(s)\>ds\\
&\le 
\|X_0\|_{L^q}^q
+C(\delta t_0,q) \int_0^t \|Y^{\delta t}(s)\|_{L^q}^qds\\
&\quad+
C(\delta t_0,q)\int_0^t\|\Psi_{\delta t}(\omega(s))\|_{L^q}\|Y^{\delta t}(s))\|_{L^q}^{q-1}ds\\
&\le 
\|X_0\|_{L^q}^q
+C(\delta t_0,q) \int_0^t \|Y^{\delta t}(s)\|_{L^q}^qds+
 C(\delta t_0,q)\int_0^t(1+\|\omega(s)\|_{L^{3q}}^{3q})ds.
\end{align*}
 Using the moment estimate on the stochastic convolution above, applying 
the Gronwall's inequality concludes the proof for $Y^{\delta t}$.

The estimate for $X^N$ is  proved using similar arguments. First, note that it is sufficient to control the values of $X^N$ at the grid points $t_n$, $n\le N$:

\begin{align*}
\E\Big[\sup_{t\in [0,T]} \|X^{N}(t)\|_{L^q}^{p}
\Big]\le C(p,q,T)\E\Big[\sup_{n\le N}\|X^N(t_{n})\|_{L^q}^p\Big].
\end{align*}

By the definition of $X^N(t_n)=X_n$, $n\le N$ and the Lipschitz continuity of $\Phi_{\delta t}$ stated in Lemma \ref{pre},  since $S(t)$ is a contraction semigroup, we obtain 
\begin{align*}
\|X^N(t_{n})-&\omega(t_n)\|_{L^q}
\le 
\Big\|S(\delta t)\Phi_{\delta t}(X^N(t_{n-1}))
-S(\delta t)\omega(t_{n-1})\Big\|_{L^q}\\
&\le 
\Big\|\Phi_{\delta t}(X^N(t_{n-1}))
-\Phi_{\delta t} (\omega(t_{n-1}))\Big\|_{L^q}
+\Big\|\Phi_{\delta t}(\omega(t_{n-1}))
- \omega(t_{n-1})\Big\|_{L^q}\\
&\le 
e^{C\delta t}\Big\|X^N(t_{n-1})
-\omega(t_{n-1})\Big\|_{L^q}
+C\delta t(1+\|\omega(t_{n-1})\|_{L^{3q}}^3).
\end{align*}
Then using
the discrete Gronwall's inequality, and the estimate on
the stochastic convolution, we get
\begin{align*}
\E\Big[\sup_{n\le N}\|X^N(t_{n})\|_{L^q}^p\Big]\le 
C(T,q,p),
\end{align*}
which concludes the proof 
of $X^N$.
\end{proof}

We now study spatial regularity properties of 
the processes $X^N$ and $X$.
 We first state a Lemma (see~\cite{DZ14}) concerning the factorization method.

\begin{lm}\label{frac}
Assume that $p>1$, $r\ge 0$, $\gamma>\frac 1p+r$ and
that $E_1$ and $E_2$ are Banach spaces such that
\begin{align*}
\|S(t)x\|_{E_1}\le Mt^{-r}\|x\|_{E_2}, \quad t\in[0,T], x\in E_2.
\end{align*}
Set
$
G_{\gamma}f(t):=\int_{0}^t(t-s)^{\gamma-1}S(t-s)f(s)ds, 
$
then, for $\gamma >\frac 1p+r$, one has
\begin{align*}
\|G_{\gamma} f\|_{C([0,T]; E_1)} \le C(M)\|f\|_{L^p(0,T; E_2)},
\end{align*}
if $f\in L^p([0,T];E_2)$.
\end{lm}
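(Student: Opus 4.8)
The plan is to establish the uniform bound by a single application of H\"older's inequality to the singular convolution kernel, and then to upgrade this bound to continuity in $E_1$ by a density argument.

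First I would estimate $G_\gamma f(t)$ pointwise. For fixed $t\in[0,T]$, inserting the smoothing hypothesis $\|S(t-s)f(s)\|_{E_1}\le M(t-s)^{-r}\|f(s)\|_{E_2}$ into the Bochner integral defining $G_\gamma f(t)$ gives
\begin{align*}
\|G_\gamma f(t)\|_{E_1}\le M\int_0^t (t-s)^{\gamma-1-r}\|f(s)\|_{E_2}\,ds.
\end{align*}
Applying H\"older's inequality with exponents $p$ and $\frac{p}{p-1}$, and substituting $u=t-s$, yields
\begin{align*}
\|G_\gamma f(t)\|_{E_1}\le M\Big(\int_0^t u^{(\gamma-1-r)\frac{p}{p-1}}\,du\Big)^{\frac{p-1}{p}}\|f\|_{L^p(0,T;E_2)}.
\end{align*}
The last integral is finite precisely when $(\gamma-1-r)\frac{p}{p-1}>-1$, i.e. when $\gamma>\frac 1p+r$, which is exactly the hypothesis; moreover the resulting quantity is bounded by a constant $C(M,T,p,r,\gamma)$ uniformly in $t\in[0,T]$. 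This already proves the estimate in $L^\infty(0,T;E_1)$.

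It remains to show that $G_\gamma f$ is in fact continuous from $[0,T]$ into $E_1$. I would first verify this for $f$ of simple form, say $f=\mathbf 1_{(a,b)}x$ with $x\in E_2$, for which continuity of $t\mapsto G_\gamma f(t)$ follows from the strong continuity of $S$ together with dominated convergence applied to the moving singularity. Since such functions are dense in $L^p(0,T;E_2)$, and since the operator $G_\gamma$ is bounded from $L^p(0,T;E_2)$ into $L^\infty(0,T;E_1)$ by the estimate just proved (applied to differences of functions), an arbitrary $f\in L^p(0,T;E_2)$ can be approximated so that $G_\gamma f$ is a uniform limit in $E_1$ of continuous functions, hence itself continuous. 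Combining this with the uniform bound gives the claimed estimate in $C([0,T];E_1)$.

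The boundedness step is a routine H\"older computation, and the role of the assumption $\gamma>\frac 1p+r$ is transparent: it is exactly the threshold at which the weighted kernel becomes integrable after raising to the conjugate power. The main subtlety lies in the continuity claim, where one must control both the moving weight $(t-s)^{\gamma-1-r}$ and the moving semigroup argument $S(t-s)$ as $t$ varies, in particular near $t=0$; the density argument resolves this by reducing the continuity question to the simplest integrands and then transferring it through the uniform operator bound.
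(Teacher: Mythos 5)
The paper offers no proof of Lemma~\ref{frac}: it is quoted directly from \cite{DZ14} as a known ingredient of the factorization method. Your argument is essentially the standard proof found there and it is correct: H\"older's inequality with the exponent pair $(p,\tfrac{p}{p-1})$ applied to the kernel $(t-s)^{\gamma-1-r}$ gives the uniform-in-$t$ bound precisely under the threshold condition $\gamma>\tfrac1p+r$, and continuity is obtained for a dense class of integrands and then transferred to general $f$ through the uniform operator bound. The one point worth making explicit is that your continuity step uses, beyond the stated smoothing hypothesis, the continuity of $u\mapsto S(u)x$ with values in $E_1$ for $u>0$ (so that dominated convergence applies after the change of variables); this is not literally contained in the hypotheses as written, but it holds automatically in the paper's setting, where $S$ is an analytic semigroup and $E_1$, $E_2$ are the spaces $\HH^{\beta}$, $\HH$ or $L^q$.
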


\begin{lm}\label{reg}
Assume that $d=1$, $Q=I$, $p\ge 2$ and  $\|X_0\|_{\HH^{\beta_1}}<\infty$, $\beta_1 >0$.
The solution $u$ satisfy the following estimate: if $\beta <\min(\frac 12,\beta_1)$, then
\begin{align*}
\E\Big[\sup_{0\le s\le T}\|X(t)\|_{\HH^{\beta}}^p\Big]\le C(p,T,\beta,X_0)<\infty.
\end{align*}
\end{lm}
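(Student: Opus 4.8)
The plan is to estimate separately the three terms in the mild formulation \eqref{exa}, namely $S(t)X_0$, the drift convolution $D(t):=\int_0^t S(t-s)F(X(s))\,ds$, and the stochastic convolution $\omega(t)=\int_0^t S(t-s)\,dW^Q(s)$. Since $\beta\le\beta_1$ and $S(t)$ commutes with $(-A)^{\beta/2}$ and is a contraction on $\HH$, the first term is handled at once: $\sup_{t\in[0,T]}\|S(t)X_0\|_{\HH^\beta}\le\|X_0\|_{\HH^\beta}\le C\|X_0\|_{\HH^{\beta_1}}$, the last inequality using that $(-A)^{(\beta-\beta_1)/2}$ is bounded on $\HH$ because $\beta-\beta_1\le 0$ and the spectrum is bounded below by $\lambda_1>0$. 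It then remains to bound the two convolutions in $L^p(\Omega;C([0,T];\HH^\beta))$.

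For the drift term I would invoke the smoothing property of the analytic semigroup, $\|(-A)^{\beta/2}S(r)x\|_{L^2}\le C r^{-\beta/2}\|x\|_{L^2}$, to get the pathwise bound $\|D(t)\|_{\HH^\beta}\le C\int_0^t (t-s)^{-\beta/2}\|F(X(s))\|_{L^2}\,ds$. Since $\beta<\tfrac12<2$, the kernel is integrable and $\int_0^t(t-s)^{-\beta/2}\,ds\le C(T,\beta)$ uniformly in $t\in[0,T]$, whence $\sup_{t\le T}\|D(t)\|_{\HH^\beta}\le C(T,\beta)\sup_{s\le T}\|F(X(s))\|_{L^2}$. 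As $F(x)=x-x^3$ gives $\|F(X(s))\|_{L^2}\le\|X(s)\|_{L^2}+\|X(s)\|_{L^6}^3$, the a priori moment estimates of Lemma \ref{pri} (with $q=2$ and $q=6$) yield $\E[\sup_{s\le T}\|F(X(s))\|_{L^2}^p]<\infty$, and therefore the desired bound on $\E[\sup_{t\le T}\|D(t)\|_{\HH^\beta}^p]$.

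The heart of the proof, and the step that fixes the threshold $\beta<\tfrac12$, is the regularity of the stochastic convolution. I would treat it by the factorization method of Lemma \ref{frac}: for a suitable exponent $\alpha\in(0,1)$, write $\omega=c_\alpha\,G_\alpha Y_\alpha$ with $Y_\alpha(s)=\int_0^s(s-r)^{-\alpha}S(s-r)\,dW^Q(r)$, and apply Lemma \ref{frac} with $E_1=\HH^\beta$, $E_2=L^2$, $r=\beta/2$ and $\gamma=\alpha>\tfrac1p+\tfrac\beta2$, to obtain $\|\omega\|_{C([0,T];\HH^\beta)}\le C\|Y_\alpha\|_{L^p(0,T;L^2)}$. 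The moments of $Y_\alpha$ are then controlled by the Burkholder inequality \eqref{Burk} (or directly by Gaussianity), the key computation being that $\E\|Y_\alpha(s)\|_{L^2}^2=\int_0^s u^{-2\alpha}\sum_k e^{-2\lambda_k u}\,du\le C\sum_k\lambda_k^{2\alpha-1}$. In dimension $d=1$ one has $\lambda_k\sim k^2$, so this series converges exactly when $\alpha<\tfrac14$; together with the factorization constraint $\alpha>\tfrac1p+\tfrac\beta2$, such an $\alpha$ exists as soon as $\beta<\tfrac12-\tfrac2p$.

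The main obstacle is precisely this mismatch: the naive argument produces the $p$-dependent threshold $\beta<\tfrac12-\tfrac2p$ rather than the stated $\beta<\tfrac12$. I would circumvent it in the standard way: given $\beta<\tfrac12$ and $p$, first pick $p'\ge p$ large enough that $\beta<\tfrac12-\tfrac2{p'}$, establish the estimate in $L^{p'}(\Omega;C([0,T];\HH^\beta))$, and then descend to the exponent $p$ using that $\PP$ is a finite measure (Hölder's inequality). Alternatively one may simply quote the corresponding stochastic-convolution regularity result from \cite{DZ14}. Summing the three contributions yields $\E[\sup_{t\le T}\|X(t)\|_{\HH^\beta}^p]\le C(p,T,\beta,X_0)<\infty$ and completes the proof.
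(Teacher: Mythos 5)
Your proof is correct and follows essentially the same route as the paper: split the mild formulation into the three terms, handle $S(t)X_0$ by contractivity, the drift convolution by parabolic smoothing, and the stochastic convolution via the factorization method / the regularity result cited from \cite{DZ14}. The only cosmetic difference is that the paper runs the factorization (Lemma~\ref{frac}) on the deterministic convolution whereas you use the direct smoothing bound $\|(-A)^{\beta/2}S(r)x\|\le Cr^{-\beta/2}\|x\|$ there -- both are valid -- and your explicit handling of the $p$-dependent threshold for the stochastic convolution (establish the bound for large $p'$, then descend by H\"older) is precisely what the paper's citation of \cite{DZ14} leaves implicit.
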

\begin{proof}
It is known (see e.g. \cite{DZ14}) that, for $\beta<\frac{1}{2}$,
\begin{align*}
\E\Big[\sup_{0\le s\le T}\|\omega(s)\|_{\HH^{\beta}}^p\Big]
\le 
C(p,T).
\end{align*}
Thus we only need to study the regularity of 
$S(t)X_0$ and of the deterministic convolution 
$\int_0^tS(t-s)F(X(s))ds$.
First,
\begin{align*}
\|S(t)X_0\|_{\HH^{\beta_1}} \le \|X_0\|_{\HH^{\beta_1}}.
\end{align*}
For the deterministic convolution, by the Fubini theorem,
we have 
\begin{align*}
\int_0^tS(t-s)F(X(s))ds
&=\frac {\sin{\gamma\pi}}{\pi}\int_0^t(t-s)^{\gamma-1}S(t-s)Y_{\gamma}(s)ds,\\
Y_{\gamma}(t)&=\int_{0}^t(t-s)^{-\gamma }S(t-s)F(X(s))ds
\end{align*}
where we choose $\gamma<\frac 14$ such that the regularity result also holds for the stochastic convolution.
Notice that
$
\|S(t)x\|_{\HH^{\beta}}\le Mt^{-\frac \beta 2}\|x\|_{\HH}$, 
for  $\beta>0$.
Taking $E_1=\HH^{\beta}$ and $E_2=\HH$, $r=\frac \beta 2$.
Lemma \ref{frac} yields that for large enough $p$ and $\gamma> \frac \beta2+\frac 1p$,
\begin{align*}
&\E\Big[\sup_{0\le t\le T}\Big\|\int_0^tS(t-s)F(X(s))ds\Big\|_{\HH^{\beta}}^p\Big]\\
&\le 
C\E\Big[
 \int_0^T \Big\|Y_{\gamma}(t) \Big\|_{\HH}^pdt\Big]\\
&\le 
C\E\Big[
 \Big(\int_0^T t^{-2\gamma}\|S(t)\|_{\LL(\HH,\HH)}\Big(1+\sup_{r\in [0,T]}\|X(r)\|_{L^6}^3 \Big)dt\Big)^p\Big]\\
 &\le C(T,p,X_0).
\end{align*}
This concludes the proof.
\end{proof}

 Using standard arguments, including the use of a discrete Gronwall's lemma, and the two lemmas stated above, one may derive the following almost sure result (see~\cite{BG18} for similar arguments): assume $d=1$, $Q=I$, $\beta<\frac 12$, $X_0 \in \HH^{\beta_1}\cap \mathcal E$, $\beta_1>0$. Then almost surely, for some $C(\omega)\in(0,\infty)$, one has
\begin{align*}
\sup_{n\le N} \|X^N(t_n)- X(t_n)\| \le C(\omega)\delta t^{\min(\frac \beta 2, \frac {\beta_1} 2)}.
\end{align*}

We omit the details. As explained above, the variational approach used below allows us to go beyond this result and get a strong rate of convergence.

\subsection{Optimal  strong convergence rate in space-time white noise case}

We are now in position to apply the variational approach developed in~\cite{BJ16} in order to obtain strong convergence rates for the splitting scheme~\eqref{spl}.

We first state the main result of this section.
\begin{tm}\label{strong 1}
Assume that $d=1$, $Q=I$, $\|X_0\|_{L^{9q}}<\infty$,
$p\ge q=2m$, $m\in \N^+$ and $\eta<\frac 1q$.
Then  $X^N$  satisfies 
\begin{align*}
\Big\|\sup_{t\in[0,T]} \|X^N(t)- X(t)\|_{L^{q}}\Big\|_{L^p(\Omega)} \le C(T,X_0,p,q)\delta t^{\min(\frac 14, \eta)},
\end{align*} 
If in addition $\|X_0\|_{\mathbb W^{\beta,3q}}<\infty$, $\beta>0$,
then 
\begin{align*}
\Big\|\sup_{t\in[0,T]} \|X^N(t)- X(t)\|_{L^{q}}\Big\|_{L^p(\Omega)} \le C(T,X_0,p,q)\delta t^{\min(\frac 14, \frac \beta 2+\eta)},
\end{align*} 

\end{tm}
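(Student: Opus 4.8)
The plan is to estimate the error $Z:=X^N-X$ directly by a variational (energy) argument in $L^q$, exploiting that the additive noise is integrated exactly in both the scheme and the exact solution. Writing $Y:=X-\omega$ and $Y^N:=X^N-\omega$, the stochastic convolutions cancel, so that $Z=Y^N-Y$ and it suffices to bound $Y^N-Y$. Here $Y$ solves the random PDE~\eqref{exaY}, i.e. $\frac{d}{dt}Y(t)=AY(t)+F(X(t))$, while from the interpolation~\eqref{mil-spl} one checks that on each subinterval $(t_n,t_{n+1})$ the process $Y^N$ is differentiable with $\frac{d}{dt}Y^N(t)=AY^N(t)+S(t-\lfloor t\rfloor_{\delta t})\Psi_{\delta t}(X^N(\lfloor t\rfloor_{\delta t}))$. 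Subtracting and testing the error equation against $|Z|^{q-2}Z$ (licit for $q=2m$), I obtain an evolution identity for $\|Z(t)\|_{L^q}^q$ in which the Laplacian contribution $\<AZ,|Z|^{q-2}Z\>\le 0$ is discarded.

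The forcing in this identity is the pairing of
\begin{align*}
S(t-\lfloor t\rfloor_{\delta t})\Psi_{\delta t}(X^N(\lfloor t\rfloor_{\delta t}))-F(X(t))
&=\big(S(t-\lfloor t\rfloor_{\delta t})-I\big)\Psi_{\delta t}(X^N(\lfloor t\rfloor_{\delta t}))\\
&\quad+\big(\Psi_{\delta t}(X^N(\lfloor t\rfloor_{\delta t}))-\Psi_{\delta t}(X^N(t))\big)\\
&\quad+\big(\Psi_{\delta t}(X^N(t))-\Psi_{\delta t}(X(t))\big)\\
&\quad+\big(\Psi_{\delta t}(X(t))-F(X(t))\big)
\end{align*}
with $|Z|^{q-2}Z$; denote the four resulting scalar terms by $T_1,T_2,T_3,T_4$. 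The key structural term is $T_3$: since $Z^{q-1}=|Z|^{q-2}Z$, the one-sided Lipschitz bound of Lemma~\ref{pre} yields $T_3\le e^{C\delta t_0}\|Z\|_{L^q}^q$, which is absorbed by Gronwall and does not affect the rate. The term $T_4$ is the nonlinearity-modification error: using $|\Psi_{\delta t}(z)-F(z)|\le C\delta t(1+|z|^5)$ together with H\"older and Young inequalities and the moment bound on $\sup_t\|X(t)\|_{L^{5q}}$ from Lemma~\ref{pri}, it contributes a forcing of order $\OOO(\delta t)$ and hence also does not obstruct the optimal rate.

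The convergence rate is therefore dictated by $T_2$ and $T_1$. For $T_2$ I would apply the local Lipschitz estimate of Lemma~\ref{pre} and H\"older's inequality (with exponents $3q,\tfrac{3q}{2},\tfrac{q}{q-1}$) to reduce matters to the temporal increment $\|X^N(t)-X^N(\lfloor t\rfloor_{\delta t})\|_{L^{3q}}$, weighted by $L^{3q}$-moments of $X^N$ controlled via Lemma~\ref{pri}. Splitting this increment as $(\omega(t)-\omega(\lfloor t\rfloor_{\delta t}))+(Y^N(t)-Y^N(\lfloor t\rfloor_{\delta t}))$, the stochastic convolution increment carries temporal H\"older regularity of order $\tfrac14-\epsilon$ in $L^{3q}$, which is exactly the source of the cap $\delta t^{1/4}$; the drift increment, rewritten through $Y^N(t)-Y^N(\lfloor t\rfloor_{\delta t})=(S(t-\lfloor t\rfloor_{\delta t})-I)Y^N(\lfloor t\rfloor_{\delta t})+(t-\lfloor t\rfloor_{\delta t})S(t-\lfloor t\rfloor_{\delta t})\Psi_{\delta t}(X^N(\lfloor t\rfloor_{\delta t}))$ and estimated with the smoothing bound $\|(S(h)-I)(-A)^{-\theta}\|_{\LL(L^{3q})}\le Ch^\theta$, contributes $\delta t^{\beta/2+\eta}$ when $X_0\in\mathbb W^{\beta,3q}$ (and $\delta t^\eta$ when only $X_0\in L^{9q}$ is assumed). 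The initial regularity enters through $\|S(\cdot)X_0\|_{\mathbb W^{\beta,3q}}$, and the constraint $\eta<\tfrac1q$ arises because, after Young's inequality raises the forcing to the power $q$, the singular factor $\lfloor t\rfloor_{\delta t}^{-q\eta}$ produced by the smoothing estimate applied to $S(\lfloor t\rfloor_{\delta t})X_0$ must remain time-integrable on $[0,T]$. The semigroup-freezing term $T_1$ is treated analogously, bounding $\|(S(h)-I)\Psi_{\delta t}(X^N(\lfloor t\rfloor_{\delta t}))\|_{L^q}$ by $Ch^\theta$ times a fractional norm of $\Psi_{\delta t}(X^N)$ and using the same regularity budget.

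Finally, I would collect these estimates into a differential inequality $\frac{d}{dt}\|Z(t)\|_{L^q}^q\le C\|Z(t)\|_{L^q}^q+G(t)$, apply Gronwall's lemma pathwise (which produces the supremum over $t$ on the left-hand side at no extra cost), and then take the $L^p(\Omega)$-norm, using H\"older in $\Omega$ to separate the increment moments from the a priori bounds of Lemma~\ref{pri} and the stochastic-convolution estimates. This yields $\big\|\sup_t\|Z(t)\|_{L^q}\big\|_{L^p(\Omega)}\le C\delta t^{\min(1/4,\,\beta/2+\eta)}$, with $\beta=0$ giving the first assertion. The main obstacle I anticipate is the sharp temporal-regularity analysis of $X^N$ in the stronger $L^{3q}$ norm with the correct stochastic moments: precisely extracting the exponent $\tfrac14$ from the stochastic convolution increment, and tracking how the spatial regularity $\beta$ of the initial datum and the integrability restriction $\eta<\tfrac1q$ combine in both the drift increment inside $T_2$ and the semigroup-freezing term $T_1$.
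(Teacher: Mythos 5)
Your overall architecture is close in spirit to the paper's variational argument, but it is not the same decomposition, and one of your four forcing terms cannot be closed as described. The paper does not run a single energy estimate on $X^N-X$: it first inserts the auxiliary SPDE $X^{\delta t}$ with nonlinearity $\Psi_{\delta t}$, so that the modification error $\Psi_{\delta t}-F$ (your $T_4$) is handled in a separate clean comparison (Proposition~\ref{error2}); then, for the discretization error $X^N-X^{\delta t}$, it splits off the semigroup-freezing error \emph{in mild (Duhamel) form} before any testing against $|Z|^{q-2}Z$ takes place, and only the remainder $Y^{\delta t}-\widehat Y$ goes through the monotonicity/Gronwall step, with the role of your increment $X^N(t)-X^N(\lfloor t\rfloor_{\delta t})$ played by the quantity $\widehat Y(s)+\omega(s)-X^N(\lfloor s\rfloor_{\delta t})$ of Lemma~\ref{error0}. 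Your treatment of $T_2$, $T_3$, $T_4$, of the singular factor $\lfloor t\rfloor_{\delta t}^{-\eta}$ and of the constraint $\eta<\frac1q$ is consistent with the paper's computations.

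The gap is $T_1$. In your energy identity the freezing error appears pointwise in time as $\langle (S(s-\lfloor s\rfloor_{\delta t})-I)\Psi_{\delta t}(X^N(\lfloor s\rfloor_{\delta t})),\,|Z(s)|^{q-2}Z(s)\rangle$, with no semigroup $S(t-s)$ acting on it. After H\"older and Young you must therefore make $\|(S(h)-I)\Psi_{\delta t}(X^N(\lfloor s\rfloor_{\delta t}))\|_{L^q}$ small, and since $\|(S(h)-I)v\|_{L^q}\le Ch^{\theta}\|v\|_{\mathbb W^{2\theta,q}}$ is the only available mechanism, you need a fractional Sobolev bound on $\Psi_{\delta t}(X^N)$, i.e.\ on a cubic function of $X^N$. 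Under the hypotheses of the theorem ($X_0\in L^{9q}$ only) no such bound is available: Lemma~\ref{pri} gives only $L^{r}$ moments of $X^N$, Lemma~\ref{reg} concerns $X$ and not $X^N$, and even granting some $\mathbb W^{s,3q}$ regularity of $X^N$ you would still need a fractional chain rule to transfer it to $\Psi_{\delta t}(X^N)$. The paper avoids the problem entirely by estimating $\int_0^t\bigl(S(t-\lfloor s\rfloor_{\delta t})-S(t-s)\bigr)\Psi_{\delta t}(X^N(\lfloor s\rfloor_{\delta t}))\,ds$ in mild form, writing $S(t-\lfloor s\rfloor_{\delta t})-S(t-s)=A^{\eta_1}S(t-s)\,A^{-\eta_1}(S(s-\lfloor s\rfloor_{\delta t})-I)$ and dumping the unbounded power of $A$ onto $S(t-s)$ at the cost of the integrable singularity $(t-s)^{-\eta_1}$; this costs only $L^{3q}$ moments of $X^N$. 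To repair your proof, peel off the freezing error in Duhamel form before passing to the differential/energy formulation (exactly the first step of the proof of Proposition~\ref{error1}) and run your $T_2$--$T_4$ analysis only on what remains.
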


Note that, for $q\in[2,4)$, the first estimate of Theorem~\ref{strong 1} gives order of convergence $\frac{1}{4}$. If $q\in[4,\infty)$,the order of convergence $\frac{1}{4}$ is obtained thanks to the second estimate, under the assumption $\beta>\frac{1}{2}-\frac{2}{q}$.

Observe that the error can be decomposed as follows:
\begin{align*}
\|X^N(t)-X(t)\|_{L^q}&\le \|X^N(t)-\omega(t)-Y^{\delta t}(t)\|_{L^q}+\|Y^{\delta t}(t)+\omega(t)-X(t)\|_{L^q}\\
&\le \|X^N(t)-X^{\delta t}(t)\|_{L^q}+\|Y^{\delta t}(t)-Y(t)\|_{L^q}.
\end{align*}
Then Theorem~\ref{strong 1} is a straightforward consequence of the two auxiliary results stated below.

\begin{prop}\label{error2}
Assume that $d=1$, $Q=I$, $\|X_0\|_{L^{5q}}<\infty$. 
Then the proposed method $X^N$ is strongly convergent to
$X$ and satisfies 
\begin{align*}
\Big\|\sup_{t\in [0,T]} \|Y^{\delta t}(t)-Y(t)\|_{L^q}\Big\|_{L^p(\Omega)} \le C(T,X_0,p,q)\delta t,
\end{align*} 
where $p\ge q=2m$, $m\in \N^+$.
\end{prop}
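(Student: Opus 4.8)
The plan is to estimate the difference $Y^{\delta t}-Y$ directly, where $Y$ solves Eq.~\eqref{exaY} with nonlinearity $F=\Psi_0$ and $Y^{\delta t}$ solves Eq.~\eqref{auxY} with the modified nonlinearity $\Psi_{\delta t}$, both driven by the \emph{same} stochastic convolution $\omega$. Since the noise is additive and identical in both equations, the difference $R^{\delta t}(t):=Y^{\delta t}(t)-Y(t)$ satisfies a \emph{deterministic} (pathwise) random PDE with zero initial data,
\begin{align*}
\frac{d}{dt}R^{\delta t}(t)=A R^{\delta t}(t)+\Psi_{\delta t}(Y^{\delta t}(t)+\omega(t))-F(Y(t)+\omega(t)),\qquad R^{\delta t}(0)=0.
\end{align*}
This is the payoff of the variational decomposition: no stochastic integral appears, so I can test the equation against $(R^{\delta t})^{q-1}$ and compute the evolution of $\|R^{\delta t}(t)\|_{L^q}^q$ by ordinary energy estimates, mollifying as usual.

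The key step is to split the nonlinear term as
\begin{align*}
\Psi_{\delta t}(Y^{\delta t}+\omega)-F(Y+\omega)
=\bigl(\Psi_{\delta t}(Y^{\delta t}+\omega)-\Psi_{\delta t}(Y+\omega)\bigr)
+\bigl(\Psi_{\delta t}(Y+\omega)-\Psi_{0}(Y+\omega)\bigr).
\end{align*}
For the first bracket I would invoke the one-sided Lipschitz bound from Lemma~\ref{pre}, namely $(\Psi_{\delta t}(z_1)-\Psi_{\delta t}(z_2))(z_1-z_2)^{q-1}\le e^{C\delta t_0}|z_1-z_2|^q$ applied with $z_1=Y^{\delta t}+\omega$ and $z_2=Y+\omega$, so that $z_1-z_2=R^{\delta t}$; this contributes a harmless $C\|R^{\delta t}(t)\|_{L^q}^q$ term, exactly the form needed for Gronwall. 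The dissipative term $q\langle AR^{\delta t},(R^{\delta t})^{q-1}\rangle\le 0$ is simply dropped. For the second bracket I would use the consistency estimate $|\Psi_{\delta t}(z)-\Psi_0(z)|\le C(\delta t_0)\delta t(1+|z|^5)$, which after H\"older/Young against $(R^{\delta t})^{q-1}$ produces a term of order $\delta t$ times moments of $Y+\omega$, i.e. $C\delta t(1+\|Y(t)+\omega(t)\|_{L^{5q}}^{5q})+C\|R^{\delta t}(t)\|_{L^q}^q$.

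After collecting terms I expect the differential inequality
\begin{align*}
\frac{d}{dt}\|R^{\delta t}(t)\|_{L^q}^q
\le C\|R^{\delta t}(t)\|_{L^q}^q
+C\,\delta t^q\bigl(1+\|Y(t)+\omega(t)\|_{L^{5q}}^{5q}\bigr),
\end{align*}
after which Gronwall's inequality gives $\sup_t\|R^{\delta t}(t)\|_{L^q}^q\lesssim \delta t^q\int_0^T(1+\|Y+\omega\|_{L^{5q}}^{5q})\,ds$; taking $L^{p/q}(\Omega)$ norms and using the a priori moment bounds of Lemma~\ref{pri} together with the stochastic convolution estimates yields the claimed $\mathcal O(\delta t)$ rate in $L^p(\Omega;C(0,T;L^q))$. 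The hypothesis $\|X_0\|_{L^{5q}}<\infty$ is precisely what guarantees the required $L^{5q}$-moments of $Y$ enter finitely. The main obstacle I anticipate is bookkeeping the exponents carefully in the Young-inequality step so that the consistency error genuinely carries a full factor of $\delta t$ (not $\delta t^{1/2}$ or less) and so that only moments controlled by Lemma~\ref{pri} and the convolution estimates appear; the $|z|^5$ growth in the consistency bound is what forces the $L^{5q}$ integrability demand on the data, and checking that these moments are finite uniformly in time is the delicate accounting rather than any deep analytic difficulty.
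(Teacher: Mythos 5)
Your proposal is correct and follows essentially the same route as the paper: test the pathwise equation for $Y^{\delta t}-Y$ against $(Y^{\delta t}-Y)^{q-1}$, drop the dissipative term, split the nonlinearity into a one-sided-Lipschitz part plus the consistency error $\Psi_{\delta t}-\Psi_0$ of order $\delta t(1+|z|^5)$, and close with Young, Gronwall and the moment bounds of Lemma~\ref{pri}. The only caveat is the minor slip in your prose where the consistency contribution is written as $C\delta t(1+\|Y+\omega\|_{L^{5q}}^{5q})$ rather than $C\delta t^{q}(1+\|Y+\omega\|_{L^{5q}}^{5q})$; your displayed differential inequality has the correct power, matching the paper.
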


Note that $Y^{\delta t}(t)-Y(t)=X^{\delta t}(t)-X(t)$, and in the case $q=2$, Proposition~\ref{error2} has already been proved in~\cite{BG18}.

\begin{prop}\label{error1}
Assume that $d=1$, $Q=I$, $\|X_0\|_{L^{9q}}<\infty$, $p\ge q=2m$, $m\in \N^+$. 
Then the proposed method $X^N$ satisfies for $\eta<\frac 1q$,
\begin{align}\label{main-err}
\Big\|\sup_{t\in[0,T]} \| X^N(t)-X^{\delta t}(t)\|_{L^{q}}\Big\|_{L^p(\Omega)} \le C(T,X_0,p,q)\delta t^{\min(\frac 1 4, \eta)}.
\end{align} 
If in addition assume that $\|X_0\|_{\mathbb W^{\beta,3q}}<\infty$, $\beta>0$, 
then  for $\eta<\frac 1q$,
\begin{align}\label{main-err1}
\Big\|\sup_{t\in[0,T]} \| X^N(t)-X^{\delta t}(t)\|_{L^{q}}\Big\|_{L^p(\Omega)} \le C(T,X_0,p,q)\delta t^{\min(\frac 14, \eta +\frac \beta 2)}.
\end{align}
\end{prop}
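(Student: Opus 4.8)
The plan is to exploit the crucial structural fact that the numerical interpolation $X^N$ of Eq.~\eqref{mil-spl} and the auxiliary solution $X^{\delta t}$ of Eq.~\eqref{aux} are driven by the \emph{same} stochastic convolution $\omega$, which therefore cancels in the error $e(t):=X^N(t)-X^{\delta t}(t)$. Consequently $e$ solves a random PDE containing no stochastic integral, so no martingale inequality is needed for $e$ itself, and one may hope to close an estimate by the variational (energy) method of~\cite{BJ16}: test the equation for $e$ against $|e|^{q-2}e$, use the dissipativity $\langle A\phi,|\phi|^{q-2}\phi\rangle\le 0$, and control the nonlinear contribution via the monotonicity (one-sided Lipschitz) property of $\Psi_{\delta t}$ from Lemma~\ref{pre}. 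The non-global-Lipschitz character of $\Psi_{\delta t}$ is precisely what forces this variational route rather than a naive mild-form Gronwall argument.

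To separate the two distinct sources of error, I would first introduce the intermediate process $\hat X^N(t)=S(t)X_0+\int_0^tS(t-s)\Psi_{\delta t}(X^N(\lfloor s\rfloor_{\delta t}))\,ds+\omega(t)$, which uses the exact semigroup $S(t-s)$ but the frozen argument $X^N(\lfloor s\rfloor_{\delta t})$, and decompose $e=(X^N-\hat X^N)+(\hat X^N-X^{\delta t})$. The first piece is the semigroup time-shift remainder $R(t)=\int_0^t\big(S(s-\lfloor s\rfloor_{\delta t})-I\big)S(t-s)\Psi_{\delta t}(X^N(\lfloor s\rfloor_{\delta t}))\,ds$, which I would bound using the analytic smoothing estimate $\|(S(\sigma)-I)S(\tau)\|\lesssim\sigma^{\theta}\tau^{-\theta}$ with $\sigma\le\delta t$, the growth bound $\|\Psi_{\delta t}(z)\|_{L^q}\lesssim 1+\|z\|_{L^{3q}}^3$ together with the a priori moments of $X^N$ from Lemma~\ref{pri} (one source of the high integrability demand $\|X_0\|_{L^{9q}}$), and the factorization Lemma~\ref{frac} to upgrade the pointwise-in-$t$ bound to the supremum in time. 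Extra spatial regularity of $X_0$ (the hypothesis $\|X_0\|_{\mathbb{W}^{\beta,3q}}<\infty$) improves $\theta$ in this step and is what produces the gain $\delta t^{\beta/2}$ in the second, sharper estimate.

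The heart of the matter is the second piece $\tilde e=\hat X^N-X^{\delta t}$, which genuinely solves $\partial_t\tilde e=A\tilde e+\Psi_{\delta t}(X^N(\lfloor t\rfloor_{\delta t}))-\Psi_{\delta t}(X^{\delta t}(t))$ with $\tilde e(0)=0$. Testing against $|\tilde e|^{q-2}\tilde e$ and discarding the nonpositive $A$-term, I would split the nonlinear discrepancy as $\big(\Psi_{\delta t}(X^N(\lfloor t\rfloor_{\delta t}))-\Psi_{\delta t}(X^{\delta t}(\lfloor t\rfloor_{\delta t}))\big)+\big(\Psi_{\delta t}(X^{\delta t}(\lfloor t\rfloor_{\delta t}))-\Psi_{\delta t}(X^{\delta t}(t))\big)$. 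The second (time-consistency) term is handled by the local Lipschitz bound $|\Psi_{\delta t}(z_1)-\Psi_{\delta t}(z_2)|\le C|z_1-z_2|(1+|z_1|^2+|z_2|^2)$ of Lemma~\ref{pre}, the cubic weights being absorbed by the uniform moments of $X^{\delta t}$, times the temporal increment $\|X^{\delta t}(t)-X^{\delta t}(\lfloor t\rfloor_{\delta t})\|$; since $X^{\delta t}=Y^{\delta t}+\omega$ and the smooth part $Y^{\delta t}$ is more regular, this increment inherits the Hölder-$\tfrac14$ temporal regularity of the stochastic convolution in the $d=1$ white-noise case, which is the origin of the intrinsic cap $\delta t^{1/4}$; measuring this increment in the $L^q$-based norm compatible with the energy estimate limits the usable exponent to $\eta<\tfrac1q$, whence the rate $\min(\tfrac14,\eta)$. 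The first term is treated by the one-sided Lipschitz inequality $(\Psi_{\delta t}(z_1)-\Psi_{\delta t}(z_2))(z_1-z_2)^{q-1}\le e^{C\delta t_0}|z_1-z_2|^q$, and a discrete Gronwall inequality then closes the recursion in $n$ and yields the supremum bound.

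The step I expect to be the main obstacle is reconciling the monotone term with the test function. The one-sided Lipschitz estimate is tailored to the pairing against $\big(X^N(\lfloor t\rfloor_{\delta t})-X^{\delta t}(\lfloor t\rfloor_{\delta t})\big)^{q-1}=e(\lfloor t\rfloor_{\delta t})^{q-1}$, whereas the energy method naturally produces the weight $\tilde e(t)^{q-1}$. Bridging the gap requires adding and subtracting, estimating $\tilde e(t)-e(\lfloor t\rfloor_{\delta t})$ in terms of short-time increments and the remainder $R$, and applying Young's inequality in a way that preserves the favorable dissipative sign rather than reintroducing an uncontrolled cubic term; keeping every cubic weight absorbed by the available $L^{kq}$ moments (ultimately dictating the $L^{9q}$ hypothesis on $X_0$) is the delicate, calculation-heavy core of the argument. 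The remaining steps—the semigroup-shift remainder and the time-consistency term—are technical but follow standard smoothing, moment, and factorization estimates.
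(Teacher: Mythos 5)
Your overall architecture matches the paper's: you introduce the same auxiliary process (your $\hat X^N$ is $\widehat Y+\omega$ in the paper's notation), split the error into a semigroup time-shift remainder plus $\tilde e=\hat X^N-X^{\delta t}$, and run an $L^q$ energy estimate on $\tilde e$. The gap is in how you split the nonlinear discrepancy inside that energy estimate. You insert $\Psi_{\delta t}(X^{\delta t}(\lfloor t\rfloor_{\delta t}))$ as the intermediate point, which forces the pairing of $\Psi_{\delta t}(X^N(\lfloor t\rfloor_{\delta t}))-\Psi_{\delta t}(X^{\delta t}(\lfloor t\rfloor_{\delta t}))$ against $\tilde e(t)^{q-1}$ rather than against $\bigl(X^N(\lfloor t\rfloor_{\delta t})-X^{\delta t}(\lfloor t\rfloor_{\delta t})\bigr)^{q-1}$. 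You correctly flag this mismatch as the main obstacle, but the proposed repair does not close: for $q>2$ the difference $\tilde e(t)^{q-1}-e(\lfloor t\rfloor_{\delta t})^{q-1}$ multiplies a term that can only be bounded via the local Lipschitz estimate with its cubic weights, and it involves the increment $\tilde e(t)-\tilde e(\lfloor t\rfloor_{\delta t})$, which carries the drift of $X^{\delta t}$ over a step --- a quantity whose smallness is not available a priori and is essentially what you are trying to prove. The one-sided Lipschitz property is a pointwise monotonicity statement and is simply unusable once the test function and the argument difference are decoupled.

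The paper avoids this entirely by choosing the intermediate point $\Psi_{\delta t}(\widehat Y(s)+\omega(s))=\Psi_{\delta t}(\hat X^N(s))$ instead. Then the first difference, $\Psi_{\delta t}(Y^{\delta t}+\omega)-\Psi_{\delta t}(\widehat Y+\omega)$, is paired against exactly $(Y^{\delta t}-\widehat Y)^{q-1}$, so the one-sided Lipschitz condition applies verbatim with no mismatch; the second difference, $\Psi_{\delta t}(\hat X^N(s))-\Psi_{\delta t}(X^N(\lfloor s\rfloor_{\delta t}))$, is controlled by the local Lipschitz bound together with a dedicated estimate (Lemma~\ref{error0}) showing that $\|\hat X^N(s)-X^N(\lfloor s\rfloor_{\delta t})\|_{L^{3q}}$ is of order $(1+\lfloor s\rfloor_{\delta t}^{-\eta})\delta t^{\min(1/4,\eta)}$. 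That lemma, applied with exponent $3q$, is also the true source of the $L^{9q}$ hypothesis and of the restriction $\eta<\frac1q$ (integrability of $\lfloor s\rfloor_{\delta t}^{-\eta q}$, coming from the initial-data smoothing term $I_1$), not the $L^q$-compatibility of a temporal increment as you suggest. You would need to adopt this choice of intermediate point, or supply a genuinely new argument for the power mismatch, before your proof can be completed.
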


It now remains to prove Propositions~\ref{error2} and~\ref{error1}.

\begin{proof}[Proof of Proposition~\ref{error2}]
 Note that $Y^{\delta t}(0)=Y(0)$, and recall that $Y^{\delta t}(t)-Y(t)=X^{\delta t}(t)-X(t)$. Using the differential forms of the random PDEs~\eqref{exaY} and~\eqref{auxY},
\begin{align*}
&\|Y^{\delta t}(t)- Y(t)\|_{L^q}^q\\
&=q\int_{\epsilon}^t\<(Y^{\delta t}(s)-Y(s))^{q-2}(Y^{\delta t}(s)- Y(s)), 
AY^{\delta t}(s)-
A Y(s)\>ds\\
&\quad +q\int_{\epsilon}^t\<(X^{\delta t}(s)-X(s))^{q-2}(X^{\delta t}(s)- X(s)), 
\Psi_{\delta t}(X(s))-
\Psi_{\delta t}(X(s)))\>ds\\
&\quad+q\int_{\epsilon}^t\<(X^{\delta t}(s)-X(s))^{q-2}(X^{\delta t}(s)- X(s)), 
\Psi_{\delta t}(X(s)))-
F(X(s)))\>ds.
\end{align*}

Thanks to Lemma~\ref{pre}, combined with Young's inequality and Gronwall's lemma, we obtain  
\begin{align*}
&\|Y^{\delta t}(t)-Y(t)\|_{L^q}^q\\
&\le C(T)\delta t^{q}\int_0^T\Big(1+\Big\|Y(s)+\omega(s) \Big\|^{5q}_{L^{5q}}\Big)ds, 
\end{align*}
It remains to use the a priori estimates of Lemma~\ref{pri} to conclude the proof.
\end{proof}

To prove Proposition~\ref{error1}, we follow the approach from~\cite{BJ16}, and we introduce an additional auxiliary process,
\[
\widehat Y(t):=S(t)X_0+\int_{0}^tS(t-s)\Psi_{\delta t}(X^N(\lfloor s\rfloor_{\delta t}))ds
\]
for which the following auxiliary result is satisfied.

\begin{lm}\label{error0}
Assume that $d=1$, $Q=I$, $\|X_0\|_{L^{3q}}<\infty$, $p\ge q\ge 2$. Then for $0<\eta<1$, $s\ge \delta t$,
\begin{align}\label{lm-err}
\E\Big[\Big\|\widehat Y(s)+\omega(s)-X^N(\lfloor s \rfloor_{\delta t})\Big\|_{L^{q}}^p\Big]
\le C(T,p,\eta,X_0)(1+(\lfloor s \rfloor_{\delta t})^{-\eta p})(\delta t)^{\min(\frac 14,\eta)p}.
\end{align}
If assume in addition that $\|X_0\|_{\mathbb W^{\beta,q}}<\infty$, $\beta>0$, then  we have 
\begin{align}\label{lm-err1}
\E\Big[\Big\|\widehat Y(s)+\omega(s)-X^N(\lfloor s \rfloor_{\delta t})\Big\|_{L^{q}}^p\Big]
\le C(T,p,\eta,X_0)(1+(\lfloor s \rfloor_{\delta t})^{-\eta p})(\delta t)^{\min(\frac 14,\frac\beta 2+\eta)p}.
\end{align}
\end{lm}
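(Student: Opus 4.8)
The plan is to compare the three processes through their mild representations, cancel the common terms, and isolate the stochastic contribution, which will carry the $\delta t^{1/4}$ barrier. Writing $t_n=\lfloor s\rfloor_{\delta t}$ and inserting the definitions of $\widehat Y$, of $\omega$, and of the continuous interpolation $X^N$, the two stochastic convolutions $\omega(s)$ and $\omega(t_n)$ do \emph{not} cancel, and one is left with the decomposition
\begin{align*}
\widehat Y(s)+\omega(s)-X^N(t_n)
&=\big[S(s)-S(t_n)\big]X_0
+\int_{t_n}^{s}S(s-r)\Psi_{\delta t}(X^N(t_n))\,dr\\
&\quad+\int_0^{t_n}\big[S(s-r)-S(t_n-\lfloor r\rfloor_{\delta t})\big]\Psi_{\delta t}(X^N(\lfloor r\rfloor_{\delta t}))\,dr
+\big[\omega(s)-\omega(t_n)\big],
\end{align*}
which I would estimate term by term in $L^p(\Omega;L^q)$; the first three are pathwise and rely on the a priori moment bounds, while the last is treated directly through the Burkholder inequality.

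For the first term I would exploit the analytic smoothing of the semigroup: writing $S(s)-S(t_n)=(-A)^{\eta}S(t_n)\,(-A)^{-\eta}[S(s-t_n)-I]$ and combining $\|(-A)^{\eta}S(t_n)\|_{\LL(L^q)}\le C t_n^{-\eta}$ with $\|(-A)^{-\eta}[S(h)-I]\|_{\LL(L^q)}\le Ch^{\eta}$ produces exactly the factor $t_n^{-\eta}\delta t^{\eta}\|X_0\|_{L^q}$, i.e. the singular prefactor $(\lfloor s\rfloor_{\delta t})^{-\eta}$ and the rate $\delta t^{\eta}$ of~\eqref{lm-err}; inserting an additional $(-A)^{-\beta/2}$ when $X_0\in\mathbb W^{\beta,q}$ upgrades the rate to $\delta t^{\eta+\beta/2}$ and gives~\eqref{lm-err1}. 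The second term runs over an interval of length $<\delta t$, so the $L^q$-contractivity of $S$ bounds it by $\delta t\,\|\Psi_{\delta t}(X^N(t_n))\|_{L^q}$, which is $O(\delta t)$ once one controls the moments of $\|\Psi_{\delta t}(X^N(t_n))\|_{L^q}\le C(1+\|X^N(t_n)\|_{L^{3q}}^3)$ via the growth bound implied by Lemma~\ref{pre} and the a priori estimate of Lemma~\ref{pri} (this is where $\|X_0\|_{L^{3q}}<\infty$ is used). For the third term I note that on $[t_k,t_{k+1})$ the time arguments $s-r$ and $t_n-t_k$ differ by at most $\delta t$, so the temporal Hölder regularity of the analytic semigroup yields $\|[S(s-r)-S(t_n-t_k)]v\|_{L^q}\le C\delta t^{\theta}(t_n-r)^{-\theta}\|v\|_{L^q}$ for any $\theta<1$ (handling the terminal subinterval by the contraction bound); integrating the resulting integrable singularity gives $O(\delta t^{\theta})$, which is no worse than $\delta t^{1/4}$.

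The decisive term is the stochastic increment, which I would split as $\omega(s)-\omega(t_n)=\int_{t_n}^{s}S(s-r)\,dW^Q(r)+\int_0^{t_n}\big[S(s-r)-S(t_n-r)\big]dW^Q(r)$ and estimate through~\eqref{Burk} and the $\gamma$-radonifying bound~\eqref{Rad}. Using $S(t)e_k=e^{-\lambda_k t}e_k$, the uniform bound $\sup_k\|e_k\|_{\mathcal E}\le C$, and the one-dimensional heat-trace asymptotics $\sum_k e^{-\lambda_k t}\le Ct^{-1/2}$, the first integral is controlled by $\big(\int_{t_n}^{s}(s-r)^{-1/2}dr\big)^{1/2}\le C\delta t^{1/4}$, which is precisely where the rate $1/4$ is born. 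For the second integral I would write $S(s-r)-S(t_n-r)=S(t_n-r)[S(s-t_n)-I]$, use $(1-e^{-\lambda_k h})^2\le(\lambda_k h)^{2\alpha}$ with $h=s-t_n\le\delta t$ together with $\lambda_k^{2\alpha}e^{-\lambda_k(t_n-r)}\le C(t_n-r)^{-2\alpha}$ to get $\|S(t_n-r)[S(h)-I]\|_{\gamma(\HH,L^q)}^2\le C\delta t^{2\alpha}(t_n-r)^{-2\alpha-1/2}$; the radial integral converges exactly when $\alpha<1/4$, again giving $O(\delta t^{\alpha})$ with $\alpha$ arbitrarily close to $1/4$. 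Collecting the four contributions and taking the minimum of the exponents yields~\eqref{lm-err} and~\eqref{lm-err1}.

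I expect the main difficulty to lie in this last stochastic step: one must extract the \emph{sharp} exponent $1/4$ rather than a weaker power, and the balance in the semigroup-difference integral is delicate because the singularity $(t_n-r)^{-2\alpha-1/2}$ is integrable only for $\alpha<1/4$, so the $1/4$ barrier is intrinsic to this argument and cannot be improved. A secondary point demanding care is the interplay between the singular prefactor $(\lfloor s\rfloor_{\delta t})^{-\eta}$ arising from the low regularity of $X_0$ and the a priori moment estimates, together with ensuring that all constants are uniform in $\delta t$ and in $s\ge\delta t$; indeed, the restriction $s\ge\delta t$ is precisely what keeps $t_n^{-\eta}$ finite.
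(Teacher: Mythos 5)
Your decomposition and the estimates for the initial-data term, the drift terms, and the tail stochastic integral $\int_{t_n}^{s}S(s-r)\,dW(r)$ coincide with the paper's proof (the paper groups your four terms into $I_1$, $I_2$, $I_3$, but the content is identical: smoothing of $(-A)^{\eta}S(t_n)$ against $(-A)^{-\eta}(S(s-t_n)-I)$ for the prefactor $t_n^{-\eta}\delta t^{\eta}$, resp. $\delta t^{\eta+\beta/2}$ under $X_0\in\mathbb W^{\beta,q}$; contractivity plus the cubic growth of $\Psi_{\delta t}$ and Lemma~\ref{pri} for the drift; Burkholder in $L^q$ plus \eqref{Rad} and $\sup_k\|e_k\|_{\mathcal E}\le C$ for the noise).

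The one place where your argument falls short of the stated bound is the term $\int_0^{t_n}\bigl[S(s-r)-S(t_n-r)\bigr]dW(r)$. Your pointwise-in-$r$ estimate $\|S(t_n-r)(S(h)-I)\|^2_{\gamma(\HH,L^q)}\le C\,\delta t^{2\alpha}(t_n-r)^{-2\alpha-1/2}$ forces $\alpha<1/4$ for integrability and therefore only yields $\delta t^{\alpha p}$ for every $\alpha<1/4$ — not $\delta t^{p/4}$. Since the lemma asserts the rate $\min(\frac14,\eta)$ and $\eta$ may be taken $\ge\frac14$, this loses an epsilon relative to the claim. The fix is to integrate in $r$ before summing the spectral series:
\begin{align*}
\int_0^{t_n}\sum_{k}e^{-2\lambda_k(t_n-r)}\bigl(1-e^{-\lambda_k h}\bigr)^2dr
\le \sum_{k}\frac{\min(1,\lambda_k^2h^2)}{2\lambda_k}
\le C\,h^{\frac12}\qquad (d=1,\ \lambda_k\sim k^2),
\end{align*}
obtained by splitting the sum at $\lambda_k\sim h^{-1}$; raising to the power $p/2$ gives exactly $C\delta t^{p/4}$. (The paper's own intermediate display at this point, bounding $\sum_k\lambda_k^{1/2}e^{-2\lambda_k u}$ by $u^{-3/4}$, is also not literally correct in $d=1$, but the sharp $\frac14$ is recovered by the computation above.) With this adjustment your proof delivers \eqref{lm-err} and \eqref{lm-err1} as stated.
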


\begin{proof}[Proof of Lemma~\ref{error0}]
By the definition of $\widehat Y$ and $X^N$, we get,
for $s\ge \delta t$,
\begin{align*}
&\Big\|\widehat Y(s)+\omega(s)-X^N(\lfloor s \rfloor_{\delta t})\Big\|_{L^{q}}\\
&=\Big\|S(s)X_0-S(\lfloor s \rfloor_{\delta t})X_0\Big\|_{L^{q}}\\
&\quad+\Big\|\int_{0}^sS(s-r)\Psi_{\delta t}(X^N(\lfloor s\rfloor_{\delta t}))dr-\int_{0}^{\lfloor s\rfloor_{\delta t}}S(\lfloor s\rfloor_{\delta t}-\lfloor r\rfloor_{\delta t})\Psi_{\delta t}(X^N(\lfloor s\rfloor_{\delta t}))dr
\Big\|_{L^{q}} \\
&\quad+\Big\|\int_0^sS(s-r)dW^Q(r)-\int_{0}^{\lfloor s\rfloor_{\delta t}}S(\lfloor s\rfloor_{\delta t}-r)dW^Q(r)
\Big\|_{L^{q}}\\
&:=I_1+I_2+I_3.
\end{align*}
Thanks to the smoothing properties of  the semigroup $S(t)$, we have for arbitrary $\eta<1$,
\begin{align*}
I_1&\le\Big\|S(\lfloor s \rfloor_{\delta t})(S(s-\lfloor s \rfloor_{\delta t})-I)X_0\Big\|_{L^{q}} \\
&\le C\Big\|A^{\eta}S(\lfloor s \rfloor_{\delta t})\Big\|_{\LL({L^{q}},{L^{q}})}
\Big\|A^{-\eta}(S(s-\lfloor s \rfloor_{\delta t})-I)X_0\Big\|_{L^{q}}\\
&\le C
(\lfloor s \rfloor_{\delta t})^{-\eta }
\delta t^{\eta }\|X_0\|_{L^{q}}.
\end{align*}

Then we turn to estimate the term $I_2$, for $s\ge \epsilon$,
\begin{align*}
I_2
&\le 
\Big\|\int_{0}^{{\lfloor s\rfloor_{\delta t}}}\Big(S(s-r)-S(\lfloor s\rfloor_{\delta t}-\lfloor r\rfloor_{\delta t})\Big)\Psi_{\delta t}(X^N(\lfloor s\rfloor_{\delta t}))dr
\Big\|_{L^{q}}\\
&\quad+\Big\|\int_{{\lfloor s\rfloor_{\delta t}}}^sS(s-r)\Psi_{\delta t}(X^N(\lfloor s\rfloor_{\delta t}))dr
\Big\|_{L^{q}}\\
&\le \int_{0}^{{\lfloor s\rfloor_{\delta t}}}
\Big\| S(s-r)(S(r-{\lfloor r\rfloor_{\delta t}})-I)\Psi_{\delta t}(X^N(\lfloor s\rfloor_{\delta t}))\Big\|_{L^{q}}dr\\
&\quad+\int_{0}^{{\lfloor s\rfloor_{\delta t}}}
\Big\| S({\lfloor s\rfloor_{\delta t}}-{\lfloor r\rfloor_{\delta t}})(S(s-{\lfloor s\rfloor_{\delta t}})-I)\Psi_{\delta t}(X^N(\lfloor s\rfloor_{\delta t}))\Big\|_{L^{q}}dr\\
&\quad+C\delta t \sup_{s\in[0,T]}\Big\|\Psi_{\delta t}(X^N(\lfloor s\rfloor_{\delta t}))\Big\|_{L^{q}}.
\end{align*}
Similar to the estimate of $I_1$, combing with Lemma \ref{pre}, we obtain  for $0<\eta<1$,
\begin{align*}
I_2&
\le 
C(T)(\delta t^{\eta}+\delta t)(1+\sup_{n\le N}\|X^N(t_n)\|_{L^{3q}}^3).
\end{align*}
Thanks to the Burkholder inequality, Eq.~\eqref{Burk}, from Section~\ref{pre},
\begin{align*}
\E[I_3^{p}]
&\le C(p)\E\Big[\Big\|\int_0^{\lfloor s\rfloor_{\delta t}}\bigl(S(s-r)-S(\lfloor s\rfloor_{\delta t}-r)\bigr)dW(r)\Big\|_{L^{q}}^p\Big]\\
&\quad+C(p)\E\Big[\Big\|\int_{\lfloor s\rfloor_{\delta t}}^{s}S(s-r)dW(r)\Big\|_{L^{q}}^p\Big]\\
&\le C(p)\Big(\int_0^{\lfloor s\rfloor_{\delta t}}\Big\|S(s-r)-S(\lfloor s\rfloor_{\delta t}-r)\Big\|_{\gamma(\HH,L^{q})}^2dr\Big)^{\frac p2}\\
&\quad+C(p)\Big(\int_{\lfloor s\rfloor_{\delta t}}^{s}\Big\|S(s-r)\Big\|_{\gamma(\HH,L^{q})}^2dr\Big)^{\frac p2}.
\end{align*}
Thanks to Eq.~\eqref{Rad},
 \begin{align*}
 \|\phi \|_{\gamma(\HH, L^{q})}^2
\le C_q
 \Big\|\sum_{k\in \N^+} (\phi e_k)^2\Big\|_{L^{\frac {q}2}},
 \quad \phi \in \gamma(\HH, L^{q}).
 \end{align*}

Recall that it is assumed that $\underset{k\in\N^+}\sup~\|e_k\|_{\mathcal E}\le C<\infty$. Moreover, one has the following useful inequality: for any $\gamma>0$ and any $\alpha\in[0,1]$,
\[
\underset{r\in(0,\infty)}\sup~ \sum_{k\in\N^+}r^{\frac{1}{2}+\alpha}\lambda_k^\alpha e^{-\gamma\lambda_k r}=C(\gamma,\alpha)<\infty.
\]
Using these properties,

 \begin{align*}
 \E[I_3^{p}]
&\le C\Big(\int_0^{\lfloor s\rfloor_{\delta t}}\sum_{k\in \N^+}\Big\|S(\lfloor s\rfloor_{\delta t}-r)\bigl(S(s-\lfloor s\rfloor_{\delta t})-I\bigr)e_k \Big\|_{L^{q}}^2dr\Big)^{\frac p2}\\
&\quad+C\Big(\int_{\lfloor s\rfloor_{\delta t}}^s\sum_{k\in\N^+}\|S(s-r)e_k\|_{L^{q}}^2dr\Big)^{\frac p2}\\
&\le C\Big(\int_0^{\lfloor s\rfloor_{\delta t}}\sum_{k\in \N^+}e^{-2\lambda_k (\lfloor s\rfloor_{\delta t}-r)}
(e^{-\lambda_k (\lfloor s\rfloor_{\delta t}-s)}-1)^2dr\Big)^{\frac p2}\\
&+
C\Big(\int_{\lfloor s\rfloor_{\delta t}}^s\sum_{k\in\N^+}e^{-2\lambda_k (s-r)}dr\Big)^{\frac p2}\\
&\le C\Big(\int_0^{\lfloor s\rfloor_{\delta t}}\sum_{k\in \N^+}e^{-2\lambda_k (\lfloor s\rfloor_{\delta t}-r)}
\lambda_k^{\frac{1}{2}} \delta t^{\frac{1}{2}} dr\Big)^{\frac p2}+C\Big(\int_{\lfloor s\rfloor_{\delta t}}^s (s-r)^{-\frac{1}{2}}dr\Big)^{\frac p2}\\
&\le C\Big(\int_0^{\lfloor s\rfloor_{\delta t}} r^{-\frac{3}{4}}dr \delta t^{\frac{1}{2}}\Big)^{\frac{p}{2}}+C\delta t^{\frac{p}{4}}\le C\delta t^{\frac{p}{4}}.
\end{align*}

Combining the estimates of $I_1$, $I_2$ and $I_3$, we obtain for $s\ge \delta t$,
\begin{align*}
\E\Big[\Big\|\widehat Y(s)+\omega(s)-X^N(\lfloor s \rfloor_{\delta t})\Big\|_{L^{q}}^p\Big]
&\le C(p)\Big(\E[I_1^p]+\E[I_2^p]+\E[I_3^p]\Big)\\
&\le C(T,p,q,X_0)\Big((\lfloor s \rfloor_{\delta t})^{-\eta p}
\delta t^{\eta p}+\delta t^{\frac p4}\Big)\\
&\le C(T,p,q,X_0)(1+(\lfloor s \rfloor_{\delta t})^{-\eta p})\delta t^{\min(\frac 14,\eta)p},
\end{align*}
which shows the first assertion. 

If in addition we have $\|X_0\|_{\mathbb W^{\beta,q}}<\infty$, $\beta>0$, alternatively we have
\begin{align*}
I_1&\le\Big\|A^{\eta}S(\lfloor s \rfloor_{\delta t})\|_{\LL(L^q,L^q)}
\|A^{-\eta}(S(s-\lfloor s \rfloor_{\delta t})-I)X_0\Big\|_{L^{q}} \\
&\le C(\lfloor s \rfloor_{\delta t})^{-\eta }\delta t^{\eta+\frac \beta 2}\|X_0\|_{\mathbb W^{\beta, q}},
\end{align*}
where $\eta+\frac \beta 2\le 1$, $0<\eta<1$.
Combing the previous estimation on $I_2$ and $I_3$,
this concludes the proof.
\end{proof}

It now remains to prove Proposition~\ref{error1}, using Lemma~\ref{error0}.
\begin{proof}[Proof of Proposition~\ref{error1}]
We first show the estimation \eqref{main-err} with the rough initial datum $X_0\in L^{9q}$.
Due to the definition of $X^N$ and $Y^{\delta t}$, we have 
\begin{align*}
&\|X^N(t)-\omega(t)-Y^{\delta t}(t)\|_{L^q}\\
&=\Big\|\int_{0}^tS(t-\lfloor s\rfloor_{\delta t})
\Psi_{\delta t}(X^N(\lfloor s\rfloor_{\delta t}))ds
-
\int_{0}^tS(t-s)\Psi_{\delta t}(Y^{\delta t}(s)+\omega(s))ds\Big\|_{L^q}\\
&\le \Big\|\int_{0}^tS(t-\lfloor s\rfloor_{\delta t})
\Psi_{\delta t}(X^N(\lfloor s\rfloor_{\delta t}))ds
-
\int_{0}^tS(t-s)\Psi_{\delta t}(X^N(\lfloor s\rfloor_{\delta t}))ds\Big\|_{L^q}\\
&\quad+\Big\|\int_{0}^tS(t-s)
\Psi_{\delta t}(X^N(\lfloor s\rfloor_{\delta t}))ds
-
\int_{0}^tS(t-s)\Psi_{\delta t}(Y^{\delta t}(s)+\omega(s))ds\Big\|_{L^q}.
\end{align*}
The first term is controlled by the smoothing properties 
of $S(t)$ and the uniformly boundedness of $\Psi_{\delta t}(X^N(\lfloor s \rfloor_{\delta t}))$.
For $0<\eta_1 <1$, we have
\begin{align*}
&\Big\|\int_{0}^tS(t-\lfloor s\rfloor_{\delta t})
\Psi_{\delta t}(X^N(\lfloor s\rfloor_{\delta t}))ds
-
\int_{0}^tS(t-s)\Psi_{\delta t}(X^N(\lfloor s\rfloor_{\delta t}))ds\Big\|_{L^q}\\
&=
\Big\|\int_{0}^tA^{\eta_1}S(t-s)
A^{-\eta_1}(S(s-\lfloor s\rfloor_{\delta t})-I)
\Psi_{\delta t}(X^N(\lfloor s\rfloor_{\delta t}))ds
\Big\|_{L^q}\\
&\le
\int_{0}^t C(t-s)^{-\eta_1}(s-\lfloor s\rfloor_{\delta t})^{\eta_1}\Big\|\Psi_{\delta t}(X^N(\lfloor s\rfloor_{\delta t}))\Big\|_{L^q}ds\\
&\le C(\eta)\delta t^{\eta_1}\int_{0}^t(1+\|X^N(\lfloor s\rfloor_{\delta t})\|^3_{L^{3q}})ds. 
\end{align*}
We use the auxiliary process $\widehat Y$ and \eqref{lm-err} in Lemma \ref{error0} to deal with the second term  since
$$\Big\|\int_{0}^tS(t-s)
\Psi_{\delta t}(X^N(\lfloor s\rfloor_{\delta t}))ds
-
\int_{0}^tS(t-s)\Psi_{\delta t}(Y^{\delta t}(s)+\omega(s))ds\Big\|_{L^q}=\|Y^{\delta t}(t)-\widehat Y(t)\|_{L^q}.$$
 By 
 the one-sided Lipschitz continuity of $\Psi_{\delta t}$
, H\"older and Young inequality, we have for  $ \delta t\le t$, 
\begin{align*}
&\|Y^{\delta t}(t)-\widehat Y(t)\|_{L^q}^q\\
&=\|Y^{\delta t}(\delta t)-\widehat Y(\delta t)\|_{L^q}^q+q\int_{\delta t}^t\<(Y^{\delta t}(s)-\widehat Y(s))^{q-2}(Y^{\delta t}(s)-\widehat Y(s)), 
AY^{\delta t}(s)-
A\widehat Y(s)\>ds\\
&\quad +q\int_{\delta t}^t\<(Y^{\delta t}(s)-\widehat Y(s))^{q-2}(Y^{\delta t}(s)-\widehat Y(s)), 
\Psi_{\delta t}(Y^{\delta t}(s)+\omega(s))-\Psi_{\delta t}(X^N(\lfloor s\rfloor_{\delta t}))\>ds\\
&\le 
\|Y^{\delta t}(\delta t)-\widehat Y(\delta t)\|_{L^q}^q
+q\int_{\delta t}^t\Big\<(Y^{\delta t}(s)-\widehat Y(s))^{q-2}(Y^{\delta t}(s)-\widehat Y(s)), \\
&\quad\quad\quad
\Psi_{\delta t}(Y^{\delta t}(s)+\omega(s))
-\Psi_{\delta t}(\widehat Y(s)+\omega(s))\Big\>ds\\
&\quad+q\int_{\delta t}^t\<(Y^{\delta t}(s)-\widehat Y(s))^{q-2}(Y^{\delta t}(s)-\widehat Y(s)),
\Psi_{\delta t}(\widehat Y(s)+\omega(s))-\Psi_{\delta t}(X^N(\lfloor s\rfloor_{\delta t}))\>ds\\
&\le
\|Y^{\delta t}(\delta t)-\widehat Y(\delta t)\|^q
+C(q)\int_{\delta t}^t\|Y^{\delta t}(s)-\widehat Y(s)\|_{L^q}^qds\\
&\quad+\int_{\delta t}^t\Big\| \Psi_{\delta t}(\widehat Y(s)+\omega(s))-\Psi_{\delta t}(X^N(\lfloor s\rfloor_{\delta t})) \Big\|^q_{L^q}ds.
\end{align*}
Then Gronwall's inequality yields that for $t\ge \delta t$, 
\begin{align*}
\|Y^{\delta t}(t)-\widehat Y(t)\|_{L^q}^q
&\le e^{CT}\|Y^{\delta t}(\delta t)-\widehat Y(\delta t)\|_{L^q}^q\\
&\quad+ e^{CT}\int_{\delta t}^T\|\Psi_{\delta t}(\widehat Y (s)+\omega(s))-\Psi_{\delta t}(X^N(\lfloor s\rfloor_{\delta t})) \|_{L^q}^qds.
\end{align*}
Using Lemma 2.1 and H\"older inequality,
\begin{align*}
\|\Psi_{\delta t}(z_1)-\Psi_{\delta t}(z_2)\|_{L^{q}}
\le 
C\|z_1-z_2\|_{L^{3q}}(1+\|z_1\|_{L^{3q}}^2+\|z_2\|_{L^{3q}}^2)
\end{align*}	
leads that 
\begin{align*}
\|Y^{\delta t}(t)-\widehat Y(t)\|_{L^q}
&\le C(T,q)\Bigg( \|Y^{\delta t}(\delta t)-\widehat Y(\delta t)\|_{L^q}
+ \sup_{s\in[0,T]}\Big(1+ \|\widehat Y(s)\|_{L^{3q}}^{2}+
\|\omega(s)\|_{L^{3q}}^{2}\\
&\quad+\|X^N(\lfloor s \rfloor_{\delta t})\|_{L^{3q}}^{2}\Big) \Big(\int_{\delta t}^T
 \Big\|\widehat Y(s)+\omega(s)-X^N(\lfloor s \rfloor_{\delta t})\Big\|_{L^{3q}}^q\Big)^{\frac 1q}  ds\Bigg).
\end{align*}
Since 
for $t\le \delta t$,
\begin{align*}
\sup_{t\in[0,\delta t]}\|Y^{\delta t}(t)-\widehat Y(t)\|_{L^q}
&\le C\Big\|\int_{0}^{t} S(t-s)\Psi_{\delta t}(Y^{\delta t}(s)+\omega(s))ds\Big\|_{L^q}\\
&\quad+
C\Big\|\int_{0}^t S(t-s)\Psi_{\delta t}(X^N(\lfloor s\rfloor_{\delta t}))ds\Big\|_{L^q}\\
&\le 
C\delta t\sup_{s\in[0,T]}\Big(1+\|Y^{\delta t}(s)\|_{L^{3q}}^{3}
+\|\omega (s)\|_{L^{3q}}^{3}+\|X^N(s)\|_{L^{3q}}^{3}\Big).
\end{align*} 
Taking expectation, together with  the above results and a priori estimate in Lemma \ref{pri}, H\"older inequality and  Minkowski's inequality, leads that, for $p\ge q$, $\eta<\frac 1q$,
\begin{align*}
&\E \Big[\sup_{t\in[\delta t,T]}\|Y^{\delta t}(t)-\widehat Y(t)\|_{L^q}^p\Big]\\
&\le C(T,\eta,p,q)\Bigg(
\E\Big[ \|Y^{\delta t}(\delta t)-\widehat Y(\delta t)\|_{L^q}^p\Big]+ \E\Bigg[\sup_{s\in[0,T]}\Big(1+ \|\widehat Y(s)\|_{L^{3q}}^{2p}+
\|\omega(s)\|_{L^{3q}}^{2p}\\
&\quad+\|X^N(\lfloor s \rfloor_{\delta t})\|_{L^{3q}}^{2p}\Big)\Big(\int_{\delta t}^T 
 \Big\|\widehat Y(s)+\omega(s)-X^N(\lfloor s \rfloor_{\delta t})\Big\|_{L^{3q}}^q  ds\Big)^{\frac pq}\Bigg]\Bigg)\\
 &\le 
 C\delta t\E\Big[\sup_{s\in[0,T]}\Big(1+\|Y^{\delta t}(s)\|_{L^{3q}}^{3q}
+\|\omega (s)\|_{L^{3q}}^{3q}+\|X^N(s)\|_{L^{3q}}^{3q}\Big)\Big]\\
&\quad+C\E\Bigg[\sup_{s\in[0,T]}\Big(1+ \|\widehat Y(s)\|_{L^{3q}}^{2p}+
\|\omega(s)\|_{L^{3q}}^{2p}+
\|X^N(\lfloor s \rfloor_{\delta t})\|_{L^{3q}}^{2p}\Big)\\
&\quad\quad\Big(\int_{\delta t}^T 
 \Big\|\widehat Y(s)+\omega(s)-X^N(\lfloor s \rfloor_{\delta t})\Big\|_{L^{3q}}^q  ds\Big)^{\frac pq}\Bigg]\\
&\le C\delta t\E\Big[\sup_{s\in[0,T]}\Big(1+\|Y^{\delta t}(s)\|_{L^{3q}}^{3p}
+\|\omega (s)\|_{L^{3q}}^{3p}+\|X^N(s)\|_{L^{3q}}^{3p}\Big)\Big]\\
&\quad+C
\Big\|\sup_{s\in[0,T]}\Big(1+ \|\widehat Y(s)\|_{L^{3q}}^{2p}+C
\|\omega(s)\|_{L^{3q}}^{2p}+
\|X^N(\lfloor s \rfloor_{\delta t})\|_{L^{3q}}^{2p}\Big\|_{L^{2}(\Omega)}\\
&\quad\times\Big\|\Big(\int_{\delta t}^T 
 \Big\|\widehat Y(s)+\omega(s)-X^N(\lfloor s \rfloor_{\delta t})\Big\|_{L^{3q}}^q  ds\Big)^{\frac 1q}\Big\|_{L^{2p}(\Omega)}^p.
\end{align*}
By Minkowski's inequality and \eqref{lm-err} in Lemma \ref{error0}, 
we get for $\eta <\frac 1q$,
\begin{align*}
&\Big\|\Big(\int_{\delta t}^T 
 \Big\|\widehat Y(s)+\omega(s)-X^N(\lfloor s \rfloor_{\delta t})\Big\|_{L^{3q}}^q  ds\Big)^{\frac 1q}\Big\|_{L^{2p}(\Omega)}\\
&\le \Big(\int_{\delta t}^T\Big(\E\Big[ \Big\|\widehat Y(s)+\omega(s)-X^N(\lfloor s \rfloor_{\delta t})\Big\|_{L^{3q}}^{2p} \Big]\Big)^{\frac q {2p}} ds\Big)^{\frac 1q}\\
&\le C\Big(1+\Big(\int_{\delta t}^T \lfloor s \rfloor_{\delta t}^{-\eta q} ds\Big)^{\frac 1q}\Big)\delta t^{\min(\frac 14, \eta)}\le C(T,p,q,\|X_0\|_{L^{9q}})\delta t^{\min(\frac 14, \eta)},
\end{align*}
which combing with the above estimation, yields that 
\begin{align*}
\E \Big[\sup_{t\in[\delta t,T]}\|Y^{\delta t}(t)-\widehat Y(t)\|_{L^q}^p\Big]\le C(T,p,q,\|X_0\|_{L^{9q}})\delta t^{\min(\frac 14, \eta)}.
\end{align*}
By the continuity of $Y^{\delta t}(t)$ and $\widehat Y(t)$, together the above estimations, 
we have for 
\begin{align*}
\E \Big[\sup_{t\in[0,T]}\|Y^{\delta t}(t)-\widehat Y(t)\|_{L^q}^p\Big]&\le 
\E \Big[\sup_{t\in[\delta t,T]}\|Y^{\delta t}(t)-\widehat Y(t)\|_{L^q}^p\Big]\\
&\quad+\E \Big[\sup_{t\in[0,\delta t]}\|Y^{\delta t}(t)-\widehat Y(t)|_{L^q}^p\Big]\\
&\le C(T,p,q,\|X_0\|_{L^{9q}})\delta t^{\min(\frac 14, \eta)},
\end{align*}
which establishes the first assertion \eqref{main-err}.
For the estimation  \eqref{main-err1}, we  use \eqref{lm-err1} to estimate 
the term $\E \Big[\sup_{t\in[0,T]}\|Y^{\delta t}(t)-\widehat Y(t)\|_{L^q}^p\Big]$ and the arguments are similar. 
\
\end{proof}

By this variational approach, we can deduce that  if $d=1$, $Q=I$, $p\ge q=2m$, $m\in \N^+$, $\beta>0$,
 $\eta<\frac 1q$, $X_0\in \mathbb W^{\beta,q}\cap \mathcal E$.
Then the strong convergence rate result  still holds, i.e., 
\begin{align*}
\Big\|\sup_{t\in [0,T]} \|X^N(t)- X(t)\|_{L^q}\Big\|_{L^p(\Omega)} 
\le 
C(T, p,q, X_0)\delta t^{\min(\frac 14, \frac {\beta}2+\eta)},
\end{align*} 
by using the estimation 
\begin{align*}
\|\Psi_{\delta t}(z_1)-\Psi_{\delta t}(z_2)\|_{L^{q}}
\le 
C\|z_1-z_2\|_{L^{q}}(1+\|z_1\|_{\mathcal E}^2+\|z_2\|_{\mathcal E}^2)
\end{align*}
and the procedures of Theorem \ref{strong 1}.
This above result gives the answer to the problem about  the strong convergence rates of splitting schemes appeared in \cite{BG18}.


\begin{rk}
This above variational approach, combining with some 
further analysis on the discrete stochastic convolution, may also be available for 
obtaining the optimal strong convergence rates of other splitting schemes, such as the splitting exponential Euler scheme and the splitting implicit Euler scheme in \cite{BG18}. This extension will be studied in future works.
\end{rk}

 To conclude this section, we give extensions of Theorem~\ref{strong 1}, when Eq.~\eqref{ac} is driven by a $Q$-Wiener process, in dimension $d\le 3$. We only sketch the proofs of the parts which require nontrivial modifications. Note that the order of convergence depends on the H\"older regularity exponents for the process $X$.


\begin{cor}\label{com}
Let $d\le 3$, $p\ge q=2m$, $m\in \N^+$, $\beta_1>0$,
 $\eta<\frac 1q$. Assume that $X_0\in \mathbb W^{\beta_1,q}\cap \mathcal E$ and that the   operators $A$ and $Q$ satisfy: $Ae_k=-\lambda_k e_k$, $Qq_k=\sqrt{q_k}e_k$, $q_k>0$, $k\in \N^+$, with eigenfunctions such that
$\|e_k\|_{\mathcal E}\le C,$ $\|\nabla e_k\|\le C\lambda_k^{\frac 12}$. Suppose that $\sum\limits_{k\in \N^+} q_k\lambda_k^{2\beta-1}<\infty$, for some $0<\beta<\frac 12$.
Then  we have
\begin{align*}
\Big\|\sup_{t\in [0,T]} \|X^N(t)- X(t)\|_{L^q}\Big\|_{L^p(\Omega)} 
\le 
C(T, p,q, X_0)\delta t^{\min( \beta , \frac {\beta_1}2+\eta)}.
\end{align*} 
\end{cor}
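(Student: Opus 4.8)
The plan is to follow verbatim the error decomposition and the two-step variational argument behind Theorem~\ref{strong 1}, isolating the only two places where the passage from $d=1$, $Q=I$ to $d\le 3$ with a $Q$-Wiener process produces genuinely new terms. As before I would write
\[
\|X^N(t)-X(t)\|_{L^q}\le \|X^N(t)-X^{\delta t}(t)\|_{L^q}+\|Y^{\delta t}(t)-Y(t)\|_{L^q},
\]
and reduce the statement to the analogues of Proposition~\ref{error2} and Proposition~\ref{error1}. The crucial structural observation is that the monotonicity estimates of Lemma~\ref{pre} — the one-sided Lipschitz bound $(\Psi_{\delta t}(z_1)-\Psi_{\delta t}(z_2))(z_1-z_2)^{q-1}\le e^{C\delta t_0}|z_1-z_2|^q$ and the consistency bound $|\Psi_{\delta t}-\Psi_0|\le C\delta t(1+|z|^5)$ — are pointwise in space and hence dimension-free, so the entire Gronwall machinery transfers unchanged once the required moment and regularity inputs are available in $d\le 3$.

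First I would re-establish the a priori estimates of Lemma~\ref{pri} and the spatial regularity of Lemma~\ref{reg} in the present setting. Their proofs used monotonicity (unchanged) together with moment bounds for the stochastic convolution $\omega$; since the driving noise is now more regular, one checks via \eqref{Rad} that $\|S(\tau)Q\|_{\gamma(\HH,L^q)}^2\le C\sum_{k\in\N^+} q_k e^{-2\lambda_k\tau}$, which is integrable in $\tau$ under the hypothesis $\sum_{k} q_k\lambda_k^{2\beta-1}<\infty$ and yields $\omega\in L^p(\Omega;C([0,T];L^{3q}))$ uniformly; here $\|e_k\|_{\mathcal E}\le C$ controls the $L^{q/2}$-norm of $\sum_k q_k e^{-2\lambda_k\tau}e_k^2$, and $\|\nabla e_k\|\le C\lambda_k^{1/2}$ is invoked where gradient control is needed. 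With these inputs the analogue of Proposition~\ref{error2} gives $\|Y^{\delta t}-Y\|$ of order $\delta t$, which is dominated by $\delta t^{\min(\beta,\beta_1/2+\eta)}$ since $\beta<\frac12$.

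The substantive modification is in the analogue of Lemma~\ref{error0}, specifically the stochastic-convolution term $I_3$. Invoking \eqref{Burk}--\eqref{Rad} exactly as in the white-noise case, the short-time contribution reduces to $\int_{\lfloor s\rfloor_{\delta t}}^s\sum_k q_k\|S(s-r)e_k\|_{L^q}^2\,dr$. Using $\|e_k\|_{\mathcal E}\le C$ and the elementary bound $\int_0^{\delta t}e^{-2\lambda_k\tau}\,d\tau\le\min(\delta t,\lambda_k^{-1})\le \delta t^{2\beta}\lambda_k^{2\beta-1}$, valid for $0<\beta<\frac12$, this is at most $C\delta t^{2\beta}\sum_k q_k\lambda_k^{2\beta-1}\le C\delta t^{2\beta}$; the long-time contribution $\int_0^{\lfloor s\rfloor_{\delta t}}\sum_k q_k\|S(\lfloor s\rfloor_{\delta t}-r)(S(s-\lfloor s\rfloor_{\delta t})-I)e_k\|_{L^q}^2\,dr$ is treated identically after $\|(S(\delta t)-I)e_k\|\le C\delta t^{\beta}\lambda_k^{\beta}$, again producing $C\delta t^{2\beta}\sum_k q_k\lambda_k^{2\beta-1}$. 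Thus $\E[I_3^p]\le C\delta t^{\beta p}$, so the heat-trace exponent $\frac14$ of the white-noise case is replaced by $\beta$. The deterministic terms are unchanged: $I_2$ yields $\delta t^{\eta_1}$ for any $\eta_1<1$ (which I take $>\beta$, hence harmless), and $I_1$, now using $X_0\in\mathbb W^{\beta_1,q}$, gives $(\lfloor s\rfloor_{\delta t})^{-\eta}\delta t^{\eta+\beta_1/2}$. Hence the analogue of \eqref{lm-err1} reads $\E[\|\widehat Y(s)+\omega(s)-X^N(\lfloor s\rfloor_{\delta t})\|_{L^q}^p]\le C(1+(\lfloor s\rfloor_{\delta t})^{-\eta p})\delta t^{\min(\beta,\beta_1/2+\eta)p}$.

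Feeding this into the Gronwall argument of Proposition~\ref{error1} verbatim bounds the first error term by $\delta t^{\min(\beta,\beta_1/2+\eta)}$, and combining with the $\delta t$ bound on the second term yields the claim. I expect the main obstacle to be bookkeeping rather than conceptual: verifying that the a priori $L^{3q}$-moment and spatial-regularity estimates genuinely survive in $d\le 3$ under the stated summability hypothesis — in particular that the cubic nonlinearity remains controlled through $\omega\in\mathcal E$ — and checking that the precise exponent $\beta$ extracted from $\sum_k q_k\lambda_k^{2\beta-1}<\infty$ propagates consistently through every occurrence of the stochastic convolution. Everything else is a transcription of the monotonicity estimates, which are insensitive to dimension.
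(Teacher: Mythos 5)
Your proposal is correct and follows essentially the same route as the paper: the same decomposition into the two error terms, the same observation that the monotonicity machinery is dimension-free, and the same modification of the $I_3$ estimate, where the paper likewise bounds $\int_0^{\lfloor s\rfloor_{\delta t}}\sum_k q_k\bigl(e^{-\lambda_k(s-r)}-e^{-\lambda_k(\lfloor s\rfloor_{\delta t}-r)}\bigr)^2dr$ and the short-time integral by $C\sum_k q_k\lambda_k^{2\beta-1}\delta t^{2\beta}$ to obtain $\E[I_3^p]\le C\delta t^{\beta p}$. The only cosmetic difference is that the paper invokes \cite[Theorem 5.25]{DZ14} directly for the $\mathcal E$-moment bound on the stochastic convolution rather than rederiving it through \eqref{Rad}.
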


\begin{proof}
To prove that Lemma~\ref{pri} holds true, it is sufficient to check the estimate $\E[\sup\limits_{t\in[0,T]}\|\omega(t)\|^{p}_{\mathcal E}]\le C(T,p,Q)$ for $p\ge 1$. This is a consequence of~\cite[Theorem  5.25]{DZ14}.

It now remains to explain modifications concerning Lemma~\ref{error0}. More precisely, the control of the term $I_3$ is modified as follows:
\begin{align*}
\E[I_3^{p}]
&\le C(p)\E\Big[\Big\|\int_0^{\lfloor s\rfloor_{\delta t}}\bigl(S(s-r)-S(\lfloor s\rfloor_{\delta t}-r)\bigr)dW(r)\Big\|_{L^{q}}^p\Big]\\
&\quad+C(p)\E\Big[\Big\|\int_{\lfloor s\rfloor_{\delta t}}^{s}S(s-r)dW(r)\Big\|_{L^{q}}^p\Big]\\
&\le C\Big(\int_0^{\lfloor s\rfloor_{\delta t}}\sum_{k\in \N^+}(e^{-\lambda_k (s-r)} -e^{-\lambda_k (\lfloor s\rfloor_{\delta t}-r)})^2q_kdr\Big)^{\frac p2}+
C\Big(\int_{\lfloor s\rfloor_{\delta t}}^s\sum_{k\in\N^+}e^{-2\lambda_k (s-r)}q_kdr\Big)^{\frac p2}\\
&\le  C
\Big(\sum_{k\in\N^+}{\lambda_k^{-1}}(1-e^{-\lambda_k(s-\lfloor s\rfloor_{\delta t})})q_k\Big)^{\frac p2}
+C\Big(\sum_{k\in \N^+}\frac {q_k}{\lambda_k}(1-e^{-2\lambda_k(s-\lfloor s\rfloor_{\delta t})})\Big)^{\frac p2}\\
&\le 
C(\sum_{k\in \N^+}q_k\lambda_k^{2\beta-1})^{\frac p2}\delta t^{\beta p}.
\end{align*}
Applying the same techniques as above concludes the proof of Corollary~\ref{com}.
\end{proof}

\begin{cor}\label{gam}
 Let $d=1$, $\beta>0$ and $p\ge 2$. 
If $Q \in \LL_2^0$, $X_0\in \HH^{\beta}\cap \mathcal E$, then  there exists a constant $C=C(X_0,Q,T,p)$  such that
\begin{align*}
\Big\|\sup_{t \in [0,T]} \|X^N(t)- X(t)\|\Big\|_{L^p(\Omega)} 
\le 
C\delta t^{\frac 12}.
\end{align*} 
If $d=2,3$, $\|(-A)^{\frac 12}Q\|_{\LL^0_2}<\infty$,  $X_0\in \HH^{\beta}\cap\mathcal E$,   
then there exists a constant $C'=C'(X_0,Q,T,p)$  such that
\begin{align*}
\Big\|\sup_{t \in [0,T]} \|X^N(t)- X(t)\|\Big\|_{L^p(\Omega)} 
\le 
C'\delta t^{\frac 12}.
\end{align*}

\end{cor}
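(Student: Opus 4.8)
The plan is to run the variational argument of Theorem~\ref{strong 1} and Corollary~\ref{com} and to recognize the statement as the endpoint case $\beta=\frac12$ of Corollary~\ref{com}, the target being the $\HH=L^2$ norm, so that $q=2$ and $\eta<\frac12$ throughout. First I would split
\[
\|X^N(t)-X(t)\|\le \|X^N(t)-X^{\delta t}(t)\|+\|Y^{\delta t}(t)-Y(t)\|
\]
and bound the second term by $C\delta t$ using Proposition~\ref{error2}, whose proof relies only on the estimates of Lemma~\ref{pre} and on a priori moments, and hence transfers unchanged to the $Q$-Wiener setting once the analogue of Lemma~\ref{pri} is secured. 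For the first term I would reproduce the Gronwall argument of Proposition~\ref{error1} through the auxiliary process $\widehat Y$, reducing everything to sharpening the terms $I_1,I_2,I_3$ of Lemma~\ref{error0}.

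The decisive point is the recomputation of the stochastic term $I_3$, carried out as in Corollary~\ref{com} but at the boundary exponent $\beta=\frac12$. In both regimes $\sum_{k\in\N^+}q_k<\infty$: for $d=1$ this is $Q\in\LL^0_2$, while for $d=2,3$ it follows from $\|(-A)^{1/2}Q\|_{\LL^0_2}^2=\sum_k\lambda_k q_k<\infty$ because $\lambda_k\to\infty$. Inserting $1-e^{-\lambda_k\delta t}\le\lambda_k\delta t$ into the estimate from Corollary~\ref{com} gives
\[
\E[I_3^p]\le C\Big(\sum_{k\in\N^+}\lambda_k^{-1}(1-e^{-\lambda_k\delta t})q_k\Big)^{p/2}+\cdots\le C\Big(\sum_{k\in\N^+}q_k\Big)^{p/2}\delta t^{p/2},
\]
so that $I_3=\OOO(\delta t^{1/2})$. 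The other two terms are kept strictly subcritical by the initial regularity: with $\eta<\frac12$ chosen close to $\frac12$, the bound $I_1\le C(\lfloor s\rfloor_{\delta t})^{-\eta}\delta t^{\eta+\beta/2}\|X_0\|_{\HH^\beta}$ of~\eqref{lm-err1} has exponent $\eta+\frac\beta2>\frac12$ for every $\beta>0$, while the deterministic term $I_2$ carries no time-singularity and may be estimated with an exponent arbitrarily close to $1$. Feeding the resulting bound of Lemma~\ref{error0}, now of order $\delta t^{1/2}$ and weighted by $(1+(\lfloor s\rfloor_{\delta t})^{-\eta p})$, into the Gronwall step and integrating the singularity---which is admissible precisely because $\eta q<1$---yields $\|X^N-X^{\delta t}\|\le C\delta t^{1/2}$; combined with the $\OOO(\delta t)$ bound on $\|Y^{\delta t}-Y\|$ this gives the assertion.

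The main obstacle is to establish the a priori estimates of Lemma~\ref{pri} when $d=2,3$, on which every step above rests. Following the proof of Corollary~\ref{com}, the only nontrivial ingredient is $\E[\sup_{t\in[0,T]}\|\omega(t)\|_{\mathcal E}^p]<\infty$; here I would exploit $\|(-A)^{1/2}Q\|_{\LL^0_2}<\infty$ to place the stochastic convolution in $\HH^{2}$, through the factorization method of Lemma~\ref{frac} and the trace bound $\sum_k\lambda_k q_k<\infty$, and then invoke the Sobolev embedding $\HH^{2}\hookrightarrow\mathcal E$, valid for $d\le3$, together with~\cite[Theorem~5.25]{DZ14}. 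This is exactly where the stronger noise hypothesis for $d=2,3$ enters, and it is also the mechanism that confines the order-one analysis of Section~\ref{sec-4} to dimension one.
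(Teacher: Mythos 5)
Your proposal is correct and follows essentially the same route as the paper: both reduce the corollary to the variational argument of Theorem~\ref{strong 1}, identify the only two ingredients that must be upgraded as the bound $\E[\sup_{t}\|\omega(t)\|_{\mathcal E}^p]<\infty$ and the stochastic term $I_3$ of Lemma~\ref{error0}, and show $I_3=\OOO(\delta t^{1/2})$ while the terms $I_1,I_2$ stay strictly above order $\frac12$ thanks to $X_0\in\HH^\beta$ and $\eta<\frac12$. Two small imprecisions are worth flagging. First, you run the $I_3$ computation through the diagonal formula of Corollary~\ref{com} (i.e.\ $Qe_k=\sqrt{q_k}e_k$), whereas Corollary~\ref{gam} assumes only $Q\in\LL_2^0$ (resp.\ $\|(-A)^{1/2}Q\|_{\LL_2^0}<\infty$); the paper avoids this by the operator-norm factorization $\|(-A)^{-1/2}(S(s-\lfloor s\rfloor_{\delta t})-I)\|^2\,\|(-A)^{1/2}S(\lfloor s\rfloor_{\delta t}-r)Q\|_{\LL_2^0}^2$, which gives $C\delta t$ after integrating in $r$ without any diagonality assumption --- you should either adopt that or justify the reduction to the diagonal case. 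Second, for $d=2,3$ you cannot place $\sup_{t\in[0,T]}\|\omega(t)\|_{\HH^2}$ in $L^p(\Omega)$ by factorization: Lemma~\ref{frac} requires $\gamma>r+\frac1p$ while integrability of $s^{-2\gamma}\|S(s)(-A)^{1/2}Q\|^2_{\LL_2^0}$ forces $\gamma<\frac12$, so one only reaches $\HH^{1+2\beta}$ with $\beta<\frac12$; this still suffices since $\HH^{1+2\beta}\hookrightarrow\mathcal E$ for $1+2\beta>\frac d2$ when $d\le3$, which is exactly what the paper does.
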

\begin{proof}
We first show the first assertion.
The assumptions ensures that the method to obtain the 
strong convergence rates of the splitting scheme in the 
case $Q=I$ is
also available for the case $Q\in \LL^0_2$.
We only need to show that the a priori estimate of $\omega$, and $I_3$ possess higher convergence speed than the case $Q=I$.
The Sobolev embedding theorem, the regularity result of stochastic convolution in \cite[Theorem 5.15]{DZ14} and Burkerholder inequality yield that for $p\ge 2$, there exsits $\frac 14<\beta<\frac 12$ such that
\begin{align*}
\E[\sup_{t\in[0,T]}\|\omega(t)\|_{\mathcal E}^p]\le \E[\sup_{t\in[0,T]}\|(-A)^{\beta}\omega(t)\|^p]\le C(Q,T,p),
\end{align*}
and 
\begin{align*}
\E[I_3^{p}]
&\le C(p)\E\Big[\Big\|\int_0^{\lfloor s\rfloor_{\delta t}}\bigl(S(s-r)-S(\lfloor s\rfloor_{\delta t}-r)\bigr)dW^Q(r)\Big\|^p\Big]\\
&\quad+C(p)\E\Big[\Big\|\int_{\lfloor s\rfloor_{\delta t}}^{s}S(s-r)dW^Q(r)\Big\|^p\Big]\\
&\le C(p)\E\Big[\Big(\int_0^{\lfloor s\rfloor_{\delta t}}
\Big\|(-A)^{-\frac 12}(S(s-\lfloor s\rfloor_{\delta t})-I)\Big\|^2\Big\|(-A)^{\frac 12}S(\lfloor s\rfloor_{\delta t}-r)Q\Big\|_{\LL^0_2}^2dr\Big)^{\frac p2}\Big]\\
&\quad+C(p)\E\Big[\Big(\int_{\lfloor s\rfloor_{\delta t}}^{s}\Big\|S(s-r)Q\Big\|^2dr\Big)^{\frac p2}\Big] \le C(Q)\delta t^{\frac p2}.
\end{align*}
The above properties, combined with the procedures in
the proof of Theorem \ref{strong 1} shows the first assertion.

Denote $W_{\gamma}=\int_{0}^t(t-s)^{-\gamma}S(t-s)(-A)^{\frac 12}dW^Q(s)$.
When $d=2,3$, $\|(-A)^{\frac 12}Q\|_{\LL^0_2}<\infty$,
Sobolev embedding theorem $\HH^{1+2\beta}\hookrightarrow \mathcal E$, $\frac 14<\beta<\frac 12$, together with the  the fractional method and Lemma \ref{frac}, yields that for $p> 2$, $\frac 14<\beta<\frac 12$, $\frac 12>\gamma>
\beta+\frac 1{p}$,
\begin{align*}
\E\Big[\sup_{s\in[0,T]}\|\omega(s)\|_{\mathbb E}^{p}\Big]
&\le \E\Big[\sup_{s\in [0,T]}\|\omega(s)\|_{\HH^{1+2\beta}}^{p}\Big]\le C \E\Big[\sup_{s\in [0,T]}\|G_{\gamma}W_{\gamma}(s)\|_{\HH^{2\beta}}^{p}\Big]\\
&\le C \int_0^T \E\Big[\|W_{\gamma}(s)\|^{p}_{\HH^{2\beta}}\Big]ds\\
&\le C\Big(\int_0^Ts^{-2\gamma}\|S(s)(-A)^{\frac 12}Q\|_{\LL_2^0}^2ds\Big)^{\frac p2}\le C(T,Q,p).
\end{align*}
Combining with the continuity of stochastic convolution
\begin{align*}
\E[I_3^{p}]
&\le C(p)\E\Big[\Big(\int_0^{\lfloor s\rfloor_{\delta t}}\Big\|(-A)^{-\frac 12}(S(s-\lfloor s\rfloor_{\delta t})-I)\Big\|^2 \Big\|(-A)^{\frac 12}S(\lfloor s\rfloor_{\delta t}-r)Q\Big\|_{\LL_2^0}^2dr\Big)^{\frac p2}\Big]\\
&\quad+C(p)\E\Big[\Big(\int_{\lfloor s\rfloor_{\delta t}}^{s}\Big\|S(s-r)Q\Big\|^2dr\Big)^{\frac p2}\Big] \le C(Q)\delta t^{\frac p2}.
\end{align*}
The a priori estimate of $\E[\sup\limits_{t\in[0,T]}\|\omega(t)\|_{\mathcal E}^p]$ and some procedures in the proof of Theorem \ref{strong 1}, we get the second assertion. 
\end{proof}

\section{ Higher strong convergence rate using exponential integrability properties (regular noise, dimension $1$)}\label{sec-4}

 This section is devoted to two contributions. First, we investigate exponential integrability properties of the exact and numerical solutions $X$ and $X^N$, in dimension $d=1,2,3$. We also derive useful a priori estimates in the $\mathbb{H}^2$ norm. This requires additional regularity conditions on the operator $Q$, and the initial condition $X_0$: it is assumed that $\|(-A)^{\frac 12}Q\|_{\LL^0_2}<\infty $ and $X_0 \in \HH^2$. Second, we prove that the splitting scheme, Eq.~\eqref{spl}, in the one-dimensional case $d=1$, has a strong order of convergence equal to $1$. Note that this higher order of convergence may be obtained since the stochastic convolution is not discretized. Up to our knowledge, this is the first proof that a temporal discretization scheme has a strong order of convergence equal to $1$ for the stochastic Allen-Cahn equation.

Like in Section~\ref{sec-3}, it is assumed for simplicity that the initial condition $X_0$ is deterministic. The extension of the results below to random $X_0$ is straightforward under appropriate assumptions: for instance, conditions of the type $\E[
e^{c\|X_0\|^2_{\HH^1} }]<\infty$ for some $c<\infty$ are required when studying exponential integrability properties.

\subsection{A priori estimates and exponential integrability properties}

In order to show  an improved strong error estimate,  with order $1$ in some cases, we need to prove additional a priori estimates, and to study the exponential integrability properties for $d=1,2,3$, in some well-chosen Banach spaces.

We first state the following result. The proof is standard, using It\^o's formula and the one-sided Lipschitz condition on $F$, and it is thus left to the interested readers.

\begin{lm}\label{reg-tra}
Assume that $d\le 3$, $\|(-A)^{\frac 12}Q\|_{\LL^0_2}<\infty $ and $X_0\in \HH^1$. Let $p\ge 1$. Then the solution $X$ of Eq.~\eqref{ac} satisfies the a priori estimates
\begin{align*}
\E \Big[\Big(\sup_{t\in[0,T]}\|X(t)\|^2+\int_{0}^{T}\|X(t)\|^2_{\HH^1}dt+\int_{0}^{T}
\|X(t)\|_{L^4}^4dt\Big)^p\Big]
\le C(X_0,T,Q,p)
\end{align*}
and 
\begin{align*}
\E \Big[\Big(\sup_{t\in [0,T]}\|X(t)\|^2_{\HH^1}+\int_{0}^{T}\|X(t)\|^2_{\HH^2}ds\Big)^p\Big]
\le C(X_0,T,Q,p).
\end{align*}
\end{lm}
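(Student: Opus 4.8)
The plan is to obtain both estimates from It\^o's formula applied to the squared norms $\|X(t)\|^2$ and $\|X(t)\|^2_{\HH^1}$, extracting dissipation from the linear operator $A$ and exploiting the favorable sign of the cubic part of $F$. Since both functionals involve unbounded operators and $X$ is a priori only a mild solution, I would carry out the formal computations rigorously on spectral Galerkin approximations $X^{(n)}$ (projections onto $\mathrm{span}\{e_1,\dots,e_n\}$), derive the bounds with constants independent of $n$, and pass to the limit, using localization by stopping times $\tau_R=\inf\{t:\|X(t)\|_{\HH^1}>R\}$ to secure the a priori finiteness needed in the absorption arguments.

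\emph{First estimate.} Applying It\^o's formula to $\|X(t)\|^2$ and using $\langle X,AX\rangle=-\|X\|^2_{\HH^1}$ and $\langle X,F(X)\rangle=\|X\|^2-\|X\|^4_{L^4}$, together with the trace term (finite since $\|Q\|_{\LL^0_2}\le \lambda_1^{-1/2}\|(-A)^{\frac12}Q\|_{\LL^0_2}$), I obtain the energy identity
\begin{align*}
\|X(t)\|^2+2\int_0^t\|X(s)\|^2_{\HH^1}ds+2\int_0^t\|X(s)\|^4_{L^4}ds=\|X_0\|^2+2\int_0^t\|X(s)\|^2ds+\|Q\|^2_{\LL^0_2}t+M_t,
\end{align*}
with $M_t=2\int_0^t\langle X(s),dW^Q(s)\rangle$. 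Since $\OO$ is bounded, Young's inequality gives $2\|X\|^2\le \|X\|^4_{L^4}+C$, which absorbs the lower-order term into the $L^4$-dissipation. Raising to the power $p$, taking the supremum in $t$ and then expectation, controlling $M$ by the Burkholder--Davis--Gundy inequality via $\langle M\rangle_t\le 4\|Q\|^2_{\LL^0_2}\int_0^t\|X\|^2ds$, and finally using a Young inequality to absorb $\E[\sup_t\|X\|^{2p}]$ into the left-hand side yields the first estimate.

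\emph{Second estimate.} Here I would apply It\^o's formula to $\|X(t)\|^2_{\HH^1}=\|(-A)^{\frac12}X\|^2$; the trace term is now $\|(-A)^{\frac12}Q\|^2_{\LL^0_2}$, precisely the quantity assumed finite, and the linear part produces the needed dissipation $-2\|X\|^2_{\HH^2}$. The crucial point is the nonlinear term, where integration by parts against the Dirichlet Laplacian gives
\begin{align*}
2\langle(-A)X,F(X)\rangle=2\|X\|^2_{\HH^1}-2\langle(-A)X,X^3\rangle=2\|X\|^2_{\HH^1}-6\int_{\OO}X^2|\nabla X|^2\,dx\le 2\|X\|^2_{\HH^1},
\end{align*}
so the cubic contribution carries the good sign and may be dropped (it is here that $d\le 3$ enters, through $\HH^1\hookrightarrow L^6$, to give meaning to $X^3$). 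After integration this leads to
\begin{align*}
\|X(t)\|^2_{\HH^1}+2\int_0^t\|X(s)\|^2_{\HH^2}ds\le \|X_0\|^2_{\HH^1}+2\int_0^t\|X(s)\|^2_{\HH^1}ds+\|(-A)^{\frac12}Q\|^2_{\LL^0_2}t+\widetilde M_t.
\end{align*}
The term $\int_0^t\|X\|^2_{\HH^1}ds$ is not absorbed but controlled directly, since the first estimate already bounds $\E[(\int_0^T\|X\|^2_{\HH^1}ds)^p]$; likewise the martingale $\widetilde M$ satisfies $\langle\widetilde M\rangle_T\le 4\|(-A)^{\frac12}Q\|^2_{\LL^0_2}\int_0^T\|X\|^2_{\HH^1}ds$, again controlled by the first estimate through BDG. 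Taking the supremum, the $p$-th power and expectation then closes the argument.

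The main obstacle will be the rigorous justification of It\^o's formula for $\|X\|^2_{\HH^1}$ and of the integration by parts in $\langle(-A)X,X^3\rangle$, since the mild solution need not lie in $\HH^2$ a priori. I would resolve this through the Galerkin scheme, where all quantities are finite and the dissipative sign of the integration-by-parts term is preserved, and then transfer the uniform-in-$n$ bounds to $X$ by lower semicontinuity of the norms; everything else reduces to the standard energy, Young and Burkholder--Davis--Gundy estimates sketched above.
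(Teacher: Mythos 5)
Your proof is correct and follows exactly the route the paper indicates but does not write out: the paper dismisses this lemma with ``the proof is standard, using It\^o's formula and the one-sided Lipschitz condition on $F$,'' and your energy identities for $\|X\|^2$ and $\|X\|^2_{\HH^1}$, the sign of $-6\int_{\OO}X^2|\nabla X|^2$, the BDG/Young absorption, and the Galerkin-plus-stopping-time justification are precisely that standard argument, correctly executed (including the observation that the second estimate needs no absorption since $\E[(\int_0^T\|X\|^2_{\HH^1}ds)^p]$ is already controlled by the first). No gaps.
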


To show the exponential integrability of $X$, we  quote an exponential  integrability lemma, see \cite[Lemma 3.1]{CHL16b}, see also~\cite{CHJ13} for similar results.

\begin{lm}\label{lm-exp}
Let $H$ be a Hilbert space, and let $X$ be an $H$-valued adapted stochastic process with continuous sample paths, satisfying $X_t=X_0+\int_0^t \mu(X_r)dr +\int_0^t \sigma(X_r)dW_r$ for all $t\in[0,T]$, where almost surely
$\int_0^T(\|\mu(X_t)\|+\|\sigma(X_t)\|^2)dt<\infty$.

Assume that there exist two functionals $\overline U$ and $ U \in \mathcal C^2(H;R)$, and  $\alpha\ge 0$, such that for all $t\in [0,T]$

\begin{align*}
&DU(x)\mu(x)
+\frac{{\rm tr}\big[D^2U(x)\sigma(x)\sigma^*(x)\big]}2 +\frac{\|\sigma^*(x) DU(x)\|^2}{2e^{\alpha t}}+\overline U(x)
\le \alpha U(x).
\end{align*}
Then 
\begin{align*}
\sup_{t\in [0,T]}\E \bigg[\exp\bigg( \frac {U(X_t)}{e^{\alpha t}}+\int_0^t\frac {\overline U(X_r)}{e^{\alpha r}}dr \bigg)\bigg]
\le e^{U(X_0)}.
\end{align*}
\end{lm}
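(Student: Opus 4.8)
The plan is to apply It\^o's formula to the process $t\mapsto \exp\!\big(e^{-\alpha t}U(X_t)\big)$ and to show that, after incorporating the running term $\int_0^t e^{-\alpha r}\overline U(X_r)\,dr$, the resulting process is a supermartingale. First I would fix $t\in[0,T]$ and introduce the auxiliary process
\begin{align*}
M_t:=\exp\bigg(\frac{U(X_t)}{e^{\alpha t}}+\int_0^t\frac{\overline U(X_r)}{e^{\alpha r}}\,dr\bigg),
\end{align*}
whose expectation we want to bound by $e^{U(X_0)}$. The strategy is to compute the It\^o differential of $\log M_t$ first, since $\log M_t=e^{-\alpha t}U(X_t)+\int_0^t e^{-\alpha r}\overline U(X_r)\,dr$ is more convenient, and then apply It\^o's formula a second time to $\exp(\cdot)$.

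Carrying this out, the time-dependent prefactor $e^{-\alpha t}$ contributes a term $-\alpha e^{-\alpha t}U(X_t)$; It\^o's formula applied to $U(X_t)$ (using the SDE $dX_t=\mu(X_t)\,dt+\sigma(X_t)\,dW_t$) produces the drift $e^{-\alpha t}\big(DU(X_t)\mu(X_t)+\tfrac12\mathrm{tr}[D^2U(X_t)\sigma(X_t)\sigma^*(X_t)]\big)$ together with the martingale part $e^{-\alpha t}\langle\sigma^*(X_t)DU(X_t),dW_t\rangle$; and the running integral contributes $e^{-\alpha t}\overline U(X_t)\,dt$. When I then apply It\^o's formula to $M_t=\exp(\log M_t)$, the exponential's second-order term generates the quadratic variation contribution $\tfrac12 e^{-2\alpha t}\|\sigma^*(X_t)DU(X_t)\|^2$. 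Collecting the drift of $M_t$, I obtain $M_t$ multiplied by
\begin{align*}
e^{-\alpha t}\Big(DU(X_t)\mu(X_t)+\tfrac12\mathrm{tr}[D^2U(X_t)\sigma(X_t)\sigma^*(X_t)]+\frac{\|\sigma^*(X_t)DU(X_t)\|^2}{2e^{\alpha t}}+\overline U(X_t)-\alpha U(X_t)\Big),
\end{align*}
and the hypothesis of the lemma makes exactly this bracket nonpositive. Hence the drift of $M_t$ is $\le 0$, so $M_t$ is a local supermartingale; $M_0=e^{U(X_0)}$, and taking expectations (after a localization step) yields $\E[M_t]\le e^{U(X_0)}$ for every $t$, which is the claim.

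The main obstacle is the integrability/localization issue: $M_t$ is only shown to be a \emph{local} supermartingale, so one must introduce stopping times $\tau_n$ reducing the stochastic integral to a genuine martingale, apply the supermartingale property along $t\wedge\tau_n$, and then pass to the limit. Because the bracket is nonpositive the drift only helps, so Fatou's lemma gives $\E[M_t]\le\liminf_n\E[M_{t\wedge\tau_n}]\le e^{U(X_0)}$ without needing uniform integrability; the a priori assumption $\int_0^T(\|\mu(X_t)\|+\|\sigma(X_t)\|^2)\,dt<\infty$ guarantees the It\^o formula applies and the local martingale is well defined. I would also note that $U,\overline U\in\mathcal C^2$ and the continuity of sample paths are exactly what is needed to justify the two applications of It\^o's formula; the finite-dimensional statement of It\^o's formula suffices if one works in a Galerkin projection and passes to the limit, but since $H$ is a general Hilbert space I would invoke the infinite-dimensional It\^o formula directly under the stated regularity.
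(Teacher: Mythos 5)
Your argument is correct, and it is essentially the standard proof of this lemma: the paper itself gives no proof but quotes the result from \cite{CHL16b} (Lemma 3.1 there), where exactly this exponential-supermartingale computation is carried out --- It\^o's formula applied to $e^{-\alpha t}U(X_t)$ plus the running integral, a second application to the exponential producing the quadratic-variation term $\tfrac12 e^{-2\alpha t}\|\sigma^*DU\|^2$, the hypothesis killing the drift, and localization plus Fatou (valid since the exponential is nonnegative) to pass from a local supermartingale to the bound $\E[M_t]\le e^{U(X_0)}$. The only points worth making explicit are the ones you already flag: the stopping times should reduce the stochastic integral $\int_0^\cdot M_s e^{-\alpha s}\langle\sigma^*(X_s)DU(X_s),dW_s\rangle$ to a true martingale, and in infinite dimensions the It\^o formula is justified either directly under the stated $\mathcal C^2$ regularity or via a Galerkin approximation as in the cited reference; note also that only $U$, not $\overline U$, needs to be $\mathcal C^2$, which your proof respects since It\^o is applied to $U$ alone.
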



 We are now in position to state a first exponential integrability result, which will be improved below. For the reader's convenience, we omit standard truncations and regularization procedures.

\begin{prop}\label{exp-prop}
Let $d\le 3$, and assume that $\|(-A)^{\frac 12}Q\|_{\LL^0_2}<\infty $ and $X_0\in \HH^1$. Then for any $\rho,\rho_1>0$,
there exist  $\alpha=\lambda(\rho,Q)\in (0,\infty)$ and $\alpha_1=\lambda(\rho_1,Q)\in (0,\infty)$, such that
\begin{align*}
&\E \Big[\exp\Big(e^{-\alpha t}\rho\|X(t)\|^2+2\rho\int_{0}^{t}e^{-\alpha s} \|\nabla X(s)\|^2ds+2\rho \int_{0}^{t}
e^{-\alpha s} \|X(s)\|_{L^4}^4ds\Big)\Big]
\le  e^{\rho\|X_0\|^2}.
\end{align*}
and
\begin{align*}
&\E \Big[\exp\Big(\Big(e^{-\alpha_1 t}\rho_1\|\nabla X(t)\|^2+2\rho_1\int_{0}^{t}e^{-\alpha_1 s} \|AX(s)\|^2ds\Big)\Big)\Big]\le e^{\rho_1\|\nabla X_0\|^2}.
\end{align*}
\end{prop}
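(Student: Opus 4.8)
The plan is to apply the abstract exponential integrability criterion of Lemma~\ref{lm-exp} twice, once for each of the two asserted bounds, by choosing the right Lyapunov-type functionals $U$ and $\overline U$ and verifying the pointwise differential inequality
\begin{align*}
DU(x)\mu(x)+\tfrac12{\rm tr}\big[D^2U(x)\sigma(x)\sigma^*(x)\big]+\tfrac{\|\sigma^*(x)DU(x)\|^2}{2e^{\alpha t}}+\overline U(x)\le \alpha U(x)
\end{align*}
for suitable $\alpha>0$. Here the drift and diffusion of Eq.~\eqref{ac} are $\mu(x)=Ax+F(x)=Ax+x-x^3$ and $\sigma=Q$ (constant, additive noise). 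For the first estimate I would take $U(x)=\rho\|x\|^2$ and $\overline U(x)=2\rho\|\nabla x\|^2+2\rho\|x\|_{L^4}^4$; for the second I would take $U(x)=\rho_1\|\nabla x\|^2=\rho_1\|(-A)^{1/2}x\|^2$ and $\overline U(x)=2\rho_1\|Ax\|^2$, matching exactly the quantities appearing inside the exponentials.

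First I would compute the derivatives of $U$. For $U(x)=\rho\|x\|^2$ one has $DU(x)=2\rho x$, $D^2U(x)=2\rho I$, so $DU(x)\mu(x)=2\rho\<x,Ax\>+2\rho\<x,x-x^3\>=-2\rho\|\nabla x\|^2+2\rho\|x\|^2-2\rho\|x\|_{L^4}^4$ after integrating $\<x,Ax\>=-\|\nabla x\|^2$ by parts (using the homogeneous Dirichlet boundary condition). The trace term is $\rho\,{\rm tr}[QQ^*]=\rho\|Q\|_{\LL^0_2}^2$, and the ``martingale'' term $\frac{\|Q^*DU(x)\|^2}{2e^{\alpha t}}=\frac{2\rho^2\|Q^*x\|^2}{e^{\alpha t}}\le 2\rho^2\|Q\|^2\|x\|^2$ is controlled by $C\rho^2\|x\|^2$. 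Adding $\overline U(x)=2\rho\|\nabla x\|^2+2\rho\|x\|_{L^4}^4$ the gradient and $L^4$ terms cancel exactly, leaving the left-hand side bounded by $(2\rho+C\rho^2)\|x\|^2+\rho\|Q\|_{\LL^0_2}^2$. Choosing $\alpha=\alpha(\rho,Q)$ large enough that $\alpha\rho\ge 2\rho+C\rho^2$ and absorbing the additive constant $\rho\|Q\|_{\LL^0_2}^2$ into the same inequality (possibly enlarging $\alpha$, using $\|Q\|_{\LL^0_2}\le C\|(-A)^{1/2}Q\|_{\LL^0_2}<\infty$) gives the required inequality, and Lemma~\ref{lm-exp} yields the first bound with initial value $e^{U(X_0)}=e^{\rho\|X_0\|^2}$.

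For the second estimate, with $U(x)=\rho_1\|(-A)^{1/2}x\|^2$, I would compute $DU(x)=2\rho_1(-A)x=-2\rho_1 Ax$ and $D^2U(x)=-2\rho_1 A$, so that $DU(x)\mu(x)=-2\rho_1\<Ax,Ax\>-2\rho_1\<Ax,x-x^3\>$. The leading term is $-2\rho_1\|Ax\|^2$, which will be cancelled by $\overline U(x)=2\rho_1\|Ax\|^2$; the cubic cross term $2\rho_1\<Ax,x^3\>=-2\rho_1\<\nabla x,\nabla(x^3)\>=-6\rho_1\int x^2|\nabla x|^2\le 0$ has a favorable sign after integration by parts, while the linear term $-2\rho_1\<Ax,x\>=2\rho_1\|\nabla x\|^2$ is exactly $2U(x)$. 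The trace term is $-\rho_1\,{\rm tr}[AQQ^*]=\rho_1\|(-A)^{1/2}Q\|_{\LL^0_2}^2<\infty$ by hypothesis, and the martingale term $\frac{\|Q^*DU(x)\|^2}{2e^{\alpha_1 t}}\le 2\rho_1^2\|(-A)^{1/2}Q\|^2\|(-A)^{1/2}x\|^2=C\rho_1^2\|\nabla x\|^2$ is again of order $\|\nabla x\|^2=U(x)/\rho_1$. Collecting terms, the left-hand side is bounded by $(2\rho_1+C\rho_1^2)\|\nabla x\|^2+\rho_1\|(-A)^{1/2}Q\|_{\LL^0_2}^2$, and choosing $\alpha_1=\alpha_1(\rho_1,Q)$ large enough gives the differential inequality; Lemma~\ref{lm-exp} then produces the second bound.

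The main obstacle I anticipate is twofold. First, all the computations above are formal: the process $X$ need not take values in a space where $\|Ax\|$ or $\|x\|_{L^4}^4$ is finite, and $U$ is not genuinely $\mathcal C^2$ on $\HH$. Rigorously one must run the argument on a Galerkin or spectral truncation $X^{(n)}$ (where the state space is finite-dimensional and $U$ is smooth), verify the inequality uniformly in $n$ with constants independent of $n$, apply Lemma~\ref{lm-exp} to each $X^{(n)}$, and then pass to the limit via Fatou's lemma after establishing convergence of $X^{(n)}$ to $X$ in the relevant norms — this is exactly the ``standard truncations and regularization procedures'' the authors say they omit, and the a priori estimates of Lemma~\ref{reg-tra} are what make the passage to the limit legitimate. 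Second, controlling the martingale term $\frac{\|\sigma^*(x)DU(x)\|^2}{2e^{\alpha t}}$ is what forces the dependence $\alpha=\alpha(\rho,Q)$ (resp.\ $\alpha_1=\alpha_1(\rho_1,Q)$): because this term scales like $\rho^2\|x\|^2$, the coefficient $\alpha\rho$ on the right must beat $2\rho+C\rho^2$, so $\alpha$ grows with $\rho$, which is precisely the functional form $\alpha=\lambda(\rho,Q)$ claimed in the statement. Everything else is cancellation engineered by the choice of $\overline U$ and sign-definiteness of the integration-by-parts terms coming from the cubic nonlinearity.
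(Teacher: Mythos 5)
Your proposal follows the paper's proof essentially verbatim: same Lyapunov functionals $U(x)=\rho\|x\|^2$ and $U_1(x)=\rho_1\|\nabla x\|^2$, the same application of Lemma~\ref{lm-exp}, the same integration-by-parts cancellations (including the favorable sign of $-6\rho_1\int x^2|\nabla x|^2$), and the same Galerkin/truncation caveat. The only loose point is your claim that the additive constant $\rho\|Q\|_{\LL^0_2}^2$ can be absorbed by enlarging $\alpha$ --- the inequality $\alpha\rho\|x\|^2\ge(2\rho+C\rho^2)\|x\|^2+\rho\|Q\|_{\LL^0_2}^2$ fails at $x=0$ for every $\alpha$; the paper instead subtracts this constant inside $\overline U$, which is the correct (and equally cheap) fix.
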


\begin{proof} 
Define $\mu(x)=Ax-x^3+x, \sigma(x)=Q$,
$U(x)=\rho \|x\|^2$ and $U_1(x)=\rho_1 \|\nabla x\|^2$. 
Then note that for $\rho>0$,
\begin{align*}
&\<DU(x),\mu(x)\>+\frac 12\text{tr}[D^2U(x)\sigma(x)\sigma^*(x)]+\frac 12 \|\sigma(x)^*DU(x)\|^2\\
&=2\rho \<x,Ax-x^3+x\>
+\rho \|Q\|_{L^0_2}^2 +2\rho^2\|Q^*x\|^2\\
&\le -2\rho \|\nabla x\|^2+2\rho \|x\|^2-2\rho\|x\|_{L^4}^4
+\rho \|Q\|_{L^0_2}^2+2\rho^2\|x\|^2 \|Q\|_{L^0_2}^2\\
&\le -2\rho \|\nabla x\|^2-2\rho\|x\|_{L^4}^4
+\rho \|Q\|_{L^0_2}^2+(2\rho +2\rho^2\|Q\|_{L^0_2}^2)\|x\|^2.
\end{align*}
Let $\alpha\ge 2\rho +2\rho^2\|Q\|_{L^0_2}^2$,  and define
\[
\overline{U}(x)=2\rho \|\nabla x\|^2+2\rho \|x\|_{L^4}^{4}-\rho \|Q\|_{\mathcal{L}_2^0}^2.
\]
Then  one may apply Lemma~\ref{lm-exp}, which yields
\begin{align*}
&\E \Big[\exp\Big(e^{-\alpha t}\rho\|X(t)\|^2+2\rho\int_{0}^{t}e^{-\alpha s} \|\nabla X(s)\|^2ds+2\rho \int_{0}^{t}
e^{-\alpha s} \|X(s)\|_{L^4}^4ds\Big)\Big]\\
&\le \E \Big[e^{\frac {\rho\|Q\|^2_{\LL^0_2}}{\lambda }}e^{\rho \|X_0\|^2}\Big]\le e^{\rho \|X_0\|^2}.
\end{align*}
 The second inequality is obtained with similar arguments and the fact that $\HH^1=H^1_0$:
\begin{align*}
&\<DU_1(x),\mu(x)\>+\frac 12\text{tr}[D^2U_1(x)\sigma(x)\sigma^*(x)]+\frac 12 \|\sigma(x)^*DU_1(x)\|^2\\
&\le-2\rho_1 \<Ax,Ax\>-6\rho_1\<\nabla x,\nabla x  x^2\>
+\rho_1 \| \nabla Q\|_{\LL^0_2}^2 +(2\rho_1+2\rho_1^2\| \nabla Q\|_{\LL^0_2}^2)\|\nabla x\|^2.
\end{align*}
It remains to apply Lemma~\ref{lm-exp}, to get 
for $\alpha_1\ge 2\rho_1+2\rho_1^2\|\nabla Q\|_{\LL^0_2}^2$, 
\begin{align*}
&\E \Big[\exp\Big(\Big(e^{-\alpha_1 t}\rho_1\|X(t)\|^2+2\rho_1\int_{0}^{t}e^{-\alpha_1 s} \|AX(s)\|^2ds\Big)\Big)\Big]\\
&\le \E \Big[e^{\frac {\rho_1\|\nabla Q\|^2_{\LL^0_2}}{\alpha_1 }}e^{\rho_1\|\nabla X_0\|^2}\Big]\le e^{\rho_1\|\nabla X_0\|^2}.
\end{align*}

 This concludes the proof of Proposition~\ref{exp-prop}.
\end{proof}

The use of Gagliardo--Nirenberg--Sobolev inequalities (see e.g.~\cite{Nir59}) then allows us to improve the result of Proposition~\ref{exp-prop} as follows: we control exponential moments of the type $\E\Big[ \exp\Big(\int_0^Tc\|X(s)\|_{\mathcal E}^2ds\Big)\Big]$ with arbitrarily  large parameter $c\in(0,\infty)$. This result is crucial in the approach used below to obtain higher rates of convergence for the splitting scheme.

%

\begin{prop}\label{exp-tra}
Let $d\le 3$, and assume that $\|(-A)^{\frac 12}Q\|_{\LL^0_2}<\infty$ and $X_0\in \HH^1$. 
Then the solution $X$ of~\eqref{ac} satisfies, for any $c>0$,
\begin{align*}
\E\Big[ \exp\Big(\int_0^Tc\|X(s)\|_{\mathcal E}^2ds\Big)\Big]\le  C(c,d,T,X_0,Q)<\infty.
\end{align*}
\end{prop}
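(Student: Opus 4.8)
The plan is to reduce the supremum norm $\|X(s)\|_{\mathcal E}$ to the $\HH^2$- and $L^2$-norms by a Gagliardo--Nirenberg--Sobolev interpolation, and then to feed the resulting bound into the exponential integrability estimates of Proposition~\ref{exp-prop}. Since $d\le 3$, one has the embedding $\HH^2\hookrightarrow\mathcal E$ together with the interpolation inequality $\|u\|_{\mathcal E}^2\le C\|Au\|^{\frac d2}\|u\|^{2-\frac d2}$. As the exponent $\frac d2\le\frac32$ is strictly less than $2$, Young's inequality furnishes, for every $\epsilon>0$, a constant $C_\epsilon<\infty$ such that $\|u\|_{\mathcal E}^2\le \epsilon\|Au\|^2+C_\epsilon\|u\|^2$. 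Applying this with $u=X(s)$ and integrating over $[0,T]$ gives the pathwise bound $c\int_0^T\|X(s)\|_{\mathcal E}^2\,ds\le c\epsilon\int_0^T\|AX(s)\|^2\,ds+cC_\epsilon\int_0^T\|X(s)\|^2\,ds$.

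I would then decouple the two contributions by the Cauchy--Schwarz inequality,
\[
\E\Big[\exp\Big(c\int_0^T\|X(s)\|_{\mathcal E}^2ds\Big)\Big]\le \E\Big[\exp\Big(2c\epsilon\int_0^T\|AX(s)\|^2ds\Big)\Big]^{\frac12}\E\Big[\exp\Big(2cC_\epsilon\int_0^T\|X(s)\|^2ds\Big)\Big]^{\frac12}.
\]
The first factor is handled by the second estimate of Proposition~\ref{exp-prop}, which controls the exponential moment of $\int_0^T e^{-\alpha_1 s}\|AX(s)\|^2ds$; bounding $e^{-\alpha_1 s}\ge e^{-\alpha_1 T}$ on $[0,T]$ transfers this into an exponential moment of the unweighted integral $\int_0^T\|AX(s)\|^2ds$. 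For the second factor I would use $\|u\|^2\le |\OO|^{\frac12}\|u\|_{L^4}^2\le C(1+\|u\|_{L^4}^4)$ on the bounded domain $\OO$, so that finiteness reduces to an exponential moment of $\int_0^T\|X(s)\|_{L^4}^4ds$, governed by the first estimate of Proposition~\ref{exp-prop}.

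The main obstacle is that the admissible exponential parameter for the $\HH^2$-term is \emph{limited}: the weight $e^{-\alpha_1 s}$ together with the fact that the admissible $\alpha_1$ grows with $\rho_1$ forces $\sup_{\rho_1}2\rho_1 e^{-\alpha_1 T}$ to be finite, so one cannot take the parameter $2c\epsilon$ on $\int_0^T\|AX\|^2ds$ as large as desired. The resolution is precisely the freedom in $\epsilon$: I would choose $\epsilon$ small enough (depending on $c$) that $2c\epsilon$ stays within this finite admissible range, keeping the first factor finite. The price is that $C_\epsilon$, hence the parameter $2cC_\epsilon$ on the lower-order term $\int_0^T\|X\|^2\,ds$, becomes large; but this is harmless, because that term enjoys exponential moments of \emph{every} order. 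Indeed, the computation in the proof of Proposition~\ref{exp-prop} with $U(x)=\rho\|x\|^2$ shows that the cubic nonlinearity produces a quartic dissipation $-2\rho\|x\|_{L^4}^4$ which, on the bounded domain $\OO$, dominates the quadratic cost $(2\rho+2\rho^2\|Q\|_{\LL^0_2}^2)\|x\|^2$ coming from the drift and from $\tfrac12\|\sigma^*DU\|^2$; hence for this functional the parameter $\alpha$ may be taken equal to $0$, and Lemma~\ref{lm-exp} yields $\E[\exp(\rho\int_0^T\|X(s)\|_{L^4}^4ds)]\le e^{\rho\|X_0\|^2+C(\rho,Q)T}$ for \emph{all} $\rho>0$, with no capping. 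Combining the two finite factors and fixing $\epsilon=\epsilon(c)$ concludes the proof, with $C(c,d,T,X_0,Q)$ absorbing $C_\epsilon$, $e^{-\alpha_1 T}$ and $C(\rho,Q)T$.
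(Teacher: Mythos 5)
Your proof is correct, and it follows the same overall strategy as the paper (Gagliardo--Nirenberg--Sobolev interpolation, Young's inequality with a small $\epsilon$ tuned to the capped admissible exponential parameter, Cauchy--Schwarz to decouple, then Proposition~\ref{exp-prop}), but with a genuinely different interpolation. The paper interpolates $\|X\|_{\mathcal E}$ between $\|\nabla X\|$ (for $d=1$) or $\|AX\|$ (for $d=2,3$) and $\|X\|_{L^4}$, so that Young's inequality leaves only a \emph{deterministic} remainder $C_d(\epsilon)$, and both random terms $\epsilon\|\cdot\|^2_{\HH^k}$ and $\epsilon\|\cdot\|_{L^4}^4$ carry the same small coefficient $\epsilon c$ that fits under the caps of Proposition~\ref{exp-prop}; no further input is needed. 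You instead interpolate between $\|AX\|$ and $\|X\|$, which pushes the large constant $C_\epsilon$ onto the \emph{random} quantity $\int_0^T\|X\|^2\,ds$, and you must therefore justify that this lower-order integral has exponential moments of every order. Your justification is sound: absorbing $(2\rho+2\rho^2\|Q\|_{\LL^0_2}^2)\|x\|^2$ into the quartic dissipation $-2\rho\|x\|_{L^4}^4$ via Young's inequality on the bounded domain lets you take $\alpha=0$ in Lemma~\ref{lm-exp} for $U(x)=\rho\|x\|^2$, yielding $\E[\exp(\rho\int_0^T\|X\|_{L^4}^4\,ds)]<\infty$ for all $\rho>0$ with no cap --- a small strengthening of Proposition~\ref{exp-prop} that the paper does not state but that its computation supports. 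What your route buys is a single uniform argument for all $d\le 3$ (and a reusable observation about unbounded exponential moments of the $L^4$ integral); what it costs is the extra lemma and the use of the $\HH^2$-level estimate even in $d=1$, where the paper gets by with the $\HH^1$-level one.
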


\begin{proof}
Assume first that $d=1$. Then we use the Gagliardo--Nirenberg--Sobolev inequality
$\|X\|_{\mathcal E} \le C_1\|\nabla X\|^{\frac 13}\|X\|^{\frac 23}_{L^4}$.

Thanks to Young's inequality, for all $\epsilon\in(0,1)$, there exists $C_1(\epsilon)\in(0,\infty)$ such that
\begin{align*}
\|X\|_{\mathcal E}^2
\le C_1\|\nabla X\|^{\frac 23}\|X\|^{\frac 43}_{L^4}
\le \Big(\epsilon \|\nabla X\|^{2}+ \epsilon \|X\|_{L^4}^{4}+C_1(\epsilon) \Big).
\end{align*}
Choose $\epsilon=\epsilon(c) \le \frac {\rho}{c e^{\alpha T}}\le 1$. Then, using Cauchy-Schwarz inequality, one gets
\begin{align*}
&\E\Big[ \exp\Big(\int_0^Tc\|X(s)\|_{\mathcal E}^2ds\Big)\Big]\\
&\le
\E\Big[ \exp\Big(\int_0^T  \epsilon c \|\nabla X\|^{2}+ \epsilon c \|X\|_{L^4}^{4}+C_1(\epsilon,c) \Big)ds\Big)\Big]\\
&\le e^{C_1(\epsilon,c)T}
\sqrt{\E\Big[ \exp\Big(\int_0^T  2\epsilon c \|\nabla X\|^{2}ds\Big)\Big]}
\sqrt{\E\Big[ \exp\Big(\int_0^T  2\epsilon c\|X\|_{L^4}^{4}ds\Big)\Big]}\\
&\le C(c,1,T,X_0,Q),
\end{align*}
 thanks to Proposition~\ref{exp-prop}, since $2\epsilon c\le \frac {\rho}{c e^{\alpha T}}$. This concludes the treament of the case $d=1$.

 When $d=2$, resp. $d=3$, we apply the Gagliardo-Nirenberg-Sobolev inequality, $\|X\|_{\mathcal E} \le C_2\|A X\|^{\frac 13}\|X\|^{\frac 23}_{L^4}$, resp. $\|X\|_{\mathcal E} \le C_3\|A X\|^{\frac 3 5}\|X\|^{\frac 25}_{L^4}$. In both cases, applying Young's inequality, for any $\epsilon\in(0,1)$, there exists $C_d(\epsilon)\in(0,\infty)$ such that
\begin{align*}
\|X\|_{\mathcal E}^2
\le \Big(\epsilon \|A X\|^{2}+ \epsilon \|X\|_{L^4}^{4}+C_d(\epsilon) \Big).
\end{align*}

%
Choose  $\epsilon=\epsilon(c) \le \min(\frac {\rho}{c e^{\alpha T}}, \frac {\rho_1}{ e^{\alpha_1 T}})\le 1$. Then
\begin{align*}
&\E\Big[ \exp\Big(\int_0^tc\|X(s)\|_{\mathcal E}^2ds\Big)\Big]\\
&\le
\E\Big[ \exp\Big(\int_0^T \epsilon c\|A X\|^{2}+ \epsilon c \|X\|_{L^4}^{4}+C_d(\epsilon,c) \Big)ds\Big)\Big]\\
&\le C(\epsilon,c,C_d,T)
\sqrt{\E\Big[ \exp\Big(\int_0^T 2\epsilon c \|AX\|^{2}ds\Big)\Big]}
\sqrt{\E\Big[ \exp\Big(\int_0^T 2\epsilon c\|X\|_{L^4}^{4}ds\Big)\Big]}\\
&\le C(c,C_d,T,X_0,Q),
\end{align*}
using Proposition~\ref{exp-prop}, and the condition on $\epsilon$.

This concludes the proof of Proposition~\ref{exp-tra}.
\end{proof}

 To conclude this section, we give an additional a priori estimate, with higher order spatial regularity for the solution $X$ of Eq.~\eqref{ac}.


\begin{prop}\label{hig-reg}
Let $d\le 3$, $\|(-A)^{\frac 12}Q\|_{\LL^0_2}<\infty$ and 
$X_0\in \HH^2$.
Then the solution $X\in \HH^2$, a.s.
Moreover for any $p\ge 2$,
\begin{align*}
\sup_{s\in[0,T]} \E\Big[\|X(s)\|_{\HH^2}^{p}\Big]\le C(T,Q,X_0,p).
\end{align*}
\end{prop}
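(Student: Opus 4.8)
The plan is to establish the $\HH^2$-a priori estimate for the exact solution $X$ by testing the equation for $\nabla X$ (equivalently, working in the $\HH^1$-norm of $\nabla X$, i.e. the $\HH^2$-norm of $X$) and controlling the nonlinear contribution $F(X)=X-X^3$ using the exponential integrability already established in Proposition~\ref{exp-tra}. Concretely, I would apply It\^o's formula to $\|AX(t)\|^2$ (recall $\HH^2=H^1_0\cap H^2$, so the $\HH^2$-norm is equivalent to $\|A\cdot\|+\|\cdot\|$). The first difficulty is that the drift involves $\Delta(X^3)$, which is not obviously integrable; however, since we only need $\sup_s\E[\|X(s)\|^p_{\HH^2}]$ rather than a bound inside an expectation over the whole trajectory, I can afford to absorb polynomial-growth terms using $\|X\|_{\mathcal E}$ and then use the arbitrary-constant exponential bound of Proposition~\ref{exp-tra} at the very end.

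\textbf{Main computation.} After the mollification/truncation step (which the statement says we may omit), I would write
\begin{align*}
\frac{d}{dt}\|AX(t)\|^2 &= 2\langle AX, A\dot X\rangle\,dt+(\text{noise})\\
&= -2\|A^{3/2}X\|^2\,dt+2\langle AX, A F(X)\rangle\,dt+(\text{It\^o correction})\,dt+(\text{martingale}),
\end{align*}
using $dX=(AX+F(X))\,dt+dW^Q$. The term $-2\|A^{3/2}X\|^2$ is dissipative and is discarded (or kept to absorb part of the nonlinearity). For the nonlinear term I would integrate by parts: $\langle AX, AF(X)\rangle=-\langle A^{3/2}X, A^{1/2}F(X)\rangle$, or more directly estimate $\langle AX, AX^3\rangle$ by expanding $A(X^3)$ pointwise (this produces terms like $X^2\,\Delta X$ and $X(\nabla X)^2$). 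The crucial bounds are $\langle AX, A(X^3)\rangle\ge c\|X\nabla X\|^2-C$-type one-sided inequalities, so that the genuinely bad cubic contributions have a favorable sign and only terms of the form $\|X\|_{\mathcal E}^2\|AX\|^2$ remain to be controlled. The It\^o correction $\|A\cdot\text{(noise)}\|$ reduces to $\|(-A)Q\|_{\LL^0_2}^2$-type quantities; here I would note that the hypothesis $\|(-A)^{1/2}Q\|_{\LL^0_2}<\infty$ suffices because the half-derivative can be shifted onto $S(t)$ via smoothing, or by working with $\|A^{1/2}(\nabla X)\|$ more carefully.

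\textbf{Gronwall with random coefficient.} The net differential inequality has the shape
\[
d\|AX(t)\|^2 \le \big(C+C\|X(t)\|_{\mathcal E}^2\,\|AX(t)\|^2\big)\,dt+d M(t),
\]
with $M$ a martingale. Applying a stochastic Gronwall / exponential-weight argument, one obtains $\|AX(t)\|^2\le e^{\,C\int_0^t\|X(s)\|_{\mathcal E}^2\,ds}\big(\|AX_0\|^2+Ct+\text{martingale increments}\big)$. Taking the $p$-th moment and Cauchy--Schwarz separates the exponential weight from the bracket; the weight $\E[\exp(C'\int_0^T\|X\|_{\mathcal E}^2)]$ is finite by Proposition~\ref{exp-tra} \emph{for every} constant, which is exactly why that proposition was proved with an arbitrary $c$. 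The remaining martingale and lower-order moments are handled by the Burkholder inequality~\eqref{Burk} together with the $\HH^1$-estimate of Lemma~\ref{reg-tra}.

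\textbf{The main obstacle} I expect is making the nonlinear term $\langle AX,A(X^3)\rangle$ yield a clean one-sided bound in which the dangerous highest-order pieces are dissipative, while the surviving term is precisely of the multiplicative form $\|X\|_{\mathcal E}^2\|AX\|^2$ that the exponential integrability can absorb; naive estimates would leave a term like $\|X\|_{\mathcal E}^2\|AX\|^2$ with the wrong integrability, or force a Gronwall constant inside the expectation that grows too fast. The resolution is to keep the full dissipation $-\|A^{3/2}X\|^2$ (and the sign-definite cubic part) to soak up derivatives, reducing everything to the borderline product term, and then to invoke Proposition~\ref{exp-tra} with a constant $c$ chosen large enough after the Cauchy--Schwarz split. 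Since all the needed exponential and $\HH^1$ ingredients are already in place, the argument is a (somewhat delicate) energy estimate followed by a stochastic Gronwall lemma, and I would present it in that order.
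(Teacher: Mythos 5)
Your overall strategy (a Gronwall estimate with random coefficient $e^{C\int_0^t\|X\|_{\mathcal E}^2ds}$, separated from the rest by Cauchy--Schwarz and absorbed by the arbitrary-constant exponential integrability of Proposition~\ref{exp-tra}) is exactly the mechanism the paper uses. However, there is a genuine gap in how you treat the noise at the $\HH^2$ level. If you apply It\^o's formula to $\|AX(t)\|^2$ for $dX=(AX+F(X))dt+dW^Q$, the It\^o correction is $\sum_k\|AQe_k\|^2=\|(-A)Q\|_{\LL^0_2}^2$, and the martingale term has quadratic variation involving $AQ$ as well. Under the stated hypothesis only $\|(-A)^{1/2}Q\|_{\LL^0_2}<\infty$ is assumed, and this does \emph{not} imply $\|(-A)Q\|_{\LL^0_2}<\infty$; both objects may simply be infinite. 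Your proposed fix --- ``shift the half-derivative onto $S(t)$ via smoothing'' --- is the right idea but is unavailable inside an It\^o energy identity, where no semigroup appears; it only works in a mild/convolution formulation. This is precisely why the paper does not run the energy estimate on $X$: it subtracts the stochastic convolution, works pathwise with the mild form of $Y=X-\omega$ (a random PDE with no It\^o correction), bounds $\|F(Y+\omega)\|_{\HH^2}$ by the Sobolev algebra inequality, applies a pathwise Gronwall producing the weight $\exp(C\int_0^T\|X\|_{\mathcal E}\|X\|_{\HH^2}ds)$, and controls $\E[\|A\omega(s)\|^p]$ separately using $\|(-A)^{1/2}S(t-s)(-A)^{1/2}Q\|_{\LL^0_2}\le C(t-s)^{-1/2}\|(-A)^{1/2}Q\|_{\LL^0_2}$, whose square is integrable in time. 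The decomposition $X=Y+\omega$ is therefore not cosmetic but essential to the hypothesis.

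A secondary point: even granting the noise issue, your claim that the one-sided bound on $\langle AX,A(X^3)\rangle$ leaves only a term of the form $\|X\|_{\mathcal E}^2\|AX\|^2$ needs more care. Expanding $\Delta(X^3)=3X^2\Delta X+6X|\nabla X|^2$ produces the cross term $\int X|\nabla X|^2\Delta X$, which after Gagliardo--Nirenberg in $d=3$ yields powers of $\|AX\|$ strictly larger than $2$ and must be absorbed by the sign-definite term $\int X^2(\Delta X)^2$ or by the $\|A^{3/2}X\|^2$ dissipation; this is delicate and not automatic. The paper's Gronwall weight is actually $\exp(C\int\|X\|_{\mathcal E}\|X\|_{\HH^2})$, and it is tamed by combining Gagliardo--Nirenberg with the \emph{second} exponential bound of Proposition~\ref{exp-prop} (exponential integrability of $\int_0^T\|AX\|^2ds$), not just with Proposition~\ref{exp-tra}; your argument would need the analogous ingredient if the surviving coefficient is not purely $\|X\|_{\mathcal E}^2$.
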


\begin{proof}

By the mild form of $Y$,
we get 
\begin{align*}
\|Y(t)\|_{\HH^2} 
\le 
\|S(t)X_0\|_{\HH^2}+\Big\|\int_0^tS(t-s)F(Y+\omega(s))ds\Big\|_{\HH^2}.
\end{align*}
 The boundedness of $S(\cdot)$ and  the calculus inequality in the Sobolev spaces (see e.g. \cite{MB02}) 
 leads that
\begin{align*}
&\| S(t-s)F(Y(s)+\omega(s)))\|_{\HH^2}\\
&\le C\Big(\|Y(s)+\omega(s)\|_{\HH^2}
+\|X(s)\|_{\HH^2}\|X(s)\|^2_{\mathcal E}+
\|X(s)^2\|_{\HH^2}\|X(s)\|_{\mathcal E}\Big)\\
&\le C\Big(\|Y(s)+\omega(s)\|_{\HH^2}+\|X(s)\|_{\HH^2}\|X(s)\|^2_{\mathcal E}+
\|X(s)\|_{\HH^2}^2\|X(s)\|_{\mathcal E}\Big).
\end{align*}
Gronwall inequality, together with Sobolev embedding theorem, implies that
\begin{align*}
\|Y\|_{\HH^2} 
&\le 
C\exp(C\int_0^T\|X(s)\|_{\mathcal E}\|X(s)\|_{\HH^2}ds)\\
&\quad\Big(\sup_{t\in [0,T]} \|S(t)X_0\|+\int_0^T
(1+\|X(s)\|_{\mathcal E}\|X(s)\|_{\HH^2})\|\omega(t)\|_{\HH^2}dt\Big).
\end{align*}
Taking expectation, the exponential integrability in Proposition \ref{exp-tra}, and the regularity of 
the stochastic convolution,
\begin{align*}
\sup_{t\in[0,T]}\E\Big[\|A\omega(s)\|^p\Big]
&\le \sup_{t\in[0,T]}\E\Big[\|A\int_0^tS(t-s)dW^Q(s)\|^p\Big]\\
&\le C\sup_{t\in[0,T]}\E\Big[\Big(\int_0^t\|(-A)^{\frac 12}S(t-s)(-A)^{\frac 12}Q\|_{\LL^0_2}^2ds\Big)^{\frac p2}\Big]
\le C(T,Q,p),
\end{align*}
yields that
\begin{align*}
\E[\|X(s)\|^p_{\HH^2} ]
&\le 
C\E[\|\omega(s)\|^p_{\HH^2}]+
C\Big(\E\Big[\exp(2pC\int_0^T\|X(s)\|_{\mathcal E}\|X(s)\|_{\HH^2}ds)\Big]\Big)^{\frac 12}\\
&\quad\times \Big(\sup_{t\in [0,T]}\|S(t)X_0\|_{\HH^2}^{2p}+\E\Big[\Big(\int_0^T\|X(s)\|_{\mathcal E}\|X(s)\|_{\HH^2}\|\omega(s)\|_{\HH^2}ds\Big)^{2p}\Big]\Big)^{\frac 12}.
\end{align*}
By Gagliardo--Nirenberg inequality in  $d=1,2,3,$  and Young inequality, we get 
\begin{align*}
\|X(s)\|_{\mathcal E}\|X(s)\|_{\HH^2}
&\le C\|X(s)\|_{\HH^2}\|\nabla X(s)\|^{\frac 12}\|X(s)\|_{L^4}^{\frac 12}\\
&\le \epsilon\|X(s)\|_{\HH^2}^2+\epsilon \|\nabla X(s)\|^2
+\epsilon\|X(s)\|_{L^4}^4+C(\epsilon),\quad d=1\\ 
\|X(s)\|_{\mathcal E}\|X(s)\|_{\HH^2}
&\le C\|X(s)\|_{\HH^2}^{\frac 43}\|X(s)\|_{L^4}^{\frac 23}\\
&\le \epsilon\|X(s)\|_{\HH^2}^2+\epsilon\|X(s)\|_{L^4}^4+C(\epsilon),\quad d=2\\
\|X(s)\|_{\mathcal E}\|X(s)\|_{\HH^2}
&\le C\|X(s)\|_{\HH^2}^{\frac 85}\|X(s)\|_{L^4}^{\frac 25}\\
&\le \epsilon\|X(s)\|_{\HH^2}^2+\epsilon\|X(s)\|_{L^4}^4+C(\epsilon),\quad d=3.
\end{align*}
Combining with Proposition \ref{exp-prop}, we get the boundedness
of this exponential moment $\exp\Big(C\int_0^T\|X(s)\|_{\mathcal E}\|X(s)\|_{\HH^2}ds\Big)$.
The estimation of  $\E\Big[\Big(\int_0^T\|X(s)\|_{\mathcal E}\|X(s)\|_{\HH^2}\|\omega(s)\|_{\HH^2}ds\Big)^{2p}\Big]$ is similar.
Gagliardo--Nirenberg--Sobolev inequality, together with Sobolev embedding $L^4 \hookrightarrow \HH^1$, yields that 
for $d=1$,
\begin{align*}
&\E\Big[\Big(\int_0^T\|X(s)\|_{\mathcal E}\|X(s)\|_{\HH^2}\|\omega(s)\|_{\HH^2}ds\Big)^{2p}\Big] 
\le C \E\Big[(\int_0^T\|X(s)\|_{\HH^2}^{2}ds)^{2p}\Big]
\\
&\qquad\qquad\qquad\qquad\qquad\qquad\qquad\qquad+C \E\Big[\int_0^T\|X(s)\|_{\HH^1}^{4p}\|\omega(s)\|_{\HH^2}^{4p}ds\Big],\\
\text{for}\quad d=2,\\
&\E\Big[\Big(\int_0^T\|X(s)\|_{\mathcal E}\|X(s)\|_{\HH^2}\|\omega(s)\|_{\HH^2}ds\Big)^{2p}\Big] 
\le C \E\Big[(\int_0^T\|X(s)\|_{\HH^2}^{2}ds)^{2p}\Big]\\
&\qquad\qquad\qquad\qquad\qquad\qquad\qquad\qquad
+C \E\Big[\int_0^T\|X(s)\|_{\HH^1}^{4p}\|\omega(s)\|_{\HH^2}^{6p}ds\Big],\\
\text{for}\quad d=3,\\
&\E\Big[\Big(\int_0^T\|X(s)\|_{\mathcal E}\|X(s)\|_{\HH^2}\|\omega(s)\|_{\HH^2}ds\Big)^{2p}\Big] 
\le C \E\Big[(\int_0^T\|X(s)\|_{\HH^2}^{2}ds)^{2p}\Big]\\
&\qquad\qquad\qquad\qquad\qquad\qquad\qquad\qquad
+C \E\Big[\int_0^T\|X(s)\|_{\HH^1}^{4p}\|\omega(s)\|_{\HH^2}^{10p}ds\Big].
\end{align*}
Combining  the a priori estimate in Lemma \ref{reg-tra}
and the above inequalities, we finish the proof.\
\end{proof}

\subsection{Strong convergence order 1 of the splitting scheme}
In this part, we focus on the sharp strong convergence rate of $X^N$ in $d=1$.
The main reason why we could not obtain higher strong convergence rate in $d=2,3$ is that this splitting up strategy will destroy the exponential integrability in $L^4([0,T];L^4)$ and $L^2([0,T]; \HH^2)$  of the original equation and that the a priori estimate of the auxiliary process $Z^N$ in $\HH^2$ can not be obtained, since the Sobolev embedding $\mathcal E \hookrightarrow \HH^1$ does not hold.
We also remark the a priori estimate in Lemma \ref{reg-spl} holds for $d=2, 3$. 
The study of higher strong convergence  order for  numerical schemes in 
high dimensional case will
be studied in future works.

We first state the main result of this section. 
\begin{tm}\label{main}
Assume that $d= 1$, $\|(-A)^{\frac 12}Q\|_{\LL^0_2}<\infty$ and 
$X_0\in \HH^2$.
The proposed method possesses strong convergence order 1, i.e, for any $p\ge 1$,
\begin{align*}
\sup_{n\le N}\E\Big[\Big\|X(t_n)-X^N(t_n)\Big\|^p\Big]
\le C\delta t^{p}.
\end{align*}
\end{tm}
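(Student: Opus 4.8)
The plan is to exploit the fact that, the noise being additive and the exponential Euler step leaving the stochastic convolution undiscretized, the mild formulations~\eqref{exa} and~\eqref{mil-spl} share exactly the same Gaussian process $\omega$; hence at the grid points
\begin{align*}
X(t_n)-X^N(t_n)=\int_0^{t_n}\bigl[S(t_n-s)F(X(s))-S(t_n-\lfloor s\rfloor_{\delta t})\Psi_{\delta t}(X^N(\lfloor s\rfloor_{\delta t}))\bigr]ds
\end{align*}
is driven solely by the drift. I would introduce the continuous interpolation $Z^N(t)=S(t)X_0+\int_0^tS(t-s)\Psi_{\delta t}(X^N(\lfloor s\rfloor_{\delta t}))ds+\omega(t)$, whose difference with $X^N$ at the grid points is the pure semigroup defect $\int_0^{t_n}\bigl[S(t_n-s)-S(t_n-\lfloor s\rfloor_{\delta t})\bigr]\Psi_{\delta t}(X^N(\lfloor s\rfloor_{\delta t}))ds$, and which, together with $\widehat Y=Z^N-\omega$, solves a random PDE with no It\^o term. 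The first task is to transfer the estimates of Section~\ref{sec-4} from $X$ to $Z^N$, uniformly in $\delta t$: the $\HH^2$ bound $\sup_n\E\|Z^N(t_n)\|_{\HH^2}^p<\infty$ (Lemma~\ref{reg-spl}) and the exponential integrability $\E[\exp(c\int_0^T\|Z^N(s)\|_{\mathcal E}^2ds)]<\infty$ for every $c>0$. I would obtain these by repeating the proofs of Propositions~\ref{exp-prop},~\ref{exp-tra} and~\ref{hig-reg} with $F$ replaced by $\Psi_{\delta t}$, checking via Lemma~\ref{pre} that the dissipativity required by Lemma~\ref{lm-exp} and the Gagliardo--Nirenberg--Sobolev chain $\|\cdot\|_{\mathcal E}\le C\|\nabla\cdot\|^{1/3}\|\cdot\|_{L^4}^{2/3}$ survive the modification uniformly in $\delta t$. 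It is here that the restriction $d=1$ enters.

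Next I would test the difference $\rho=X-Z^N=Y-\widehat Y$, which satisfies $\rho'(t)=A\rho(t)+F(X(t))-\Psi_{\delta t}(X^N(\lfloor t\rfloor_{\delta t}))$, against $\rho(t)$ in $\HH$. Since $\<A\rho,\rho\>=-\|\nabla\rho\|^2\le0$, the estimate reduces to the drift difference, which I would split as
\begin{align*}
F(X(t))-\Psi_{\delta t}(X^N(\lfloor t\rfloor_{\delta t}))&=\bigl[F(X(t))-F(Z^N(t))\bigr]+\bigl[F(Z^N(t))-F(Z^N(\lfloor t\rfloor_{\delta t}))\bigr]\\
&\quad+\bigl[F(Z^N(\lfloor t\rfloor_{\delta t}))-F(X^N(\lfloor t\rfloor_{\delta t}))\bigr]+\bigl[F(X^N(\lfloor t\rfloor_{\delta t}))-\Psi_{\delta t}(X^N(\lfloor t\rfloor_{\delta t}))\bigr].
\end{align*}
The first bracket paired with $\rho$ is bounded by $\|\rho\|^2$ (one-sided Lipschitz of $F$) and feeds Gronwall. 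The last bracket is $\OOO(\delta t)$ pathwise by the last estimate of Lemma~\ref{pre}. The third bracket is controlled by $\|Z^N-X^N\|$, which is $\OOO(\delta t)$ because $\|(S(\delta t)-I)v\|\le C\delta t\|v\|_{\HH^2}$ and $\|\Psi_{\delta t}(X^N(\lfloor s\rfloor_{\delta t}))\|_{\HH^2}$ is bounded by Lemma~\ref{reg-spl}. After Young's inequality these produce $\OOO(\delta t^2)$ forcing terms weighted by polynomial $\mathcal E$- and $\HH^2$-norms.

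The genuine difficulty is the second bracket $F(Z^N(t))-F(Z^N(\lfloor t\rfloor_{\delta t}))$: the temporal increment $Z^N(t)-Z^N(\lfloor t\rfloor_{\delta t})$ contains the stochastic convolution increment $\omega(t)-\omega(\lfloor t\rfloor_{\delta t})$, which is only of pathwise size $\OOO(\delta t^{1/2})$, so a crude pathwise Gronwall would stall at order $1/2$. Order $1$ is recovered precisely because the convolution is exact: I would Taylor-expand $F$ about $Z^N(\lfloor t\rfloor_{\delta t})$ and, for the leading term $F'(Z^N(\lfloor t\rfloor_{\delta t}))(\omega(t)-\omega(\lfloor t\rfloor_{\delta t}))$, split the increment into its $\FFF(\lfloor t\rfloor_{\delta t})$-measurable part $\int_0^{\lfloor t\rfloor_{\delta t}}\bigl(S(t-s)-S(\lfloor t\rfloor_{\delta t}-s)\bigr)dW^Q(s)$, which is $\OOO(\delta t)$ in mean square by the smoothing of $S$ and the assumption $\|(-A)^{1/2}Q\|_{\LL^0_2}<\infty$, and its fresh, conditionally centered part $\int_{\lfloor t\rfloor_{\delta t}}^tS(t-s)dW^Q(s)$, whose mean-square contribution is $\OOO(\delta t^2)$ by the It\^o isometry and the Burkholder inequality~\eqref{Burk}. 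The quadratic Taylor remainder is $\OOO(\delta t)$ as a field, hence $\OOO(\delta t^2)$ once paired with $\rho$ and bounded by Young's inequality. I expect this mean-square, as opposed to pathwise, treatment of the noise increment, reconciled with the exponential Gronwall factor, to be the main obstacle; the cubic growth of $F$ is absorbed throughout by the exponential integrability of $\|X\|_{\mathcal E}$ and $\|Z^N\|_{\mathcal E}$.

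Collecting everything yields a Gronwall inequality of the form $\frac{d}{dt}\|\rho(t)\|^2\le g(t)\|\rho(t)\|^2+R(t)$ with $g(t)=C(1+\|X(t)\|_{\mathcal E}^2+\|Z^N(t)\|_{\mathcal E}^2)$ and a remainder whose time integral is $\OOO(\delta t^2)$ in every $L^{p}(\Omega)$. Applying Gronwall's lemma and then H\"older's inequality decouples the factor $\exp(\int_0^Tg(s)ds)$, bounded in every $L^p(\Omega)$ by Proposition~\ref{exp-tra} and its analogue for $Z^N$, from the $\OOO(\delta t^2)$ remainder; adding the $\OOO(\delta t)$ semigroup defect $\|Z^N(t_n)-X^N(t_n)\|$ gives $\sup_{n\le N}\E\|X(t_n)-X^N(t_n)\|^p\le C\delta t^p$, establishing Theorem~\ref{main}.
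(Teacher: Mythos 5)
Your architecture correctly identifies every key ingredient (an auxiliary process agreeing with $X^N$ at grid points, isolation of the noise-increment term, a conditioning argument to upgrade the pathwise $\delta t^{1/2}$ to $\delta t$, exponential integrability to absorb the cubic growth), and your energy pairing $\<F(X)-F(Z^N),\rho\>\le\|\rho\|^2$ is in fact cleaner than the paper's Gronwall step. But the central step of the proof --- the one you yourself flag as ``the main obstacle'' --- is not closed, and it is precisely where the order $1$ is won or lost. In your Gronwall inequality the fresh-noise part $\int_{\lfloor t\rfloor_{\delta t}}^tS(t-r)dW^Q(r)$ of the increment $Z^N(t)-Z^N(\lfloor t\rfloor_{\delta t})$ is paired with $\rho(t)$, which is \emph{not} $\FFF(\lfloor t\rfloor_{\delta t})$-measurable; the conditional centering you invoke therefore does not make the cross term vanish, and if you instead take $\|\cdot\|^2$ via Young's inequality you only get a forcing term of size $\delta t$ (not $\delta t^2$), i.e.\ you stall at order $1/2$. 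Writing $\rho(t)=\rho(\lfloor t\rfloor_{\delta t})+(\rho(t)-\rho(\lfloor t\rfloor_{\delta t}))$ does not obviously rescue this, since the correction is only known a priori to be of order $\delta t^{1/2}$. The paper avoids the issue by \emph{not} using the energy formulation for this term: it stays in the mild form, freezes the coefficient $G(X(t_j),Z^N(t_j))$ at $t_j$, applies the stochastic Fubini theorem to convert the double integral $\sum_j\int_{t_j}^{t_{j+1}}S(t_n-s)G(\cdot,\cdot)\int_{t_j}^sS(s-r)dW^Q(r)\,ds$ into a single stochastic integral with an adapted integrand of size $\OO(\delta t)$, and then applies the Burkholder inequality. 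Some version of this Fubini/BDG manipulation is indispensable and is absent from your proposal.

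Two further points. First, you cite an $\HH^2$ a priori bound $\sup_n\E\|Z^N(t_n)\|_{\HH^2}^p<\infty$ ``by Lemma~\ref{reg-spl}'', but that lemma only gives $\HH^1$ bounds, and no $\HH^2$ estimate for the numerical solution is proved anywhere (Proposition~\ref{hig-reg} concerns only the exact solution $X$). Your $\OO(\delta t)$ bound on $\|Z^N(t_n)-X^N(t_n)\|$ via $\|(S(\delta t)-I)v\|\le C\delta t\|v\|_{\HH^2}$ rests on this; it is reparable by the $(-A)^{-\frac12-\eta}(S(u)-I)$ smoothing trick with only $\HH^1$ norms (as in the paper's treatment of $II_2$), but as written it is a gap. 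Second, your plan to obtain exponential integrability of $\|Z^N\|_{\mathcal E}$ by ``repeating Propositions~\ref{exp-prop} and~\ref{exp-tra} with $F$ replaced by $\Psi_{\delta t}$'' does not match your own auxiliary process: its drift is the \emph{frozen} coefficient $\Psi_{\delta t}(X^N(\lfloor t\rfloor_{\delta t}))$, not a function of the current state, so the dissipativity hypothesis of Lemma~\ref{lm-exp} is unavailable. This is exactly why the paper defines its $Z^N$ as a genuine piecewise flow, alternating the nonlinear ODE $dZ=F(Z)dt$ with the linear SPDE on each subinterval, so that a Lyapunov estimate can be propagated interval by interval (Proposition~\ref{reg-spl2}). (It is worth noting that with your one-sided Lipschitz pairing the Gronwall rate $g(t)$ is actually a constant, so the exponential integrability might be dispensable in your scheme --- but the remaining difficulties above stand regardless.)
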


To obtain the higher strong order of the splitting scheme, we consider the following auxiliary predictable right continuous   process $Z^N$ such that $Z^N(t_n)=X^N(t_n)$, $n\le N$.
The process $Z^N$ is defined by recursion. Let $Z^N(0):=X_0$ and on each subinterval $[t_{n-1},t_{n}]$, $1\le n\le N$,
\begin{align*}
Z^N(t)&=\Phi_{t-t_{n-1}}(Z^N(t_{n-1})),\quad t\in [t_{n-1},t_n),\\
Z^N(t_n)&=S(\delta t)\Phi_{\delta t}(Z^N(t_{n-1}))+\int_{t_{n-1}}^{t_{n}}S(t_n-s)dW^Q(s).
\end{align*}
Since when $t\in [t_{n-1},t_n)$, 
\begin{align*}
dZ^N=F(Z^N(t))dt,
\end{align*}
we rewrite the definition of $Z^N$ into 
an integration form,
\begin{align}\label{def-spld}
Z^N(t)&=Z^N(t_{n-1})+\int_{t_{n-1}}^t F(Z^N(s))ds, \quad t\in [t_{n-1},t_n),\\\label{def-spls}
Z^N(t_n)&=
S(\delta t)Z^N(t_{n-1})+\int_{t_{n-1}}^{t_n} S(\delta t )F(Z^N(s))ds+\int_{t_{n-1}}^{t_{n}}S(t_n-s)dW^Q(s).
\end{align}
Letting $n$ be $n-1$ in the above equation and then plugging it
into Eq. \eqref{def-spld} yields that 
\begin{align*}
Z^N(t)&
= S(\delta t)Z^N(t_{n-2})+
\int_{t_{n-2}}^{t_{n-1}}S(\delta t)F(Z^N(s))ds+\int_{t_{n-1}}^t F(Z^N(s))ds\\
&\quad+\int_{t_{n-2}}^{t_{n-1}}S(t_{n-1}-s)dW^Q(s), \quad t\in [t_{n-1},t_n).
\end{align*}
Repeating this process, we get, for $t\in [t_{n-1},t_{n})$,
\begin{align*}
Z^N(t)&=S(t_{n-1})X_0+
\int_{0}^{t_{n-1}} S(t_{n-1}-\lfloor s \rfloor_{\delta t} )F(Z^N(s))ds\\
&\quad+\int_{t_{n-1}}^{t} F(Z^N(s))ds
+\int_{0}^{t_{n-1}} S(t_{n-1}-s)dW^Q(s),\\
\text{and}\\
Z^N(t_n)&=
S(t_{n})X(0)+
\int_{0}^{t_{n}} S (t_{n}-\lfloor s \rfloor_{\delta t} )F(Z^N(s))ds+\int_{0}^{t_{n}} S (t_{n}-s)dW^Q(s).
\end{align*}

\subsubsection{A priori estimate for the auxiliary process }
In order to get the strong convergence order, we also need the following a priori estimations  of $Z^N$.

\begin{lm}\label{reg-spl}
Assume that $d=1$, $\|(-A)^{\frac 12}Q\|_{\LL^0_2}<\infty $, ${\|X_0\|_{\HH^1}}<\infty$. 
Then  for $p\ge 2$,  the auxiliary process $Z^N$ satisfies
\begin{align*}
 \E\Big[\sup_{s\in [0,T]}\|Z^N(s)\|_{\HH^1}^{p}\Big]\le C(X_0,p,T,Q).
\end{align*}
\end{lm}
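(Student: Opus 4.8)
The plan is to read off the $\HH^1$ bound directly from the integral representation of $Z^N$ at the grid points derived just above the statement, rather than iterating the one-step recursion. Recall that for $t_n=n\delta t$,
$$Z^N(t_n)=S(t_n)X_0+\int_0^{t_n}S(t_n-\lfloor s\rfloor_{\delta t})F(Z^N(s))\,ds+\omega(t_n),$$
where $\omega(t)=\int_0^tS(t-s)dW^Q(s)$ is the stochastic convolution. The decisive structural point is that the noise enters \emph{only} through the single term $\omega(t_n)$; if one instead summed the stochastic increments $\int_{t_{n-1}}^{t_n}S(t_n-s)dW^Q(s)$ one would accumulate $N$ terms of size $\delta t^{1/2}$ and obtain a spurious $\delta t^{-1/2}$ blow-up. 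Working with the full convolution avoids this entirely.

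First I would collect three ingredients, each standard under the present hypotheses. (i) Since $\|(-A)^{\frac12}Q\|_{\LL_2^0}<\infty$ and $\|(-A)^{\frac12}S(t-s)Q\|_{\LL_2^0}\le\|(-A)^{\frac12}Q\|_{\LL_2^0}$, the Burkholder inequality \eqref{Burk} and the factorization method of Lemma~\ref{frac} (with $E_1=E_2=\HH^1$) yield $\E[\sup_{t\le T}\|\omega(t)\|_{\HH^1}^p]\le C(T,Q,p)$ for every $p$. (ii) The scalar phase flow $\phi_\tau$ of $\dot y=y-y^3$ satisfies $|\phi_\tau'(z)|=\exp(\int_0^\tau(1-3\phi_s^2)ds)\le e^\tau$ (as $F'\le1$) and $|\phi_\tau(z)|\le\max(1,|z|)$; since $\Phi_{\delta t}$ is the associated Nemytskii map and $\|z\|_{\HH^1}=\|(-A)^{\frac12}z\|=\|\nabla z\|$ on $\HH^1=H^1_0$, this gives $\|\Phi_{\delta t}(z)\|_{\HH^1}\le e^{\delta t}\|z\|_{\HH^1}$ and $\|\Phi_{\delta t}(z)\|_{L^q}\le|\OO|^{1/q}+\|z\|_{L^q}$. (iii) Because $Z^N(t_n)=X^N(t_n)$ and the phase flow is $L^q$-stable by the previous bound, the $L^q$ a priori estimates of Lemma~\ref{pri} (extended to $d\le3$ as in Corollary~\ref{com}) transfer to $Z^N$, so $\E[\sup_{t\le T}\|Z^N(t)\|_{L^q}^p]\le C$ for all finite $p,q$; in particular $\E[\sup_{t\le T}\|F(Z^N(t))\|^p]\le C\,\E[\sup_t(1+\|Z^N(t)\|_{L^6}^{3})^p]\le C$.

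Next I would estimate the deterministic convolution in $\HH^1$ by moving $(-A)^{\frac12}$ inside and using the smoothing bound $\|(-A)^{\frac12}S(\tau)\|_{\LL(\HH,\HH)}\le C\tau^{-1/2}$:
$$\Big\|\int_0^{t_n}S(t_n-\lfloor s\rfloor_{\delta t})F(Z^N(s))ds\Big\|_{\HH^1}\le C\sup_{s\le T}\|F(Z^N(s))\|\int_0^{t_n}(t_n-\lfloor s\rfloor_{\delta t})^{-1/2}ds.$$
The crucial uniform estimate is $\int_0^{t_n}(t_n-\lfloor s\rfloor_{\delta t})^{-1/2}ds=\delta t^{1/2}\sum_{j=1}^n j^{-1/2}\le 2\sqrt{t_n}\le 2\sqrt T$, independent of $\delta t$. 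Combined with $\|S(t_n)X_0\|_{\HH^1}\le\|X_0\|_{\HH^1}$, this controls $\theta_n:=Z^N(t_n)-\omega(t_n)$ by $\sup_n\|\theta_n\|_{\HH^1}\le\|X_0\|_{\HH^1}+2C\sqrt T\sup_{s\le T}\|F(Z^N(s))\|$, whence $\E[\sup_n\|\theta_n\|_{\HH^1}^p]\le C(X_0,T,Q,p)$ by (iii). Adding $\omega$ via (i) gives $\E[\sup_n\|Z^N(t_n)\|_{\HH^1}^p]\le C$, and for $t\in[t_{n-1},t_n)$ one has $Z^N(t)=\Phi_{t-t_{n-1}}(Z^N(t_{n-1}))$, so (ii) yields $\sup_{t\le T}\|Z^N(t)\|_{\HH^1}\le e^{\delta t}\sup_n\|Z^N(t_n)\|_{\HH^1}$, closing the argument.

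The point to watch is uniformity in $\delta t$: the deterministic convolution carries the genuinely singular kernel $(t_n-\lfloor s\rfloor_{\delta t})^{-1/2}$, whose smallest argument is $\delta t$, and it is only the integrability of this singularity — the Riemann-sum bound $\delta t^{1/2}\sum_j j^{-1/2}\le 2\sqrt T$ — that keeps the constant bounded as $\delta t\to0$. I would also emphasize that the whole argument uses no more than $L^6$ control of $Z^N$: there is neither a cubic Gronwall loop nor any exponential integrability, which is exactly why the estimate persists in $d=2,3$ (as the authors remark), in contrast with the $\HH^2$ bound and the order-one result, where the loss of the embedding $\mathcal E\hookrightarrow\HH^1$ and of exponential integrability for $Z^N$ confines the analysis to $d=1$.
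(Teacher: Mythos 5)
Your proof is correct, but it follows a genuinely different route from the paper's. The paper works at the level of energy identities: on each subinterval it applies It\^o's formula to $\|\nabla \widehat Z\|^2$ for the linear SPDE $d\widehat Z=A\widehat Z\,dt+dW^Q$ and the flow estimate $\|\nabla\Phi_{\tau}(z)\|^2\le e^{C\tau}\|\nabla z\|^2$, obtains $\sup_s\E[\|Z^N(s)\|_{\HH^1}^p]$ first, and then upgrades to $\E[\sup_n\|X^N(t_n)\|_{\HH^1}^p]$ through a discrete Gronwall argument on $\|\nabla(X^N(t_n)-\omega(t_n))\|^2$ that needs Gagliardo--Nirenberg and produces sixth powers of $\HH^1$ norms on the right-hand side, closed by the first stage. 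You instead read the bound off the mild representation at the grid points, using the parabolic smoothing $\|(-A)^{1/2}S(\tau)\|_{\LL(\HH,\HH)}\le C\tau^{-1/2}$ together with the uniform-in-$\delta t$ integrability of the discretized kernel, $\int_0^{t_n}(t_n-\lfloor s\rfloor_{\delta t})^{-1/2}ds=\delta t^{1/2}\sum_{k=1}^n k^{-1/2}\le 2\sqrt{T}$, and the pointwise flow bounds $|\Phi_\tau'|\le e^\tau$, $|\Phi_\tau(z)|\le\max(1,|z|)$ to interpolate off the grid. All the ingredients you invoke are available in the paper (the $\HH^1$ bound on $\omega$ via factorization, the transfer of the $L^q$ estimates of Lemma~\ref{pri} to $Z^N$), and the key singular-kernel observation is sound, so the argument closes; it gives $\E[\sup]$ in one pass, needs only $L^6$ control of $Z^N$, and is noticeably shorter. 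What the paper's energy route buys in exchange is that the same It\^o-formula machinery on the subintervals is reused immediately afterwards to prove the exponential integrability of $Z^N$ (Proposition~\ref{reg-spl2}), whereas your semigroup argument would not produce the dissipation terms needed there.
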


\begin{proof}
We first show the estimation of $\sup\limits_{s\in[0,T]}\E[\|Z^N(s)\|_{\HH^1}^p]\le C(T,p,Q,X_0)$.
Since similar arguments in Lemma 3.1 implies that 
$\sup\limits_{s\in[0,T]}\E[\|Z^N(s)\|^p]\le C(T,p,Q,X_0)$,
it sufficient to show $\sup\limits_{s\in[0,T]}\E[\|\nabla Z^N(s)\|^p]\le C(T,p,Q,X_0)$. For simplify the presentation, we only present the case $p=2$.
Consider the linear SPDE $d\widehat Z=A\widehat  Zdt+dW^Q(t)$ in local interval $[t_{n-1},t_n]$ with 
$\widehat Z(t_{n-1})=
\Phi_{\delta t}(Z^N(t_{n-1}))$, we have $\widehat Z(t_n)=Z^N(t_n)$.
By It\^o formula, we have 
\begin{align*}
\|\nabla Z^N(t_n)\|^2&=\|\nabla \Phi_{\delta t}(Z^N(t_{n-1}))\|^2
-2\int_{t_{n-1}}^{t_n}\<A\widehat Z,A\widehat Z\>ds\\
&\quad+2\int_{t_{n-1}}^{t_n}\<\nabla \widehat Z,\nabla dW(s)\>
+\int_{t_{n-1}}^{t_n}\|\nabla Q\|_{\LL^0_2}^2ds.
\end{align*}
Then taking expectation yields that
\begin{align*}
\E[\|\nabla Z^N(t_n)\|^2]
\le \E[\|\nabla \Phi_{\delta t}(Z^N(t_{n-1}))\|^2]
+\int_{t_{n-1}}^{t_n}\|\nabla Q\|_{\LL^0_2}^2ds.
\end{align*}
Since $\Phi_{t-t_{n-1}}Z^N(t_{n-1})$ is the solution of
$d\widetilde Z=F(\widetilde Z)dt$ with $\widetilde Z(t_{n-1})=Z^N(t_{n-1})$, the similar arguments 
yields that
\begin{align*}
\|\nabla \Phi_{t-t_{n-1}}(Z^N(t_{n-1}))\|^2 \le e^{C\delta t}\|\nabla Z^N(t_{n-1})\|^2.
\end{align*}
Combing the above estimations, 
we have for $t\in [t_{n-1},t_n)$,
\begin{align*}
\E\Big[\|\nabla Z^N(t)\|^2\Big]
&\le e^{C\delta t}\E\Big[\|\nabla Z^N(t_{n-1})\|^2\Big]\\
&\le e^{C\delta t}\Big(e^{C\delta t}\E\Big[\|\nabla Z^N(t_{n-2})\|^2\Big]
+C\delta t \Big)\\
&\le e^{CT}\|X_0\|^2+C(Q,T),
\end{align*}
which implies that $\sup\limits_{s\in[0,T]}\E[\|\nabla Z^N(s)\|^2]\le C(T,2,Q,X_0)$.
Similarly, we obtain the 
uniformly boundedness of $\sup\limits_{s\in [0,T]}\E\Big[\|Z^N(s)\|_{\HH^1}^{p}\Big]$, $p\ge 2$.

Now we are in position to show the desired result.
By the argument in Lemma 3.1, we 
have $\E[\sup\limits_{n\in N}\|X(t_n)\|_{L^q}^{p}]\le C$, $q=2m$. 
Then we aim to prove that $\E[\sup\limits_{n\in N}\|\nabla X(t_n)\|^{p}]\le C$. By the similar procedure of the previous proof of Lemma \ref{pri}, we get
\begin{align*}
\|\nabla \Big(X^N(t_{n})-\omega(t_{n})\Big)\|^2&\le (1+\delta t)\Big\|\nabla \Big(\Phi_{\delta t}(X^N(t_{n-1}))
-\Phi_{\delta t} (\omega(t_{n-1}))\Big)\Big\|^2\\
&\quad+C\delta t(1+\|\omega(t_{n-1})\|_{\HH^1}^6).
\end{align*}

Now, consider the SDEs $d\widetilde Z_{i}=F(\widetilde Z_{i})dt$ with different inputs $\widetilde Z_{1}(t_{n-1})=X^N(t_{n-1}))$ and $\widetilde Z_{2}(t_{n-1})=\omega(t_{n-1})$, we get $d(\widetilde Z_{1}-\widetilde Z_{2})=(F(\widetilde Z_{1})-F(\widetilde Z_{2}))dt$ for $t\in [t_{n-1},t_n]$.
Further calculations, together with Gagliardo--Nirenberg,
Holder and Young inequalities, yield that 
\begin{align*}
&\|\nabla\Big(\Phi_{t-t_{n-1}}(X^N(t_{n-1}))-\Phi_{t-t_{n-1} }(\omega(t_{n-1})) \Big)\|^2\\
&\le \|\nabla (X^N(t_{n-1})-\omega(t_{n-1}))\|^2
-\int_{t_{n-1}}^t\<(\widetilde Z_1-\widetilde Z_2)\nabla (\widetilde Z_1^2+\widetilde Z_1\widetilde Z_2+\widetilde Z_2^2),  \nabla \widetilde Z_1-\nabla \widetilde Z_2\>ds\\
&\le \|\nabla (X^N(t_{n-1})-\omega(t_{n-1}))\|^2
+C\int_{t_{n-1}}^t\|\nabla \widetilde Z_1-\nabla \widetilde Z_2\|^2 ds\\
&\quad+C\int_{t_{n-1}}^t\|\nabla (\widetilde Z_1^2+\widetilde Z_1\widetilde Z_2+\widetilde Z_2^2)\|^2\|(\widetilde Z_1-\widetilde Z_2)\|_{\mathcal E}^2ds\\
&\le \|\nabla (X^N(t_{n-1})-\omega(t_{n-1}))\|^2
+C\int_{t_{n-1}}^t\|\nabla \widetilde Z_1-\nabla \widetilde Z_2\|^2 ds\\
&\quad+C\int_{t_{n-1}}^t(\|\widetilde Z_1\|_{\HH^1}^4+\|\widetilde Z_2\|_{\HH^1}^4)\|\widetilde Z_1-\widetilde Z_2\|^2ds.
\end{align*}
On the other hand, the monotonicity of $F$ yields that 
the solution of 
$d\widetilde Z=F(\widetilde Z)dt$ satisfies for $t\in [t_{n-1},t_n]$,
\begin{align*}
\sup_{t\in [t_{n-1},t_n]}\|\widetilde Z(t)\|_{\HH^1}^2\le e^{C\delta t}(1+\|\widetilde Z(t_{n-1})\|_{\HH^1}^2).
\end{align*}
The above inequality yields that
\begin{align*}
&\|\nabla\Big(\Phi_{t-t_{n-1}}(X^N(t_{n-1}))-\Phi_{t-t_{n-1} }(\omega(t_{n-1})) \Big)\|^2\\
&\le e^{C\delta t}(\|\nabla (X^N(t_{n-1})-\omega(t_{n-1}))\|^2+e^{C\delta t}\delta t(1+\|X^N(t_{n-1})\|^6_{\HH^1}+\|\omega(t_{n-1})\|^6_{\HH^1}).
\end{align*}
Then discrete Gronwall's inequality leads that  
\begin{align*}
\|X^N(t_n)\|_{\HH^1}^2
\le C\|X_0\|_{\HH^1}^2+C\|\omega(t_n)\|^2_{\HH^1}+C\sum_{j=0}^{n-1}\delta t(1+\|X^N(t_j)\|_{\HH^1}^6+\|\omega(t_j)\|_{\HH^1}^6).
\end{align*}
Taking expectation, we obtain for any $p\ge 2$,
\begin{align*}
\E\Big[\sup_{n\le N}\|X^N(t_n)\|^p_{\HH^1}\Big]
\le C\Big(1+\|X_0\|_{\HH^1}^p+\sup_{n\le N}\E\Big[\|X^N(t_n)\|^{3p}_{\HH^1}\Big]
+\E\Big[\sup_{n\le N}\|\omega(t_n)\|_{\HH^1}^{3p}\Big]\Big).
\end{align*}
Denote $W_{\gamma}=\int_{0}^t(t-s)^{-\gamma}S(t-s)(-A)^{\frac 12}dW^Q(s)$.
By the fractional method and Lemma \ref{frac},
we have for $\beta<\frac 12$, $\frac 12>\gamma>
\beta+\frac 1{3p}$,
\begin{align*}
\E\Big[\sup_{s\in[0,T]}\|\omega(s)\|_{\HH^1}^{3p}\Big]
&\le \E\Big[\sup_{s\in [0,T]}\|\omega(s)\|_{\HH^{1+2\beta}}^{3p}\Big]\le C \E\Big[\sup_{s\in [0,T]}\|G_{\gamma}W_{\gamma}(s)\|_{\HH^{2\beta}}^{3p}\Big]\\
&\le C \int_0^T \E\Big[\|W_{\gamma}(s)\|^{3p}_{\HH^{2\beta}}\Big]ds\\
&\le C\Big(\int_0^Ts^{-2\gamma}\|S(s)(-A)^{\frac 12}Q\|_{\LL_2^0}^2ds\Big)^{\frac {3p}2}\le C(T,Q,p),
\end{align*}
which implies that $\E\Big[\sup\limits_{n\le N}\|X^N(t_n)\|^p_{\HH^1}\Big]\le C(T,X_0,p,Q).$
Then the definition of $Z^N$ yields that 
\begin{align*}
\E\Big[\sup_{s\in[0,T]}\|Z^N(s)\|^p_{\HH^1}\Big]
\le C\Big(1+\E\Big[\sup_{n\le N}\|X^N(t_n)\|_{\HH^1}^{3p}\Big]\Big)\le C(T,X_0,Q,p),
\end{align*} 
which completes the proof.
\end{proof}

Similar to the procedures in the proof of Proposition \ref{exp-tra}, we show  the following exponential integrability
of $Z^N$ which is the key to get the higher strong convergence rate.
The rigorous proof is that 
in each local interval, we first apply the truncated argument and the spectral Galerkin 
method, then use the It\^o formula and Fatou lemma
to get the evolution of Lyapunouv functions. 
For convenience, we omit these procedures.

\begin{prop}\label{reg-spl2}
Assume that $d=1$, $\|(-A)^{\frac 12}Q\|_{\LL^0_2}<\infty $, $\|X_0\|_{\HH^1}<\infty$. Then we have for any $c>0$,
\begin{align}\label{exp-aux}
\E\Big[\exp\Big(\int_0^Tc\|Z^N(s)\|_{\mathcal E}^2ds\Big)\Big]\le C(X_0,T,Q,c).
\end{align}
\end{prop}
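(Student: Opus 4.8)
The plan is to mirror the proof of Proposition~\ref{exp-tra}, replacing the exact solution $X$ by the auxiliary process $Z^N$ and the single It\^o computation on $[0,T]$ by a chain of subinterval estimates that respects the splitting structure. Recall that on each $[t_{n-1},t_n]$ the process $Z^N$ is built in two stages: the deterministic phase flow $Z^N(t)=\Phi_{t-t_{n-1}}(Z^N(t_{n-1}))$ solving $dZ^N=F(Z^N)dt$, followed by the linear stochastic step $Z^N(t_n)=\widehat Z(t_n)$, where $\widehat Z$ solves $d\widehat Z=A\widehat Z\,dt+dW^Q$ on $[t_{n-1},t_n]$ with $\widehat Z(t_{n-1})=\Phi_{\delta t}(Z^N(t_{n-1}))$. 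The two stages carry the two dissipation mechanisms \emph{separately}: the nonlinear flow yields the pathwise identity $\frac{d}{dt}\|Z^N\|^2=2\|Z^N\|^2-2\|Z^N\|_{L^4}^4$ (hence control of $\int\|Z^N\|_{L^4}^4$) together with the favourable gradient term $\frac{d}{dt}\|\nabla Z^N\|^2=2\|\nabla Z^N\|^2-6\int (Z^N)^2|\nabla Z^N|^2$, while the linear step supplies the genuine $\|\nabla\cdot\|^2$ and $\|A\cdot\|^2$ dissipation coming from the dissipativity of $A$ and absorbs the additive noise.

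First I would, on each subinterval, apply the exponential integrability Lemma~\ref{lm-exp} to $d\widehat Z=A\widehat Z\,dt+dW^Q$ with the Lyapunov functionals $U(x)=\rho\|x\|^2$ and $U_1(x)=\rho_1\|\nabla x\|^2$, exactly as in Proposition~\ref{exp-prop}; this gives conditional exponential estimates given $\mathcal F_{t_{n-1}}$, with the noise terms absorbed through a discount factor $e^{-\alpha s}$ as soon as $\alpha\ge 2\rho\|Q\|_{\LL^0_2}^2$ (resp. $\alpha_1\ge 2\rho_1\|\nabla Q\|_{\LL^0_2}^2$). Into their right-hand sides I would substitute the deterministic flow bounds $\|\Phi_{\delta t}(z)\|^2\le (1+C\delta t)\|z\|^2-2\int_{t_{n-1}}^{t_n}\|Z^N\|_{L^4}^4\,ds$ and $\|\nabla\Phi_{\delta t}(z)\|^2\le e^{2\delta t}\|\nabla z\|^2$ (cf. the computation in Lemma~\ref{reg-spl}). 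Chaining these one-step conditional bounds, using conditional Jensen to raise them to the discount powers, and taking the discount large enough to dominate both the $\|Q\|_{\LL^0_2}$-contributions and the $O(\delta t)$ flow growth, a discrete supermartingale argument then yields the discrete analogue of Proposition~\ref{exp-prop}: exponential integrability, with a free parameter $\rho$, of $\int_0^T\|Z^N\|_{L^4}^4\,ds$ and of the accumulated gradient dissipation.

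To close the argument as in Proposition~\ref{exp-tra}, I would exploit that in dimension one the phase flow is a sup-norm contraction, $\|\Phi_t(z)\|_{\mathcal E}\le\max(\|z\|_{\mathcal E},1)$, so that $\int_0^T\|Z^N(s)\|_{\mathcal E}^2\,ds\le T+\delta t\sum_{n}\|Z^N(t_n)\|_{\mathcal E}^2$; the Sobolev embedding $\HH^1\hookrightarrow\mathcal E$ (valid only for $d=1$) then reduces matters to the grid-point $\HH^1$ norms, equivalently one applies the Gagliardo--Nirenberg--Sobolev inequality $\|z\|_{\mathcal E}\le C\|\nabla z\|^{1/3}\|z\|_{L^4}^{2/3}$ and splits via Cauchy--Schwarz. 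Choosing the interpolation parameter $\epsilon=\epsilon(c)$ small enough that the required coefficient matches the (small but strictly positive) coefficient produced by the supermartingale gives $\E\big[\exp(\int_0^T c\|Z^N\|_{\mathcal E}^2\,ds)\big]\le C(X_0,T,Q,c)$. As noted in the remark preceding the statement, each subinterval estimate must first be established after truncating the nonlinearity and projecting by spectral Galerkin, applying It\^o's formula to the regularised functionals and passing to the limit by Fatou's lemma.

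The step I expect to be the main obstacle is the reconciliation of the two ``timelines'': the Gagliardo--Nirenberg bound is naturally evaluated along the phase-flow values $Z^N(s)$, whereas the only $\|\nabla\cdot\|^2$ dissipation is generated by the linear sub-flow $\widehat Z$, both occupying the \emph{same} physical interval $[t_{n-1},t_n]$. One must therefore arrange the discount bookkeeping so that the per-step flow growth $e^{2\delta t}$ and the additive-noise contribution are absorbed simultaneously while the gradient dissipation still accumulates with a positive coefficient, and then transfer that control onto the flow values through the sup-norm contraction of $\Phi$ and the embedding $\HH^1\hookrightarrow\mathcal E$. This is precisely where dimension one is essential, and where the corresponding argument for $Z^N$ breaks down when $d=2,3$.
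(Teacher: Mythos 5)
Your proposal follows essentially the same route as the paper's proof: per-subinterval application of Lemma~\ref{lm-exp} to the linear flow $d\widehat Z=A\widehat Z\,dt+dW^Q$ with $U(x)=\rho\|x\|^2$ and $U_1(x)=\rho_1\|\nabla x\|^2$, combined with the pathwise energy identities for the phase flow of $\dot z=z-z^3$ (which donate the $-\|Z^N\|_{L^4}^4$ and $-6\int (Z^N)^2|\nabla Z^N|^2$ dissipation), chained by conditioning on $\FFF_{t_{n-1}}$, and closed by Gagliardo--Nirenberg, Young and Cauchy--Schwarz with $\epsilon_1,\epsilon_2$ tuned to the discounted parameters $\rho,\rho_1$. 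The one detail worth making explicit (which the paper handles via Jensen's inequality) is the passage from the pointwise-in-time bound $\sup_{s}\E[\exp(e^{-\alpha_1 s}\rho_1\|\nabla Z^N(s)\|^2)]<\infty$ to the time-integrated quantity $\E[\exp(\int_0^T\epsilon_1\|\nabla Z^N(s)\|^2ds)]$, but your overall argument is correct.
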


\begin{proof}
In each subinterval $[t_{n-1},t_{n}]$, we define the process $\widehat Z$ as the solution of 
$d\widehat Z=A\widehat Zdt+dW^Q(t)$, with $\widehat Z(t_{n-1})=\Phi_{\delta t}Z^N(t_{n-1})$.
Denote $\mu(x)=Ax, \sigma(x)=Q$,
$U(x)=\rho \|x\|^2$ and $U_1(x)=\rho_1 \|\nabla x\|^2$. 
we get for $\rho,\rho_1>0$, 
\begin{align*}
&\<DU(x),\mu(x)\>+\frac 12\text{tr}[D^2U(x)\sigma(x)\sigma^*(x)]+\frac 12 \|\sigma(x)^*DU\|^2\\
&=2\rho \<x,Ax\>
+\rho \|Q\|_{L^0_2}^2 +2\rho^2\|Qx\|^2\\
&\le -2\rho \|\nabla x\|^2
+\rho \|Q\|_{L^0_2}^2+2\rho^2\|Q\|_{L^0_2}^2\|x\|^2.
\end{align*}
Lemma \ref{lm-exp} yields that for $\alpha \ge 2\rho^2\|Q\|_{L^0_2}^2$,
\begin{align*}
&\E\Big[\exp\Big(e^{-\alpha t_n}\rho\|Z^N(t_n)\|^2\Big)\Big]\le e^{C\delta t}\E\Big[\exp\Big(e^{-\alpha {t_{n-1}}}\rho\|\Phi_{\delta t}Z^N(t_{n-1})\|^2\Big)\Big].
\end{align*}
Since $\Phi_{t-t_{n-1}}Z^N(t_{n-1})$ is the solution of 
$d\widetilde Z=F(\widetilde Z)dt$ with $\widetilde Z(t_{n-1})=Z^N(t_{n-1})$ in $[t_{n-1},t_n]$, similar calculation, together with H\"older and Young inequality, yields
\begin{align*}
&\E\Big[\exp\Big(e^{-\alpha {t_{n-1}}}\rho\|\Phi_{\delta t}Z^N(t_{n-1})\|^2\Big)\Big]\\
&=\E\Big[\exp\Big(e^{-\alpha {t_{n-1}}}\rho\|Z^N(t_{n-1})\|^2-e^{-\alpha {t_{n-1}}}2\rho\int_{t_{n-1}}^{t_n} \|Z^N(s)\|_{L^4}^4ds\\
&\quad+e^{-\alpha{t_{n-1}}}2\rho\int_{t_{n-1}}^{t_n} \|Z^N(s)\|^2ds
\Big)\Big]\\
&\le e^{C\delta t}\E\Big[\exp\Big(e^{\alpha {t_{n-1}}}\rho\|Z^N(t_{n-1})\|^2-e^{-\alpha {t_{n-1}}}\rho\int_{t_{n-1}}^{t_n} \|Z^N(s)\|_{L^4}^4ds\Big)\Big]\\
&\le e^{C\delta t}\E\Big[\exp\Big(e^{-\alpha {t_{n-1}}}\rho\|Z^N(t_{n-1})\|^2\Big)\Big].
\end{align*}
Then repeating the above procedures, 
\begin{align*}
\E\Big[\exp\Big(e^{-\alpha t_n}\rho\|Z^N(t_n)\|^2\Big)\Big]
&\le e^{C\delta t} \E\Big[\exp\Big(e^{-\alpha {t_{n-1}}}\rho\|Z^N(t_{n-1})\|^2\Big)\Big]\\
&\le e^{Ct_n}e^{\rho\|X_0\|^2}.
\end{align*}  
For $t\in [t_{n-1},t_n)$, we similarly have 
\begin{align*}
&\E\Big[\exp\Big(e^{-\alpha {t}}\rho\|Z^N(t)\|^2
+\int_{0}^te^{-\alpha s}\rho\|Z^N(s)\|^4_{L^4}ds \Big) \Big]\\
&\le \E \Bigg[\E\Big[\exp\Big(e^{-\alpha {t}}\rho\|Z^N(t)\|^2
+\int_{t_{n-1}}^te^{-\alpha s}\rho\|Z^N(s)\|^4_{L^4}ds \Big)\Big| \FFF_{t_{n-1}}\Big]\\
&\quad \times \exp\Big(\int_{0}^{t_{n-1}}e^{-\alpha s}\rho\|Z^N(s)\|^4_{L^4}ds \Big)
\Bigg]\\
&\le e^{C\delta t} \E\Big[\exp\Big(e^{-\alpha {t_{n-1}}}\rho\|Z^N(t_{n-1})\|^2+ \int_{0}^{t_{n-1}}e^{-\alpha s}\rho\|Z^N(s)\|^4_{L^4}ds\Big)\Big]\\
&\le e^{Ct_n}e^{\rho\|X_0\|^2}.
\end{align*}
Next, we focus on the exponential integrability in $\HH^1$.  Since  
$d\widehat Z=A\widehat Zdt+dW^Q(t)$ in $[t_{n-1},t_n]$, with $\widehat Z(t_{n-1})=\Phi_{\delta t}Z^N(t_{n-1})$, for $\rho_1>0$, we  have
\begin{align*}
&\<DU_1(x),\mu(x)\>+\frac 12\text{tr}[D^2U_1(x)\sigma(x)\sigma^*(x)]+\frac 12 \|\sigma(x)^*DU_1(x)\|^2\\
&=-2\rho_1 \<Ax,Ax\>
+\rho_1 \|\nabla Q\|_{L^0_2}^2 +2\rho_1^2\|\nabla Q\|_{L^0_2}^2\|\nabla x\|^2,
\end{align*}
which yields that for $\alpha_1\ge {2\rho_1^2\|\nabla Q\|_{L^0_2}^2}$, 
\begin{align*}
&\E\Big[\exp\Big(e^{-\alpha_1 t_n}\rho_1\|\nabla Z^N(t_n)\|^2\Big)\Big]
\le e^{C\delta t}\E\Big[\exp\Big(e^{-\alpha_1 {t_{n-1}}}\rho_1\|\nabla \Phi_{\delta t} Z^N(t_{n-1})\|^2\Big)\Big].
\end{align*}
Then the fact that $\Phi_{t-t_{n-1}} Z^N(t_{n-1})$ is 
the solution of $d\widetilde Z=F(\widetilde Z)dt$ in $[t_{n-1},t_n]$, with $\widetilde Z(t_{n-1})=Z^N(t_{n-1})$,
yields that for $ \alpha_1\ge {2 \widetilde  \rho_1}$, $ \widetilde \rho_1=  e^{2\rho_1^2\|\nabla Q\|_{L^0_2}^2 T}  \rho_1$,
\begin{align*}
&\E\Big[\exp\Big(e^{-\alpha_1 t_{n-1}}  \rho_1\|\nabla \Phi_{\delta t}Z^N(t_{n-1})\|^2
+\int_{t_{n-1}}^{t_n} e^{-\alpha_1 s}\\
&\quad 2  \rho_1\<\nabla Z^N(s), (Z^N(s))^2\nabla Z^N(s) \> ds\Big)\Big]\\
&\le e^{C \delta t }\E\Big[\exp\Big(e^{-\alpha_1 t_{n-1}} \rho_1\|\nabla Z^N(t_{n-1})\|^2\Big)\Big]
\end{align*} 
Repeating the above procedures and taking
$\alpha_1\ge \max(2\rho_1^2\|\nabla Q\|_{L^0_2}^2,2e^{ 2\rho_1^2\|\nabla Q\|_{L^0_2}^2 T}\rho_1)$, we obtain 
\begin{align*}
&\sup_{t\in[0,T]}\E\Big[\exp\Big(e^{-\alpha_1 t}\rho_1\|\nabla Z^N(t)\|^2\Big)\Big]\le Ce^{\rho_1\|\nabla X_0\|^2}.
\end{align*}
Now, we are in position to show the desired result \eqref{exp-aux}.
Gagliardo--Nirenberg--Sobolev inequality
$\|Z^N\|_{\mathcal E} \le C_1\|\nabla Z^N\|^{\frac 13}\|Z^N\|^{\frac 23}_{L^4}$, together with H\"older and Young inequalities, implies that
\begin{align*}
&\E\Big[\exp\Big(\int_0^Tc\|Z^N(s)\|_{\mathcal E}^2ds\Big)\Big]\\
&\le
\E\Big[\exp\Big(\int_0^T\frac 12\epsilon_1\|\nabla Z^N(s)\|^2+
\frac 12\epsilon_2 \| Z^N(s)\|^4_{L^4}+C(\epsilon_1,\epsilon_2,c))ds\Big)\Big]\\
&\le C(T,\epsilon_1,\epsilon_2,c)
\sqrt{\E\Big[\exp\Big(\int_0^T\epsilon_1\|\nabla Z^N(s)\|^2ds\Big)\Big]}
\sqrt{\E\Big[\exp\Big(\int_0^T\epsilon_2
 \| Z^N(s)\|^4_{L^4}ds\Big)\Big]}.
\end{align*}
Choosing $\epsilon_2\le e^{-\alpha T}\rho$,
we have 
\begin{align*}
\sqrt{\E\Big[\exp\Big(\int_0^T\epsilon_2
 \| Z^N(s)\|^4_{L^4}ds\Big)\Big]}
 \le e^{CT}e^{\frac \rho 2\|X_0\|^2}.
\end{align*}
Taking 
$\epsilon_1 \le \frac {e^{-\alpha_1 T}\rho_1}T$,
together with Jensen inequality, yields that
\begin{align*}
\sqrt{\E\Big[\exp\Big(\int_0^T\epsilon_1\|\nabla Z^N(s)\|^2ds\Big)\Big]}
&\le 
\sup_{s\in[0,T]}\sqrt{\E\Big[\exp\Big(T\epsilon_1\|\nabla Z^N(s)\|^2\Big)\Big]}\\
&\le e^{CT}e^{\frac \rho 2\|\nabla X_0\|^2}.
\end{align*}
The above two estimations leads the desired result.

\end{proof}

\subsubsection{Strong convergence order 1 of the splitting scheme}
After establish the a priori estimates and  the exponential integrability of 
both the exact and numerical solutions, 
we are in position to give the other main result on
the strong convergence rate of the splitting scheme.

\begin{proof}[Proof of Theorem~\ref{main}]
The mild representation  of $X$ \eqref{exa} and $X^N$ \eqref{aux} yields that

\begin{align*}
\|X(t_{n})-X^N(t_{n})\|&\le 
\Big\|\sum_{j=0}^{n-1}\int_{t_j}^{t_{j+1}} 
S(t_n-s)(F(X(s))-F(Z^N(s))ds\Big\|\\
&\quad+
\Big\|\sum_{j=0}^{n-1}\int_{t_j}^{t_{j+1}} 
(S(t_n-s)-S(t_n-t_j))F(Z^N(s))ds\Big\|
&:=II_1+II_2.
\end{align*}

By the smoothing properties of $S(t)$, $\HH^1$ is an algebra and $|F(z)|\le C(1+|z|^3)$, for $0<\eta<1$, $II_2$ is treat as follows:
\begin{align*}
II_2
&= 
\Big\| \int_{0}^{t_n} (-A)^{\eta}S(t_n-s)
(-A)^{-\eta}(I-S(s-\lfloor s \rfloor_{\delta t}))F(Z^N(s))ds\Big\|
\\
&\le 
C\delta t^{\frac 12+\eta} \Big(1+\sup_{s\in[0,T]}\Big\|Z^N(s)\Big\|_{\HH^1}^3\Big)\int_{0}^{t_n} \Big\| (-A)^{\eta}S(t_n-s)\Big\| ds\\
&\le C\delta t^{\frac 12+\eta} \Big(1+\sup_{s\in[0,T]}\Big\|Z^N(s)\Big\|_{\HH^1}^3\Big).
\end{align*}
For convenience, we introduce the mapping G such that
$F(z_1)-F(z_2)=G(z_1,z_2)(z_1-z_2)$, $z_1,z_2\in R$,
where $G(z_1,z_2)=-(z_1^2+z_2^2+z_1z_2)+1$.
$II_1$ is decomposed as 
\begin{align*}
II_1
&\le
 \Big\|\sum_{j=0}^{n-1}\int_{t_j}^{t_{j+1}} 
S(t_n-s)G(X(s),Z^N(s))(X(t_j)-Z^N(t_j))ds\Big\|\\
&\quad
+
\Big\| \sum_{j=0}^{n-1}\int_{t_j}^{t_{j+1}} 
S(t_n-s)G(X(s),Z^N(s))(X(s)-X(t_j))ds\Big\|\\
&\quad
+\Big\| \sum_{j=0}^{n-1}\int_{t_j}^{t_{j+1}} 
S(t_n-s)G(X(s),Z^N(s))(Z^N(s)-Z^N(t_j))ds\Big\|\\
&:=II_{11}+II_{12}+II_{13}.
\end{align*}
Direct calculations, together with Sobolev embedding  and  Gagliardo--Nirenberg inequality, yields that
\begin{align*}
II_{11}\le 
C\sum_{j=0}^{n-1}\int_{t_j}^{t_{j+1}} 
\Big(\|X(s)\|_{\mathcal E}^2+\|Z^N(s)\|_{\mathcal E}^2+1\Big)ds\|X(t_j)-Z^N(t_j)\|.
\end{align*}
and
\begin{align*}
II_{13}&\le
2C\sum_{j=0}^{n-1}\int_{t_j}^{t_{j+1}}
\Big(1+\|X(s)\|_{L^6}^2+\|Z^N(s)\|_{L^6}^2\Big)\Big\| Z^N(s)-Z^N(t_j)\Big\|_{L^6} ds\\
&\le 2C\sum_{j=0}^{n-1}\int_{t_j}^{t_{j+1}}
\sup_{s\in[0,t_n]}\Big(1+\|X(s)\|_{L^6}^2+\|Z^N(s)\|_{L^6}^2\Big)\Big\| \int_{t_j}^s F(Z^N(r))dr\Big\|_{L^6} ds\\
&\le 2C\delta t
\sup_{s\in[0,t_n]}\Big(1+\|X(s)\|_{L^6}^4+\|Z^N(s)\|_{L^6}^4+\|Z^N(s)\|_{L^{18}}^6\Big)\\
&\le  2C\delta t
\sup_{s\in[0,t_n]}\Big(1+\|X(s)\|_{\HH^1}^4+\|Z^N(s)\|_{\HH^1}^6\Big).
\end{align*}
For $II_{12}$, we have
\begin{align*}
II_{12}
&\le \Big\|\sum_{j=0}^{n-1}\int_{t_j}^{t_{j+1}} 
S(t_n-s)G(X(t_j),Z^N(t_j))(X(s)-X(t_j))ds\Big\|\\
&\quad+ \Big\|\sum_{j=0}^{n-1}\int_{t_j}^{t_{j+1}} 
S(t_n-s)\Big(G(X(s),Z^N(s))-G(X(t_j),Z^N(s))\Big)(X(s)-X(t_j))ds\Big\|\\
&\quad+ \Big\|\sum_{j=0}^{n-1}\int_{t_j}^{t_{j+1}} 
S(t_n-s)\Big(G(X(t_j),Z^N(s))-G(X(t_j),Z^N(t_j))\Big)(X(s)-X(t_j))ds\Big\|\\
&:=II_{121}+II_{122}+II_{123}.
\end{align*}
Using the mild form of $X(s)$ \eqref{exa} and Sobolev embedding $\HH^1  \hookrightarrow\mathcal E$, we have 
\begin{align*}
III_{121}
&\le \sum_{j=0}^{n-1}\int_{t_j}^{t_{j+1}} 
\Big\|S(t_n-s)G(X(t_j),Z^N(t_j))\int_{t_j}^s S(s-r) F(X(r))dr\Big\|ds\\
&\quad
+\Big\|\sum_{j=0}^{n-1}\int_{t_j}^{t_{j+1}} 
S(t_n-s)G(X(t_j),Z^N(t_j))\int_{t_j}^s S(s-r) dW^Q(r)ds\Big\|\\
&\quad+C\sum_{j=0}^{n-1}\delta t^2(\|X(t_j)\|^2_{\mathcal E}+\|Z^N(t_j)\|^2_{\mathcal E})\|(-A)X(t_j)\|
\\
&\le 
C\delta t\sup_{s\in[0,t_n]}\Big(1+\|X(s)\|_{\HH^1}^5+\|Z^N(s)\|_{\HH^1}^5\Big)\\
&\quad+C\sum_{j=0}^{n-1}\delta t^2(\|X(t_j)\|^2_{\HH^1}+\|Z^N(t_j)\|^2_{\HH^1})\|(-A)X(t_j)\|\\
&\quad
+\Big\|\sum_{j=0}^{n-1}\int_{t_j}^{t_{j+1}} 
S(t_n-s)G(X(t_j),Z^N(t_j))\int_{t_j}^s S(s-r) dW^Q(r)ds\Big\|.
\end{align*}
For the last term, taking expectation, together with 
the independence of increments of Wiener process, the adaptivity of $X$, Fubini theorem and Burkholder-Davis-Gundy inequality,
yields that for $p\ge 2$, 
\begin{align*}
&\E\Big[\Big\|\sum_{j=0}^{n-1}\int_{t_j}^{t_{j+1}} 
S(t_n-s)G(X(t_j),Z^N(t_j))\int_{t_j}^s S(s-r) dW^Q(r)ds\Big\|^p\Big]\\
&=\E\Big[\Big\|\sum_{j=0}^{n-1}\int_{t_j}^{t_{j+1}}\int_{r}^{t_{j+1}} S(t_n-s)G(X(t_j),Z^N(t_j)) S(s-r) ds dW^Q(r)\Big\|^p\Big]\\
&\le 
C(p)\E\Big[\Big(\sum_{j=0}^{n-1}\int_{t_j}^{t_{j+1}}
\Big\| 
\int_{r}^{t_{j+1}} S(t_n-s)G(X(t_j),Z^N(t_j))S(s-r) ds Q\Big\|_{\LL^0_2}^2dr\Big)^{\frac p2}\Big]\\
&\le C(p)\delta t^{p}
\Big(\sum_{j=0}^{n-1}\int_{t_j}^{t_{j+1}}
\Big(\|X(t_j)\|^2_{L^p(\Omega;L^6)}+\|Z^N(t_j)\|^2_{L^p(\Omega;L^6)}+1\Big)\sum_{k\in\N^+}\Big\|Qe_k\Big\|_{\HH^1}^2ds\Big)^{\frac p2}\\
&\le C(T,Q,X_0,p)\delta t^{p}.
\end{align*}
The definition of $G$ implies that $G$ is symmetric  and  $|G(z_1,z_2)-G(z_1,z_3)|\le |z_1||z_2-z_3|+|z_2-z_3||z_2+z_3|$. Based on this property, we estimate  $III_{122}$ and $III_{123}$ as 
\begin{align*}
&III_{122}+III_{123}\\
&\le 2C\sum_{j=0}^{n-1}\int_{t_j}^{t_{j+1}}
\| X(s)-X(t_j)\|_{L^6}^2(\|X(s)\|_{L^6}+\|X(t_j)\|_{L^6}+\|Z^N(s)\|_{L^6})ds\\
&\quad+ 2C\sum_{j=0}^{n-1}\int_{t_j}^{t_{j+1}}
\| X(s)-X(t_j)\|_{L^6}\|Z^N(s)-Z^N(t_j)\|_{L^6}(\|Z^N(s)\|_{L^6}+\|Z^N(t_j)\|_{L^6}+\|X(t_j)\|_{L^6})ds.
\end{align*}
The continuity of $X$, the right continuity of $Z^N$ and 
Sobolev embedding theorem 
lead that for $s\in[t_{j},t_{j+1})$, $\eta<1$,
\begin{align*}
&\|X(s)-X(t_j)\|_{L^6}\\
&\le 
\|(S(s)-S(t_{j}))X(0)\|_{L^6}
+\|\int_{0}^sS(s-r)F(X(r))dr-\int_0^{t_j}S(t_j-r)F(X(r))dr \|_{L^6}\\
&\quad+\|\int_{0}^sS(s-r)dW^Q(r)-\int_0^{t_j}S(t_j-r)dW^Q(r)\|_{L^6}
\\
&\le  C\delta t^{\frac 12} \|X_0\|_{\HH^2}
+ \|\int_{0}^{t_j}(S(s-r)-S(t_j-r))F(X(r))dr\|_{L^6} 
+ \|\int_{t_j}^{s}S(s-r)F(X(r))dr\|_{L^6} \\
&\quad+\|\int_{0}^{t_j}(S(s-r)-S(t_j-r))dW^Q(r)\|_{L^6} 
+ \|\int_{t_j}^{s}S(s-r)dW^Q(r)\|_{L^6} \\
&\le C\delta t^{\min(\frac 1 2,\eta)}\sup_{r\in [0,T]}\Big(\|X_0\|_{\HH^2}+\|X(r)\|_{\HH^1}+\|X(r)\|_{\HH^1}^3\Big)+\Big\|\int_{t_j}^sS(s-r)dW^Q(r)\Big\|_{L^6}\\
&\quad+\|\int_{0}^{t_j}(S(s-r)-S(t_j-r))dW^Q(r)\|_{L^6},
\end{align*}
where the two stochastic convolution terms can be bounded by Sobolev embedding 
$L^6 \hookrightarrow \HH^1$ and similar estimations for $I_3$ in Corollary \ref{gam}, 
and
\begin{align*}
&\|Z^N(s)-Z^N(t_j)\|_{L^6}\\
&\le
\|\int_{t_{n-1}}^sF(Z^N(r))dr \|_{L^6}
\le C\delta t \sup_{r\in [0,T]}\Big(1+\|Z^N(r)\|_{\HH^1}+\|Z^N(r)\|_{\HH^1}^3\Big).
\end{align*}
The above estimations, together with Young and H\"older inequality, implies that 
\begin{align*}
&III_{122}+III_{123}\\
&\le 2C\delta t^{\min(1, 2\eta)} \sup_{s\in[0,t_n]}
\Big(1+\|X_0\|^4_{\HH^2}+\|X(s)\|_{\HH^1}^{12}+\|Z^N(s)\|_{\HH^1}^{12}\Big)\\
&\quad+ 2C\sup_{s\in[0,t_n]}\Big(\|Z^N(s)\|_{\HH^1}+\|X(s)\|_{\HH^1}\Big)\sum_{j=0}^{n-1}\int_{t_j}^{t_{j+1}}
\Bigg(\Big\|\int_{t_j}^sS(s-r)dW^Q(r)\Big\|_{\HH^1}^2\\
&\quad+\|\int_{0}^{t_j}(S(s-r)-S(t_j-r))dW^Q(r)\|_{\HH^1}^2
\Bigg)ds.
\end{align*}
 Since $\|X(t_{n})-Z^N(t_{n})\|\le II_{11}+II_{2}+II_{13}+II_{121}+II_{122}
+II_{123},$
the discrete Gronwall's inequality in \cite[Lemma 2.6]{CH17} yields that 
\begin{align*}
\|X(t_{n})-Z^N(t_{n})\|
&\le 
C\exp\Big(2\sum_{j=0}^{n-1}\int_{t_j}^{t_{j+1}} 
\|X(s)\|_{\mathcal E}^2+\|Z^N(s)\|_{\mathcal E}^2ds\Big)\\
&\quad\times\Big(II_{2}+II_{13}+II_{121}+II_{122}
+II_{123}
\Big).
\end{align*}

Then taking expectation, together with  H\"older inequality, the a priori estimates in Lemma \ref{reg-tra},  Propositions  \ref{hig-reg}  and \ref{reg-spl2}, the continuity of stochastic convolution in the proof of Corollary \ref{gam} and exponential integrability of $X$ and $Z^N$ in Propositions \ref{exp-tra} and  \ref{reg-spl2}, we obtain for $p\ge 1$, $\frac 12<\eta<1$,
\begin{align*}
&\sup_{n\le N}\E\Big[\|X(t_n)-X^N(t_n)\|^p\Big]\\
&\le 
C(p) \sqrt[\frac 14]{\E\Big[\exp(4p\int_{0}^T\|X(s)\|_{\mathcal E}^2)\Big]}\sqrt[\frac 14]{\E\Big[\exp(4p\int_{0}^T\|Z^N(s)\|_{\mathcal E}^2)\Big]}\\
&\quad
\Big(\sqrt{\E[II_{2}^{2p}]}+\sqrt{\E[II_{13}^{2p}]}+\sqrt{\E[II_{121}^{2p}]}+\sqrt{\E[(II_{122}+II_{123})^{2p}]}\Big)\\
&\le C\delta t^{(\frac 12+\eta)p} \sqrt{\E\Big[1+\sup_{s\in[0,T]}\Big\|Z^N(s)\Big\|_{\HH^1}^{6p}\Big]}
+C\delta t^p
\sqrt{\E\Big[\sup_{s\in[0,T]}\Big(1+\|X(s)\|_{\HH^1}^{8p}+\|Z^N(s)\|_{\HH^1}^{12p}\Big)\Big]}\\
&\quad+ C\delta t^p \sqrt{\E\Big[\sup_{s\in[0,T]}\Big(1+\|X(s)\|_{\HH^1}^{10p}+\|Z^N(s)\|_{\HH^1}^{10p}\Big)\Big]}\\
&\quad+C\delta t^p\sum_{j=0}^{N-1}\delta t\sqrt{\E\Big[\|(-A)X(t_j)\|^{2p}(\|X(t_j)\|_{\HH^1}^{4p}+\|Z^N(t_j)\|_{\HH^1}^{4p})\Big]}\\
&\quad+C\delta t^{\min(1,2\eta )p}
\sqrt{\E\Big[1+\|X_0\|_{\HH^2}^{8p}+\sup_{s\in [0,T]}\Big(\|X(s)\|_{\HH^1}^{24p}+\|Z^N(s)\|_{\HH^1}^{24p}\Big)\Big]}\\
&\le C(T,p,Q,X_0)\delta t^{p}\Big(1+\sum_{j=0}^{N-1}\delta t\sqrt[\frac 14]{\E\Big[\|(-A)X(t_j)\|^{4p}\Big]}
\sqrt[\frac 12]{\E\Big[\|X(t_j)\|_{\HH^1}^{8p}+\|Z^N(t_j)\|_{\HH^1}^{8p}\Big]}\Big)\\
&\le C(T,p,Q,X_0)\delta t^{p},
\end{align*}
which completes the proof.

\end{proof}
As a direct consequence of the Theorem \ref{main} above, we have the following stronger
error estimation.
\begin{cor}
Assume that $d= 1$, $\|(-A)^{\frac 12}Q\|_{\LL^0_2}<\infty$ and 
$X_0\in \HH^2$.
Then for any $p\ge 1$ and $0<\eta<1$,
\begin{align*}
\Big\|\sup_{n\le N} \|X(t_n)-X^N(t_n)\|\Big\|_{L^p(\Omega)}
\le C\delta t^{\eta }.
\end{align*}
\end{cor}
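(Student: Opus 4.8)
The plan is to upgrade the pointwise-in-time estimate of Theorem~\ref{main}, in which the supremum over grid points sits outside the expectation, to an estimate in which the supremum is taken inside the $L^p(\Omega)$-norm, by exploiting the finiteness of the number $N+1$ of grid points together with the availability of arbitrarily high moments. Write $e_n:=X(t_n)-X^N(t_n)$. Theorem~\ref{main} provides, for every $p'\ge 1$, a constant $C=C(T,p',Q,X_0)$ such that $\sup_{n\le N}\E[\|e_n\|^{p'}]\le C\delta t^{p'}$.

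First I would bound the supremum over the finite grid by the corresponding sum: since $\sup_{n\le N}\|e_n\|^{p'}\le \sum_{n=0}^{N}\|e_n\|^{p'}$, taking expectations and inserting the moment bound above gives
\begin{align*}
\E\Big[\sup_{n\le N}\|e_n\|^{p'}\Big]
\le \sum_{n=0}^{N}\E\big[\|e_n\|^{p'}\big]
\le (N+1)\,C\delta t^{p'}.
\end{align*}
Using $N+1\le T\delta t^{-1}+1\le C\delta t^{-1}$ for $\delta t\in(0,T]$, this yields $\E[\sup_{n\le N}\|e_n\|^{p'}]\le C\delta t^{p'-1}$, and after taking the $p'$-th root,
\begin{align*}
\Big\|\sup_{n\le N}\|e_n\|\Big\|_{L^{p'}(\Omega)}\le C\,\delta t^{1-\frac{1}{p'}}.
\end{align*}

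Finally, given the prescribed $p\ge 1$ and any $\eta<1$, I would fix an auxiliary integer $p'\ge\max\big(p,\tfrac{1}{1-\eta}\big)$, so that $1-\tfrac{1}{p'}\ge\eta$. Since $(\Omega,\FFF,\PP)$ is a probability space, the $L^p$-norm is dominated by the $L^{p'}$-norm, and since $\delta t\le 1$ one has $\delta t^{1-1/p'}\le\delta t^{\eta}$; combining these with the previous display gives
\begin{align*}
\Big\|\sup_{n\le N}\|e_n\|\Big\|_{L^{p}(\Omega)}
\le \Big\|\sup_{n\le N}\|e_n\|\Big\|_{L^{p'}(\Omega)}
\le C\,\delta t^{1-\frac{1}{p'}}\le C\,\delta t^{\eta},
\end{align*}
which is the claimed bound.

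This argument involves no genuine obstacle: it is a routine union-type bound over the finitely many grid points, made possible by the fact that Theorem~\ref{main} holds for all moment orders $p'$. The only point worth emphasizing is that one cannot reach the endpoint $\eta=1$ in this way, because the factor $N+1\sim\delta t^{-1}$ forces the loss $\delta t^{-1/p'}$, which can be made negligible only by letting the auxiliary moment order $p'$ tend to infinity. This is precisely why the uniform (supremum-inside) estimate degrades from strong order $1$ to order $\eta$ for every $\eta<1$.
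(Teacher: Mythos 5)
Your proof is correct and follows essentially the same route as the paper: both bound the supremum over the $N+1$ grid points by the corresponding sum, invoke Theorem~\ref{main} with a large auxiliary moment order $p'$ to absorb the resulting factor $N+1\sim\delta t^{-1}$ into a loss of $\delta t^{-1/p'}$, and then choose $p'\ge\max(p,\tfrac{1}{1-\eta})$ together with the embedding $L^{p}(\Omega)\subset L^{p'}(\Omega)$. Your write-up is in fact somewhat more careful than the paper's (which conflates $q$ and $q'$ in its display), and your closing remark correctly identifies why the endpoint $\eta=1$ is out of reach by this argument.
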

\begin{proof}
Since for any $q' \ge 1$, based on Theorem \ref{main}, we have 
\begin{align*}
\E\Big[\sup_{n\le N} \Big\|X(t_n)-X^N(t_n)\Big\|^q\Big]
&\le \sum_{n\le N}\E\Big[\Big\|X(t_n)-X^N(t_n)\Big\|^q\Big]\le C\delta t^{q-1}.
\end{align*}
We completes the  proof by taking  $1-\frac 1q'\ge \eta$ and $q'\ge p$.
\end{proof}

\bibliographystyle{plain}
\bibliography{bib}

\begin{thebibliography}{10}

\bibitem{ACLW16}
R.~Anton, D.~Cohen, S.~Larsson, and X.~Wang.
\newblock Full discretization of semilinear stochastic wave equations driven by
  multiplicative noise.
\newblock {\em SIAM J. Numer. Anal.}, 54(2):1093--1119, 2016.

\bibitem{BGJK17}
S.~Becker, B.~Gess, A.~Jentzen, and P.~E. Kloeden.
\newblock Strong convergence rates for nonlinearity-truncated {E}uler-type
  approximations of stochastic {G}inzburg--{Landau} equations.
\newblock {\em arXiv:1711.02423}, 2017.

\bibitem{BJ16}
S.~Becker and A.~Jentzen.
\newblock Strong convergence rates for nonlinearity-truncated {E}uler-type
  approximations of stochastic {G}inzburg--{Landau} equations.
\newblock {\em arXiv:1601.05756}, 2016.

\bibitem{BG18}
C.~E. Br\'ehier and L.~Gouden\`ege.
\newblock Analysis of some splitting schemes for the stochastic {A}llen--{C}ahn
  equation.
\newblock {\em arXiv:1801.06455}, 2018.

\bibitem{Brz97}
Z.~Brze\'zniak.
\newblock On stochastic convolution in {B}anach spaces and applications.
\newblock {\em Stochastics Stochastics Rep.}, 61(3-4):245--295, 1997.

\bibitem{CHJ13}
S.~Cox, M.~Hutzenthaler, and A.~Jentzen.
\newblock Local lipschitz continuity in the initial value and strong
  completeness for nonlinear stochastic differential equations.
\newblock {\em arXiv:1309.5595}, 2013.

\bibitem{CH17}
J.~Cui and J.~Hong.
\newblock Analysis of a splitting scheme for damped stochastic nonlinear
  {S}chr\"odinger equation with multiplicative noise.
\newblock {\em arXiv:1711.00516}, 2017.

\bibitem{CHL16b}
J.~Cui, J.~Hong, and Z.~Liu.
\newblock Strong convergence rate of finite difference approximations for
  stochastic cubic {S}chr\"odinger equations.
\newblock {\em J. Differential Equations}, 263(7):3687--3713, 2017.

\bibitem{CHLZ17}
J.~Cui, J.~Hong, Z.~Liu, and W.~Zhou.
\newblock Strong convergence rate of splitting schemes for stochastic nonlinear
  schr\"odinger equations.
\newblock {\em arXiv:1701.05680}, 2017.

\bibitem{DZ14}
G.~Da~Prato and J.~Zabczyk.
\newblock {\em Stochastic equations in infinite dimensions}, volume 152 of {\em
  Encyclopedia of Mathematics and its Applications}.
\newblock Cambridge University Press, Cambridge, second edition, 2014.

\bibitem{Dor12}
P.~D{\"o}rsek.
\newblock Semigroup splitting and cubature approximations for the stochastic
  {N}avier-{S}tokes equations.
\newblock {\em SIAM J. Numer. Anal.}, 50(2):729--746, 2012.

\bibitem{FLZ17}
X.~Feng, Y.~Li, and Y.~Zhang.
\newblock Finite element methods for the stochastic {A}llen-{C}ahn equation
  with gradient-type multiplicative noise.
\newblock {\em SIAM J. Numer. Anal.}, 55(1):194--216, 2017.

\bibitem{Fun95}
T.~Funaki.
\newblock The scaling limit for a stochastic {PDE} and the separation of
  phases.
\newblock {\em Probab. Theory Related Fields}, 102(2):221--288, 1995.

\bibitem{Gyo98}
I.~Gy{\"o}ngy.
\newblock Lattice approximations for stochastic quasi-linear parabolic partial
  differential equations driven by space-time white noise. {I}.
\newblock {\em Potential Anal.}, 9(1):1--25, 1998.

\bibitem{GK03}
I.~Gy\"ongy and N.~Krylov.
\newblock On the splitting-up method and stochastic partial differential
  equations.
\newblock {\em Ann. Probab.}, 31(2):564--591, 2003.

\bibitem{HJW13}
M.~Hutzenthaler, A.~Jentzen, and X.~Wang.
\newblock Exponential integrability properties of numerical approximation
  processes for nonlinear stochastic differential equations.
\newblock {\em Math. Comp}, 87(311):1353--1413, 2018.

\bibitem{JKW11}
A.~Jentzen, P.~Kloeden, and G.~Winkel.
\newblock Efficient simulation of nonlinear parabolic {SPDE}s with additive
  noise.
\newblock {\em Ann. Appl. Probab.}, 21(3):908--950, 2011.

\bibitem{JP16}
A.~Jentzen and P.~Pusnik.
\newblock Exponential moments for numerical approximations of stochastic
  partial differential equations.
\newblock {\em arXiv:1609.07031}, 2016.

\bibitem{KORV07}
R.~Kohn, F.~Otto, M.~G. Reznikoff, and E.~Vanden-Eijnden.
\newblock Action minimization and sharp-interface limits for the stochastic
  {A}llen-{C}ahn equation.
\newblock {\em Comm. Pure Appl. Math.}, 60(3):393--438, 2007.

\bibitem{KLL18}
M.~Kov\'acs, S.~Larsson, and F.~Lindgren.
\newblock On the discretisation in time {E}uler approximation of the stochastic
  {A}llen--{C}ahn equation.
\newblock {\em Math. Nachr.}, 00:1--30, 2018.

\bibitem{Kru14a}
R.~Kruse.
\newblock {\em Strong and weak approximation of semilinear stochastic evolution
  equations}, volume 2093 of {\em Lecture Notes in Mathematics}.
\newblock Springer, Cham, 2014.

\bibitem{LQ18}
Z.~Liu and Z.~Qiao.
\newblock Strong approximation of stochastic allen-cahn equation with white
  noise.
\newblock {\em arXiv:1801.09348}, 2018.

\bibitem{MB02}
A.~J. Majda and A.~L. Bertozzi.
\newblock {\em Vorticity and incompressible flow}, volume~27 of {\em Cambridge
  Texts in Applied Mathematics}.
\newblock Cambridge University Press, Cambridge, 2002.

\bibitem{Nir59}
L.~Nirenberg.
\newblock On elliptic partial differential equations.
\newblock {\em Ann. Scuola Norm. Sup. Pisa (3)}, 13:115--162, 1959.

\bibitem{Pri02}
J.~Printems.
\newblock On the discretization in time of parabolic stochastic partial
  differential equations.
\newblock {\em M2AN Math. Model. Numer. Anal.}, 35(6):1139--1154, 2001.

\bibitem{VVW08}
J.~van Neerven, M.~C. Veraar, and L.~Weis.
\newblock Stochastic evolution equations in {UMD} {B}anach spaces.
\newblock {\em J. Funct. Anal.}, 255(4):940--993, 2008.

\bibitem{VW12}
E.~Vanden-Eijnden and J.~Weare.
\newblock Rare event simulation of small noise diffusions.
\newblock {\em Comm. Pure Appl. Math.}, 65(12):1770--1803, 2012.

\end{thebibliography}

\end{document}